\newenvironment{breakablealgorithm}
  {
   \begin{center}
     \refstepcounter{algorithm}
     \hrule height.8pt depth0pt \kern2pt
     \renewcommand{\caption}[2][\relax]{
       {\raggedright\textbf{\ALG@name~\thealgorithm} ##2\par}%
       \ifx\relax##1\relax 
         \addcontentsline{loa}{algorithm}{\protect\numberline{\thealgorithm}##2}%
       \else 
         \addcontentsline{loa}{algorithm}{\protect\numberline{\thealgorithm}##1}%
       \fi
       \kern2pt\hrule\kern2pt
     }
  }{
     \kern2pt\hrule\relax
   \end{center}
  }
\numberwithin{equation}{section}
\DeclareMathOperator*{\argmin}{argmin}
\DeclareMathOperator*{\sign}{sign}
\def \bbR {\mathbb R}
\def \bbN {\mathbb N}
\def \bd {\bm{d}}
\def \bp {\bm{p}}
\def \bq {\bm{q}}
\def \bx {\bm{x}}
\def \by {\bm{y}}
\def \bz {\bm{z}}
\def \bw {\bm{w}}
\def \bu {\bm{u}}
\def \bv {\bm{v}}
\def \br {\bm{r}}
\def \bR {\bm{R}}
\def \bI {\bm{I}}
\def \bA {\bm{A}}
\def \bB {\bm{B}}
\def \bC {\bm{C}}
\def \bQ {\bm{Q}}
\def \bR {\bm{R}}
\def \bU {\bm{U}}
\def \bV {\bm{V}}
\def \bbRn {\bbR^{n}}
\def \bbRN {\bbR^{N}}
\def \tbQ {\tilde{\bQ}}
\def \prox {\mathrm{prox}}
\def \dist {\mathrm{dist}}
\def \dom {\mathrm{dom}\hspace{2pt}}
\newcommand\leqs{\leqslant}
\newcommand\geqs{\geqslant}
\begin{document}

\title{Globally optimal solutions to a class of fractional optimization problems based on proximal gradient algorithm
}

\titlerunning{Globally Optimal Solutions to Fractional Optimization Problems}        

\author{Yizun Lin     \and
        Jian-Feng Cai \and
        Zhao-Rong Lai \and
        Cheng Li
}


\institute{Yizun Lin\at
              Department of Mathematics, Jinan University, Guangzhou 510632, China.\\
              \email{linyizun@jnu.edu.cn}           
           \and
           Jian-Feng Cai \at
              Department of Mathematics, The Hong Kong University of Science and Technology, Clear Water Bay, Kowloon, Hong Kong SAR, China\\
              \email{jfcai@ust.hk}
           \and
           Zhao-Rong Lai \at
              Corresponding author. Guangdong Key Laboratory of Data Security and Privacy Preserving, College of Cyber Security, Jinan University, Guangzhou 510632, China.\\
              \email{laizhr@jnu.edu.cn}
           \and
           Cheng Li \at
              Department of Mathematics, Jinan University, Guangzhou 510632, China.\\
              \email{licheng@stu2020.jnu.edu.cn}
}


\maketitle

\vspace{-5em}
\begin{abstract}
This paper investigates a category of constrained fractional optimization problems that emerge in various practical applications. The objective function for this category is characterized by the ratio of a numerator and denominator, both being convex, semi-algebraic, Lipschitz continuous, and differentiable with Lipschitz continuous gradients over the constraint sets. The constrained sets associated with these problems are closed, convex, and semi-algebraic. We propose an efficient algorithm that is inspired by the proximal gradient method, and we provide a thorough convergence analysis. Our algorithm offers several benefits compared to existing methods. It requires only a single proximal gradient operation per iteration, thus avoiding the complicated inner-loop concave maximization usually required. Additionally, our method converges to a critical point without the typical need for a nonnegative numerator, and this critical point becomes a globally optimal solution with an appropriate condition. Our approach is adaptable to unbounded constraint sets as well. Therefore, our approach is viable for many more practical models. Numerical experiments show that our method not only reliably reaches ground-truth solutions in some model problems but also outperforms several existing methods in maximizing the Sharpe ratio with real-world financial data.
\keywords{Fractional optimization \and global optimum \and proximal gradient algorithm \and Sharpe ratio}
\subclass{90C26 \and 90C32 \and 65K05}
\end{abstract}

\section{Introduction}\label{sec:intro}
Single-ratio fractional optimization aims to minimize the ratio of two functions subject to a given constraint set. It can be formulated as
\begin{equation}\label{mod:fracfg1}
\min_{\bx\in\Omega}\left\{\frac{f(\bx)}{g(\bx)}\right\},
\end{equation}
where $f:\bbRn\to\bbR$ and $g:\bbRn\to\bbR$ are two functions, and $\Omega\subset\bbRn$ is the constraint set within which this optimization takes place. Interested readers are referred to \cite{FPreview1,FPreview3,FPreview4} for reviews of this field. Fractional optimization has applications in various areas such as operations research \cite{FPapp7,SRlincom}, mathematical finance \cite{FPapp2,FPapp1}, signal processing \cite{FPapp3,FPapp6}, wireless communications \cite{FPapp4,FPapp5}, and beyond.

An interesting special case of \eqref{mod:fracfg1} is the concave-convex fractional problem, expressed as
\begin{equation}\label{eqn:fracopt-ccfp}
\max_{\bx\in\Omega}\left\{\frac{-f(\bx)}{g(\bx)}\right\},
\end{equation}
where the constraint set $\Omega$ is convex, closed and bounded; function $-f$ is continuous and concave on $\Omega$; and the function $g$ is continuous, convex, and strictly positive over $\Omega$. This problem can primarily be solved by Dinkelbach's method \cite{frdindelbach,frdindelbach2,FPreview2}, the details of which are presented in Section \ref{sec:dinkelbachapproach}. This method transforms the original fractional optimization model into a parametric subtractive form and then alternates between solving the subtractive concave maximization problem and updating the parameter in the subtractive model. While Dinkelbach's method is intuitive and efficient, it has at least three shortcomings that could be significantly improved. First, it requires a starting point $\bx^{0}\in\Omega$ such that $f(\bx^{0})\leqs0$. If this condition is not met, the subtractive maximization model may not remain concave, making it difficult to find an optimal solution numerically. Second, even with concavity retained, many inner iterations may be needed to resolve this subproblem. Third, the boundedness requirement of $\Omega$ limits the applicability of the method, as it excludes certain practical scenarios.

Other methods exist for other special cases of problem (\ref{mod:fracfg1}). For instance, Pang \cite{SRlincom} proposes optimizing the Sharpe ratio (SR) with $f(\bx)=-\bm{\mu}^\top \bx$ and $g(\bx)=\sqrt{\bx^\top\bV\bx}$ on a closed and bounded domain $\Omega$, where $\bm{\mu}\in\bbR^{n}$ and the positive definite matrix $\bm{V}\in\bbR^{n\times n}$ represent the expected return vector and its covariance matrix for $n$ assets in a financial market, respectively. While this problem meets the assumptions for Dinkelbach's method, the corresponding subtractive concave maximization problem lacks a closed-form solution. Instead of using Dinkelbach's method, Pang \cite{SRlincom} reformulated the SR maximization model as a linear complementarity problem and solved it using the principle pivoting algorithm \cite{SRprinpivot}. This method requires a vector $\tilde{\bx}\in \Omega$ such that $f(\tilde{\bx})<0$.

Bo{\c{t}} and Csetnek \cite{boct2017proximal} introduced a proximal gradient algorithm tailored to address model \eqref{mod:fracfg1} when the numerator is proper, lower semicontinuous, and convex, with a smooth denominator. Subsequently, Bo{\c{t}} et al. \cite{boct2022extrapolated} proposed a proximal subgradient algorithm that incorporates extrapolations for the single-ratio fractional optimization model with a proper and lower semicontinuous numerator and denominator, which are not necessarily convex. Extending their work, Bo{\c{t}} et al. \cite{boct2023inertial} developed an inertial proximal block coordinate method for solving nonsmooth sum-of-ratios fractional optimization problems with block structure. Zhang and Li \cite{zhang2022first} proposed a proximal gradient subgradient algorithm to solve a class of problems \eqref{mod:fracfg1}, where $g$ is convex but may not be smooth and $f$ is potentially nonconvex and nonsmooth. This method stipulates that $f(\bx)\geqs0$ for all $\bx\in\Omega$, and the level-boundedness of $f/g$, to guarantee convergence to a critical point of the subtractive form of the objective function. While these proximal gradient or subgradient methods ensure convergence to critical points of the fractional optimization models, they do not guarantee globally optimal solutions.

In this paper, we propose an efficient algorithm and establish its globally optimal solutions for a class of problems \eqref{mod:fracfg1}, where both $f$ and $g$ are convex, semi-algebraic, Lipschitz continuous, and differentiable with Lipschitz continuous gradients on the constraint set $\Omega$, and $\Omega$ is closed, convex, and semi-algebraic. The contributions of our work are threefold:
\begin{enumerate}
\item Our algorithm involves only a single proximal gradient operation per iteration, as opposed to a full-scale nontrivial inner-loop optimization, yielding substantial computational savings. In contrast to the theoretical convergence results reported in existing literature on proximal gradient-type algorithms for fractional optimization, our algorithm is theoretically guaranteed to converge to a globally optimal solution, not just to a critical point.

\item Our algorithm dispenses with the commonly required nonnegativity condition for the numerator $-f$ (or $f$), thereby accommodating a broader range of scenarios. While many algorithms necessitate a condition no less stringent than $-f({\bx})\geqs0$ \cite{FPreview1,frdindelbach2,FPreview2} or $f({\bx})\geqs0$ \cite{zhang2022first} for all ${\bx}\in \Omega$, our algorithm only requires that $f(\bx^*)\leqs0$ at the critical point $\bx^*$ to which it converges, in order to attain a globally optimal solution.

\item The boundedness of the constraint set is not a prerequisite for our algorithm. By foregoing the nonnegativity of $-f$ (or $f$) and the boundedness of $\Omega$, we avoid the limitations of many existing algorithms that either lack feasible points or fail to converge. Our algorithm, however, guarantees the attainment of at least one critical point.
\end{enumerate}

The rest of this paper is organized as follows. Section \ref{sec:relatework} reviews several closely related works, underscoring the novelty of our algorithm. Section \ref{sec:PGAforFracOpt} provides a detailed explanation of our algorithm. Section \ref{sec:numerexam} demonstrates the application of our algorithm to practical tasks, including the prevalent and crucial task of Sharpe ratio maximization. Section \ref{sec:experiment} presents experimental results confirming that our algorithm not only achieves ground-truth optimal solutions but also excels in real-world optimization tasks. Finally, we conclude our paper in Section \ref{sec:conclusion}.

\section{Related works}\label{sec:relatework}
Interested readers may refer to review papers \cite{FPreview1,FPreview3,FPreview4} for more comprehensive information on the topic of fractional optimization. In this section, we discuss several works closely related to ours to better under score the motivations and contributions of our proposed method.

\subsection{Dinkelbach's approach}
\label{sec:dinkelbachapproach}
Dinkelbach's approach \cite{frdindelbach} to solve \eqref{eqn:fracopt-ccfp} consists of the following steps.
\begin{itemize}
\item[$Step\ 1.$] Choose an initial point $\bx^{0}\in \Omega$ and compute $c_{0}=\frac{-f(\bx^{0})}{g(\bx^{0})}$. Set $k=0$.
\item[$Step\ 2.$] Solve the concave maximization problem:
\begin{equation}
\label{eqn:fracopt-dink}
\max_{\bx\in\Omega} \left\{-f(\bx)-c_{k}g(\bx)  \right\}
\end{equation}
to obtain an optimal solution $\bx^{k+1}$.
\item[$Step\ 3.$] If $f(\bx^{k+1})+c_{k}g(\bx^{k+1})=0$, then define the optimal solution as $\bx^*:=\bx^{k+1}$ and stop.
\item[$Step\ 4.$] If $f(\bx^{k+1})+c_{k}g(\bx^{k+1})<0$, update $c_{k+1}=\frac{-f(\bx^{k+1})}{g(\bx^{k+1})}$, increment $k$ by one, and return to Step 2.
\end{itemize}
This approach primarily establishes a connection between (\ref{eqn:fracopt-ccfp}) and its parametric counterpart:
\begin{gather}
\label{eqn:fracopt-dink-para}
F(c):=\max_{\bx\in\Omega}\left\{-f(\bx)-cg(\bx)\right\},\\
\notag c^*=\frac{-f(\bx^*)}{g(\bx^*)}=\max_{\bx\in\Omega}\left\{\frac{-f(\bx)}{g(\bx)}\right\}\quad  \text{if and only if}\quad F(c^*)=0.
\end{gather}
Problem (\ref{eqn:fracopt-ccfp}) is thus equivalent to finding a number $c^*$ such that $F(c^*)=0$. It is crucial to maintain (\ref{eqn:fracopt-dink-para}) as a concave maximization model for $\bx$ to ensure that an optimal point is obtained numerically. For this purpose, one must choose an initial point $\bx^{0}\in \Omega$ to satisfy $c_0\geqs 0$. Consequently, the sequence $\{c_k\}$  will be strictly monotonically increasing, preserving the concavity of the maximization objective function in \eqref{eqn:fracopt-dink-para}.

However, Dinkelbach's method becomes inapplicable if $f(\bx)>0$ for all $\bx\in \Omega$, because the maximization objective function in \eqref{eqn:fracopt-dink-para} may become nonconcave. Moreover, if the constraint set $\Omega$ is unbounded, a solution to \eqref{eqn:fracopt-dink-para} or \eqref{eqn:fracopt-ccfp} may not exist. Additionally, even the concave maximization problem \eqref{eqn:fracopt-dink-para} could incur high computational costs. Therefore, there are critical aspects of this approach that warrant significant improvements.

\subsection{Pang's approach}
\label{sec:LCP}
When (\ref{eqn:fracopt-ccfp}) assumes specific forms, it can be reformulated as problems other than (\ref{eqn:fracopt-dink-para}). For instance, in the case of Sharpe ratio optimization \cite{SRlincom},
\begin{gather*}
f(\bx):=-\bm{\mu}^\top \bx,\quad g(\bx):=\sqrt{\bx^\top\bV\bx},\\
\Omega:=\left\{\bx\in\bbRn|\hspace{2pt}\bx\geqs{\bm0}_n,\ \ \bx^\top{\bm1}_n=1\ \mbox{and}\ \bC\bx\leqs\bd\right\},
\end{gather*}
where ${\bm0}_n$ and ${\bm1}_n$ represent the $n$-dimensional zero and one vectors, respectively. Here, $\bC\in\bbR^{m\times n}$ and $\bd\in \bbR^{m}$ constitute a linear constraint for $\bx$. The aforementioned problem is equivalent to a linear complementarity problem:
\begin{equation*}
\begin{cases}
\bu=-\bm{\mu}+\bV\bx+(\bC^\top-{\bm1}_n\bd^\top)\by\geqs {\bm0}_n, \quad \bx\geqs {\bm0}_n,\\
\bv=-(\bC-\bd{\bm1}_n^\top)\bx\geqs {\bm0}_m, \quad \by\geqs {\bm0}_m,\\
\bu^\top\bx=\bv^\top\by=0.
\end{cases}
\end{equation*}
This can be efficiently solved using the principle pivoting algorithm \cite{SRprinpivot}, yielding a globally optimal solution. The approach necessitates that $\mu_i>0$ for some $i$, indicating that at least an asset exhibits a positive return.

Our work explores a more general form of fractional optimization that encompasses the Sharpe ratio optimization. Our method guarantees convergence to a critical point without the prerequisite of $\mu_i>0$, broadening its applicability to a variety of scenarios.

\subsection{Zhang and Li's approach}
\label{sec:nonconfracopt}
Recently, Zhang and Li \cite{zhang2022first} addressed a category of nonconvex fractional optimization problems:
\begin{equation}\label{eqn:fracopt-zhang}
\min_{\bx\in\Omega}\left\{\frac{h_1(\bx)+h_2(\bx)}{g(\bx)}\right\},
\end{equation}
where $h_1:\bbRn\to\bbR$ is possibly nonconvex and nonsmooth, $h_2$ is Lipschitz differentiable, $h_1+h_2$ is nonnegative over $\bbRn$, and $g:\bbRn\to\bbR$ is positive and convex. They proposed a proximal gradient subgradient algorithm, described by the iteration scheme:
\begin{equation*}
\begin{cases}
\by^{k+1}\in\partial g(\bx^{k})\\
c_k=\frac{h_1(\bx^{k})+h_2(\bx^{k})}{g(\bx^{k})}\\
\bx^{k+1}\in \prox_{\alpha_{k}h_1}\left(\bx^{k}-\alpha_{k}\nabla h_2(\bx^{k})+\alpha_{k}c_{k}\by^{k+1}\right)
\end{cases},
\end{equation*}
where $\{\alpha_k\}$ are algorithmic parameters. The symbols `$\partial$' and `$\prox$' denote the limiting-subdifferential and the proximity operator, respectively, with definitions provided later in Definition \ref{subdifferentials} $(ii)$ and \eqref{def_prox}. This algorithm is designed to find a critical point $\bx^*$ of the surrogate objective function with the subtractive form $h_1+h_2-c^*g$, where $c^*:=\frac{h_1(\bx^*)+h_2(\bx^*)}{g(\bx^*)}$. Given that the sign of $(h_1+h_2)$ is opposite to that in Dinkelbach's method, the resulting $\bx^*$ is not necessarily a local or global optimum of either the surrogate objective function or the original objective function defined by \eqref{eqn:fracopt-zhang}. In this paper, we examine a distinct class of fractional optimization problems from (\ref{eqn:fracopt-zhang}), where we are able to secure globally optimal solutions.

\section{Proximal gradient algorithm for fractional optimization problems}\label{sec:PGAforFracOpt}
In this section, we develop a proximal gradient algorithm (PGA) for solving a specific class of fractional optimization problems. We will prove that, under certain assumptions, our proposed algorithm is capable of converging to a globally optimal solution of the nonconvex fractional optimization model.

To more effectively describe the model we intend to solve, we first revisit the concepts of semi-algebraic sets and functions as defined by Attouch et al. (2010) \cite{attouch2010proximal}. Throughout this paper, $\overline{\bbR}$ denotes the extended real number set $\bbR\cup\{+\infty\}$.
\begin{definition}[Semi-algebraic sets and functions]\label{def:semialgebra}
A subset $\mathcal{S}\subset\bbRn$ is called semi-algebraic if it can be expressed as $\mathcal{S}=\bigcup\limits_{j=1}^s\bigcap\limits_{i=1}^t\{\bx\in\bbR^n|\,p_{ij}(\bm{x})=0, q_{ij}(\bm{x})<0\}$, with $p_{ij}$ and $q_{ij}$ being real polynomial functions for each $i\in\bbN_t$ and $j\in\bbN_s$, given some positive integers $s,t\in\bbN_+$. A function $\psi: \mathcal{S}\to\overline{\bbR}$ is semi-algebraic if its graph, defined as $\{(\bm{x},y)\in\mathcal{S}\times\bbR|\,y=\psi(\bx)\}$, is a semi-algebraic subset of $\mathbb{R}^{n+1}$.
\end{definition}

This paper deals with a category of fractional optimization problems as depicted in \eqref{mod:fracfg1}, where $\Omega\subset\bbRn$ is a set that is closed, convex, and semi-algebraic. The functions $f:\bbRn\to\bbR$ and $g:\bbRn\to\bbR$ adhere to the subsequent two assumptions:\\
{\bf Assumption 1.} The functions $f$ and $g$ are both convex, semi-algebraic, Lipschitz continuous, and differentiable with Lipschitz continuous gradients over $\Omega$. Furthermore, $g(\bx)>0$ for every $\bx\in\Omega$.\\
{\bf Assumption 2.} For any $d\in\left\{\frac{f(\bx)}{g(\bx)}\,\Big|\,\bx\in\Omega\right\}$, the level set $\left\{\bx\in\Omega\,\Big|\,\frac{f(\bx)}{g(\bx)}\leq d\right\}$ is bounded.

It is worth noting that Assumption 2 is less stringent than the bounded-constraint-set assumption which is commonly required by many existing methods. The former leverages the level-boundedness of the function to ease the constraint on the variable, while the latter imposes a direct constraint on the variable independent of the properties of the function. It is also straightforward to identify examples that fulfill the former assumption but not the latter, as discussed in Section \ref{sec:exampunbound}. Significantly, Assumption 2 is also less demanding than the general sense of level-boundedness --- it only needs to apply within the set $\left\{\frac{f(\bx)}{g(\bx)}\,\Big|\,\bx\in\Omega\right\}$, rather than across the entirety of $\bbR$.

\begin{proposition}\label{prop:fglbound}
Given a closed set $\Omega\subset\mathbb{R}^n$, and two continuous functions $f$ and $g$ on $\Omega$ that fulfill Assumption 2, with $g(\bx)>0$ for all $\bx\in\Omega$, there exists a real number $M$ such that $\frac{f(\bx)}{g(\bx)}\geq M$ for all $\bx\in\Omega$.
\end{proposition}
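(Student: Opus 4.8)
The plan is to exploit Assumption 2 to produce a single compact sublevel set on which the continuous quotient $f/g$ attains a finite minimum, and then to argue that this minimum serves as a global lower bound on all of $\Omega$.

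First I would dispose of the trivial case $\Omega=\emptyset$, for which the claim holds vacuously. Otherwise, fix any $\bx^{0}\in\Omega$ and set $d_{0}:=f(\bx^{0})/g(\bx^{0})$; since $g(\bx^{0})>0$, this is a well-defined real number belonging to the range $\{f(\bx)/g(\bx)\mid\bx\in\Omega\}$. I then consider the sublevel set $L:=\{\bx\in\Omega\mid f(\bx)/g(\bx)\leqs d_{0}\}$, which is nonempty since $\bx^{0}\in L$.

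Next I would establish that $L$ is compact. Because $g>0$ on $\Omega$ and both $f$ and $g$ are continuous, the quotient $f/g$ is continuous on $\Omega$; hence $L$, being the preimage of the closed half-line $(-\infty,d_{0}]$ under this continuous map, is closed relative to $\Omega$, and since $\Omega$ is itself closed in $\bbRn$, $L$ is closed in $\bbRn$. Boundedness of $L$ is precisely the content of Assumption 2 applied to $d=d_{0}$. Thus $L$ is closed and bounded, i.e.\ compact. By the extreme value theorem, the continuous function $f/g$ attains a minimum over $L$; I set $M:=\min_{\bx\in L}f(\bx)/g(\bx)$.

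It then remains to verify that $M$ bounds $f/g$ from below on the whole of $\Omega$ via a clean split into $L$ and its complement. For $\bx\in L$ the bound $f(\bx)/g(\bx)\geqs M$ is immediate. For $\bx\in\Omega\setminus L$ one has $f(\bx)/g(\bx)>d_{0}\geqs M$, where the last inequality holds because $\bx^{0}\in L$ forces $M\leqs f(\bx^{0})/g(\bx^{0})=d_{0}$. Combining the two cases yields $f(\bx)/g(\bx)\geqs M$ for every $\bx\in\Omega$, as required. The argument is essentially a packaging of the Weierstrass extreme value theorem, so I do not anticipate a genuine obstacle; the one point deserving care is the compactness of $L$, where boundedness is handed to us by Assumption 2 but closedness relies on the continuity of $f/g$ (which in turn uses $g>0$) together with the closedness of $\Omega$.
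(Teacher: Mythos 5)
Your proof is correct, and it takes a genuinely different route from the paper's. You pick a single value $d_0=f(\bx^0)/g(\bx^0)$ in the range, observe that the corresponding sublevel set $L$ is closed (continuity of $f/g$, which uses $g>0$, plus closedness of $\Omega$) and bounded (Assumption 2), hence compact, extract the minimum $M$ of $f/g$ over $L$ by the extreme value theorem, and then note that every point outside $L$ has quotient value exceeding $d_0\geqs M$ by definition of $L$. This is the standard ``level-boundedness implies the infimum is attained'' argument, and it handles bounded and unbounded $\Omega$ uniformly. The paper instead splits into cases: for bounded $\Omega$ it invokes compactness of $\Omega$ directly, and for unbounded $\Omega$ it argues by contradiction, building a sequence $\{\bx_k\}$ with $\|\bx_k\|_2>k$ lying in a single sublevel set and contradicting Assumption 2. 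Your direct argument is arguably cleaner --- it avoids both the case split and the contradiction, and as a byproduct it shows that the infimum of $f/g$ over $\Omega$ is actually attained (at a minimizer in $L$), which the paper's proof does not explicitly yield. The paper's contradiction argument buys nothing extra here; the only care your version requires is the closedness of $L$, which you address explicitly.
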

\begin{proof}
The function $f/g$ is continuous on $\Omega$. If $\Omega$ is bounded, then it is compact, and hence $f/g$ is bounded on $\Omega$, which confirms the proposition.

For the case where $\Omega$ is unbounded, we argue by contradiction. Suppose that for any $M\in\bbR$, there exists $\bx\in\Omega$ such that $f(\bx)/g(\bx)<M$. Note that for any $k\in\bbN_+$, there exists $m_k\in\left\{f(\bx)/g(\bx)|\hspace{2pt}\bx\in\Omega\right\}$ such that $f(\bx)/g(\bx)\geqs m_k$ for all $\bx$ in the compact set $\{\bx\in\Omega|\hspace{2pt}\|\bx\|_2\leqs k\}$, and the sequence $\{m_k\}_{k\in\bbN_+}$ is monotonically decreasing. This together with the assumption for contradiction imply that for any $k\in\bbN_+$, there must be $\bx_{k}\in\Omega$ such that $f(\bx_{k})/g(\bx_{k})<m_k$ and $\|\bx_{k}\|_2>k$. Setting $d=m_1$, we then have $\{\bx_{k}\}_{k\in\bbN_+}\subset\left\{\bx\in\Omega|\hspace{2pt}f(\bx)/g(\bx)\leqs d\right\}$, leading to the conclusion that the level set $\left\{\bx\in\Omega|\hspace{2pt}f(\bx)/g(\bx)\leqs d\right\}$ is unbounded, which contradicts Assumption 2. This contradiction completes the proof.
\end{proof}

To solve \eqref{mod:fracfg1}, we first recast it as an equivalent fractional optimization model whose objective function is guaranteed to be nonnegative. The nonnegativity of the objective function is essential for the later convergence proof of our algorithm. Proposition \ref{prop:fglbound} ensures the existence of an $M \in \mathbb{R}$ such that $f(\bx) - Mg(\bx) \geq 0$. We then define the function $\tilde{f}:\bbR^n \to \bbR$ by
\begin{equation}\label{def:tildef}
\tilde{f}(\bx):=f(\bx)-M g(\bx),
\end{equation}
and denote $L_f$ and $L_{\nabla f}$ as the Lipschitz constants of $f$ and its gradient, respectively; similarly, $L_g$ and $L_{\nabla g}$ for $g$ and its gradient. Then, the following facts are established:

\noindent{\bf Fact 1} $\tilde{f}(\bx)\geqs0$ for all $\bx\in\Omega$.

\noindent{\bf Fact 2} For any $d\in\left\{\frac{\tilde{f}(\bx)}{g(\bx)}\Big|\hspace{2pt}\bx\in\Omega\right\}$, the level set $\left\{\bx\in\Omega\Big|\hspace{2pt}\frac{\tilde{f}(\bx)}{g(\bx)}\leqs d\right\}$ is bounded.

\noindent{\bf Fact 3} The functions $\tilde{f}$ and $\nabla\tilde{f}$ are $L_{\tilde{f}}$ and $L_{\nabla\tilde{f}}$-Lipschitz continuous on $\Omega$, respectively, where $L_{\tilde{f}}:=L_{f}+|M|\cdot L_{g}$ and $L_{\nabla\tilde{f}}:=L_{\nabla f}+|M|\cdot L_{\nabla g}$.

We also define the indicator function $\iota_{\Omega}:\bbRn\to\overline{\bbR}$ by
\begin{equation}\label{def:iotaOmega}
\iota_{\Omega}(\bx):=\begin{cases}
0,&\mbox{if}\ \bx\in\Omega,\\
+\infty,&\mbox{otherwise}.
\end{cases}
\end{equation}
Note that $\iota_{\Omega}$ is lower semicontinuous and convex because $\Omega$ is closed and convex (refer to Example 1.25 and Example 8.3 in \cite{bauschke2017convex}). Now, \eqref{mod:fracfg1} is equivalent to
\begin{equation}\label{mod:fracfg2}
\min_{\bx\in\Omega}\left\{F(\bx):=\frac{\tilde{f}(\bx)}{g(\bx)}\right\}.
\end{equation}
We will then focus on solving \eqref{mod:fracfg2}.

\subsection{Development of proximal gradient algorithm}
In this subsection, we develop a Proximal Gradient Algorithm (PGA) to solve \eqref{mod:fracfg2}. To this end, we will establish the relationship between the global optimal solution of the model and the proximal characterization (see Proposition \ref{prop:proxcrit}), and then propose the PGA based on this characterization.

We first recall the notions of subdifferentials and critical points \cite{mordukhovich2018variational}. Let $B(\bx;\delta)$ denote the neighborhood of $\bx$ with radius $\delta>0$. The lower limit of function $\psi$ at $\bx$ and the domain of $\psi$ are defined by
$$
\liminf\limits_{\substack{\bm{y}\to\bm{x}\\ \bm{y}\neq\bm{x}}}\psi(\by):=\lim\limits_{\delta\to0^+}\Big(\inf\limits_{\by\in B(\bx;\delta)}\psi(\by)\Big)\ \ \text{and}\ \ \dom\psi:=\{\bx\in\bbRn|\hspace{2pt}\psi(\bx)<+\infty\},
$$
respectively.

\begin{definition}[Subdifferentials and critical point]\label{subdifferentials}
Let $\psi:\bbRn\to\overline{\bbR}$ be a proper lower semicontinuous function.
\begin{itemize}
\item[$(i)$] For each $\bm{x}\in\dom\psi$, the Fr{\'e}chet subdifferential of $\psi$ at $\bm{x}$, denoted by $\hat{\partial}\psi(\bm{x})$, is the set of all vectors $\bm{u}\in\bbRn$ that satisfy $\liminf\limits_{\substack{\bm{y}\to\bm{x}\\ \bm{y}\neq\bm{x}}}\frac{\psi(\bm{y})-\psi(\bm{x})-\langle\bm{u},\bm{y}-\bm{x}\rangle}{\|\bm{y}-\bm{x}\|_2}\geqs0$. When $\bm{x}\notin\dom\psi$, we define $\hat{\partial}\psi(\bm{x})=\varnothing$.
\item[$(ii)$] The limiting-subdifferential, or simply the subdifferential of $\psi$ at $\bm{x}\in\dom\psi$, denoted by $\partial\psi(\bm{x})$, is defined through the following closure process:
\begin{align*}
\partial\psi(\bm{x}):=\{\bm{u}\in\mathbb{R}^n|\hspace{2pt}&\exists\bm{x}^k\to\bm{x},\ \psi(\bm{x}^k)\to\psi(\bm{x})\ \mbox{and}\ \bm{u}^k\in \hat{\partial}\psi(\bm{x}^k)\to\bm{u} \ \mbox{as}\ k\to+\infty\}.
\end{align*}
\end{itemize}
We say that $\bx$ is a critical point of $\psi$ if $\bm{0}_n\in\partial\psi(\bx)$.
\end{definition}

We also recall the following three known results about subdifferential, presented as the following Lemmas \ref{lem_Fermat}, \ref{lem_frechetgrad}, and \ref{lem_limgradconv}. These are directly from Theorem 10.1, Exercise 8.8 (c) and Proposition 8.12 of \cite{rockafellar2009variational}, respectively.

\begin{lemma}[Fermat's rule {\cite[Theorem 10.1]{rockafellar2009variational}}]\label{lem_Fermat}
Let $\psi:\bbRn\to\bar{\bbR}$ be a proper function. If $\bx\in\bbRn$ is a local minimizer of $\psi$, then ${\bm0}_n\in\partial\psi(\bx)$.
\end{lemma}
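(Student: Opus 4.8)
The plan is to verify the conclusion directly from the definitions in Definition \ref{subdifferentials}, in two stages: first placing $\bm{0}_n$ in the Fr\'echet subdifferential $\hat{\partial}\psi(\bx)$, and then promoting this to the limiting subdifferential $\partial\psi(\bx)$ via the elementary inclusion $\hat{\partial}\psi(\bx)\subseteq\partial\psi(\bx)$.

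First I would observe that a local minimizer $\bx$ of a proper function necessarily satisfies $\psi(\bx)<+\infty$, so that $\bx\in\dom\psi$ and the relevant subdifferentials are genuine candidate sets rather than $\varnothing$. The crux is then to test the defining inequality of $\hat{\partial}\psi(\bx)$ with the trial vector $\bm{u}=\bm{0}_n$. By the definition of a local minimizer there is a radius $\delta>0$ with $\psi(\by)\geqs\psi(\bx)$ for all $\by\in B(\bx;\delta)$; hence for each such $\by\neq\bx$,
\[
\frac{\psi(\by)-\psi(\bx)-\langle\bm{0}_n,\by-\bx\rangle}{\|\by-\bx\|_2}=\frac{\psi(\by)-\psi(\bx)}{\|\by-\bx\|_2}\geqs0,
\]
the numerator being nonnegative and the denominator strictly positive. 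Since this nonnegativity holds on a whole punctured neighborhood, passing to the lower limit as $\by\to\bx$ preserves it, which is exactly the assertion $\bm{0}_n\in\hat{\partial}\psi(\bx)$.

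Second, I would conclude by invoking $\hat{\partial}\psi(\bx)\subseteq\partial\psi(\bx)$, which the closure process of Definition \ref{subdifferentials} $(ii)$ yields at once upon choosing the constant sequences $\bm{x}^k\equiv\bx$ and $\bm{u}^k\equiv\bm{0}_n$: these trivially satisfy $\bm{x}^k\to\bx$, $\psi(\bm{x}^k)\to\psi(\bx)$, and $\bm{u}^k\in\hat{\partial}\psi(\bm{x}^k)\to\bm{0}_n$, whence $\bm{0}_n\in\partial\psi(\bx)$. I do not anticipate a genuine analytic obstacle, as every step is definitional; the only mild care needed is to confirm $\bx\in\dom\psi$ so that the argument is not vacuous, and to ensure that the neighborhood on which $\psi(\by)\geqs\psi(\bx)$ holds is contained in the balls $B(\bx;\delta)$ shrinking to $\bx$ in the lower-limit computation.
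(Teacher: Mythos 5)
The paper offers no proof of this lemma---it is quoted directly as \cite[Theorem 10.1]{rockafellar2009variational}---so there is no internal argument to compare yours against. Your two-stage argument is correct and is the standard textbook derivation: local minimality makes the difference quotient with $\bu=\bm{0}_n$ nonnegative on a punctured neighborhood, giving $\bm{0}_n\in\hat{\partial}\psi(\bx)$, and the constant sequences $\bx^k\equiv\bx$, $\bu^k\equiv\bm{0}_n$ realize the inclusion $\hat{\partial}\psi(\bx)\subseteq\partial\psi(\bx)$. The one convention you lean on---that a local minimizer necessarily lies in $\dom\psi$---is not literally forced by properness alone (a proper $\psi$ may still equal $+\infty$ on an open set), but it is the convention adopted in \cite{rockafellar2009variational} and implicit in Definition \ref{subdifferentials}, so your proof stands as written.
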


\begin{lemma}[{\cite[Exercise 8.8 (c)]{rockafellar2009variational}}]\label{lem_frechetgrad}
Let $\psi_1:\bbR^n\to\overline{\bbR}$ and $\psi_2:\bbR^n\to\overline{\bbR}$ be two proper lower semicontinuous functions, and let $\bx\in\bbRn$. If $\psi_1$ is differentiable in a neighborhood of $\bx$, and $\psi_2$ is finite at $\bx$, then $\partial(\psi_1+\psi_2)(\bx)=\nabla\psi_1(\bx)+\partial\psi_2(\bx)$.
\end{lemma}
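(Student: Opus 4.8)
The plan is to prove the identity in two layers, mirroring the way the limiting subdifferential is built from the Fr\'echet subdifferential in Definition \ref{subdifferentials}. First I would establish the \emph{regular} (Fr\'echet) sum rule, namely that $\hat{\partial}(\psi_1+\psi_2)(\bz)=\nabla\psi_1(\bz)+\hat{\partial}\psi_2(\bz)$ holds at every point $\bz$ at which $\psi_1$ is differentiable. Second I would propagate this identity through the closure process defining $\partial$ in order to obtain the limiting sum rule at the given point $\bx$.

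For the first layer I would argue directly from the difference-quotient characterization in Definition \ref{subdifferentials}$(i)$. Since $\psi_1$ is differentiable at $\bz$, the first-order expansion gives
\[
\psi_1(\by)-\psi_1(\bz)-\langle\nabla\psi_1(\bz),\by-\bz\rangle=o(\|\by-\bz\|_2)\quad\text{as}\ \by\to\bz,
\]
so after normalization by $\|\by-\bz\|_2$ this term vanishes in the limit. Substituting $\psi_1+\psi_2$ and a candidate subgradient $\bu$ into the liminf quotient and subtracting off this vanishing contribution, the quotient for $\psi_1+\psi_2$ with subgradient $\bu$ is seen to coincide asymptotically with the quotient for $\psi_2$ with subgradient $\bu-\nabla\psi_1(\bz)$. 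Hence $\bu\in\hat{\partial}(\psi_1+\psi_2)(\bz)$ if and only if $\bu-\nabla\psi_1(\bz)\in\hat{\partial}\psi_2(\bz)$, which is exactly the desired Fr\'echet identity.

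For the second layer I would take $\bu\in\partial(\psi_1+\psi_2)(\bx)$ and unwind Definition \ref{subdifferentials}$(ii)$ to obtain $\bx^k\to\bx$ with $(\psi_1+\psi_2)(\bx^k)\to(\psi_1+\psi_2)(\bx)$ and $\bu^k\in\hat{\partial}(\psi_1+\psi_2)(\bx^k)$ with $\bu^k\to\bu$. Because $\psi_1$ is differentiable throughout a neighborhood of $\bx$, for all large $k$ the point $\bx^k$ lies in that neighborhood, so the first-layer identity applies at $\bx^k$ and yields $\bu^k=\nabla\psi_1(\bx^k)+\bv^k$ with $\bv^k\in\hat{\partial}\psi_2(\bx^k)$. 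I would then show $\bv^k\to\bu-\nabla\psi_1(\bx)$ and $\psi_2(\bx^k)\to\psi_2(\bx)$, so that $\bu-\nabla\psi_1(\bx)\in\partial\psi_2(\bx)$, i.e. $\bu\in\nabla\psi_1(\bx)+\partial\psi_2(\bx)$. Running the same construction in reverse — starting from $\bv\in\partial\psi_2(\bx)$ and forming $\nabla\psi_1(\bx^k)+\bv^k\in\hat{\partial}(\psi_1+\psi_2)(\bx^k)$ — delivers the opposite inclusion and closes the argument.

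The main obstacle is the convergence $\nabla\psi_1(\bx^k)\to\nabla\psi_1(\bx)$, which is invoked in both directions of the closure step: it is what upgrades $\bu^k\to\bu$ to $\bv^k\to\bu-\nabla\psi_1(\bx)$, and in the reverse direction it is what keeps the constructed subgradients converging to $\nabla\psi_1(\bx)+\bv$. Mere pointwise differentiability does not force this, so the hypothesis that $\psi_1$ is differentiable \emph{on a neighborhood} of $\bx$ must be exploited to ensure that $\nabla\psi_1$ is continuous at $\bx$; in every application in this paper $\psi_1$ has a Lipschitz continuous gradient, so the required continuity holds automatically. A secondary, routine point is the value convergence $(\psi_1+\psi_2)(\bx^k)\to(\psi_1+\psi_2)(\bx)$, which I would split into the two separate limits using continuity of $\psi_1$ (immediate from differentiability) together with the convergence $\psi_2(\bx^k)\to\psi_2(\bx)$ already carried by the definition of the limiting subdifferential.
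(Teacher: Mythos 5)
The paper does not actually prove this lemma: it is quoted verbatim as Exercise~8.8(c) of Rockafellar--Wets and used as a black box, so there is no in-paper argument to compare against. Your two-layer proof (Fr\'echet sum rule via cancellation of the first-order Taylor remainder in the difference quotient, then propagation through the closure process defining the limiting subdifferential) is exactly the standard proof of that exercise, and both layers are set up correctly, including the bookkeeping of the value convergence $\psi_2(\bx^k)\to\psi_2(\bx)$ obtained by subtracting the continuous part $\psi_1$.

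The one point that deserves a sharper statement is the gradient-continuity issue you flag. You write that the hypothesis ``differentiable on a neighborhood of $\bx$'' must be exploited to ensure $\nabla\psi_1$ is continuous at $\bx$, but no amount of exploiting will extract that: differentiability on a whole neighborhood does not imply continuity of the gradient (take $\psi_1(x)=x^2\sin(1/x)$, $\psi_1(0)=0$, and $\psi_2\equiv 0$; then $\partial\psi_1(0)\supset[-1,1]$ while $\nabla\psi_1(0)+\partial\psi_2(0)=\{0\}$, so the limiting identity genuinely fails). The correct hypothesis --- and the one actually appearing in Rockafellar--Wets, where ``smooth'' means $\mathcal{C}^1$ --- is continuous differentiability near $\bx$; the lemma as transcribed in the paper is stated slightly too weakly. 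Your proof is therefore a proof of the corrected statement, and your closing observation is the right way to discharge the concern: every $\psi_1$ to which the lemma is applied in this paper has a Lipschitz continuous gradient on the relevant set, so the $\mathcal{C}^1$ hypothesis is satisfied wherever the lemma is invoked.
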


\begin{lemma}[{\cite[Proposition 8.12]{rockafellar2009variational}}]\label{lem_limgradconv}
If $\psi:\bbR^n\to\overline{\bbR}$ is a proper and convex function, then for any $\bx\in\bbRn$,
$$
\partial\psi(\bx)=\hat{\partial}\psi(\bx)=\left\{\bu\in\bbRn|\hspace{2pt}\psi(\by)\geqs\psi(\bx)+\langle\bu,\by-\bx\rangle\ \ \mbox{for all}\ \by\in\bbRn\right\}.
$$
\end{lemma}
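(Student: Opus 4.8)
The plan is to prove the two stated equalities by establishing a cycle of inclusions among the three sets. Write $C(\bx):=\left\{\bu\in\bbRn\,|\,\psi(\by)\geqs\psi(\bx)+\langle\bu,\by-\bx\rangle\ \text{for all}\ \by\in\bbRn\right\}$ for the convex subdifferential. I will show $C(\bx)\subseteq\hat\partial\psi(\bx)$, then $\hat\partial\psi(\bx)\subseteq C(\bx)$, and finally $\partial\psi(\bx)=\hat\partial\psi(\bx)$; convexity of $\psi$ enters decisively in the last two inclusions.

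The inclusion $C(\bx)\subseteq\hat\partial\psi(\bx)$ is immediate from Definition \ref{subdifferentials}$(i)$: if $\bu\in C(\bx)$, then the numerator $\psi(\by)-\psi(\bx)-\langle\bu,\by-\bx\rangle$ of the Fr{\'e}chet difference quotient is nonnegative for every $\by$, so the quotient is nonnegative for all $\by\neq\bx$ and its lower limit is therefore nonnegative.

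The main work is the reverse inclusion $\hat\partial\psi(\bx)\subseteq C(\bx)$. Given $\bu\in\hat\partial\psi(\bx)$ and an arbitrary $\by\in\bbRn$, I would probe the Fr{\'e}chet condition only along the segment $\bx_t:=\bx+t(\by-\bx)$, $t\in(0,1]$, which satisfies $\bx_t\to\bx$ as $t\to0^+$ and $\|\bx_t-\bx\|_2=t\|\by-\bx\|_2$. Convexity gives $\psi(\bx_t)-\psi(\bx)\leqs t\left(\psi(\by)-\psi(\bx)\right)$, so after substitution the difference quotient at $\bx_t$ is bounded above by the $t$-independent quantity $R:=\frac{\psi(\by)-\psi(\bx)-\langle\bu,\by-\bx\rangle}{\|\by-\bx\|_2}$. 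The key point is that the lower limit over the full deleted neighborhood is no larger than the lower limit along this single path, so the Fr{\'e}chet inequality forces $0\leqs\liminf_{t\to0^+}(\text{quotient at }\bx_t)\leqs R$, whence $R\geqs0$; this is exactly the subgradient inequality at $\by$, and since $\by$ is arbitrary we conclude $\bu\in C(\bx)$. Together with the previous step this yields $\hat\partial\psi(\bx)=C(\bx)$.

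Finally, $\hat\partial\psi(\bx)\subseteq\partial\psi(\bx)$ holds by Definition \ref{subdifferentials}$(ii)$ applied to the constant sequence $\bx^k\equiv\bx$. For the reverse, take $\bu\in\partial\psi(\bx)$ with approximating data $\bx^k\to\bx$, $\psi(\bx^k)\to\psi(\bx)$, and $\bu^k\in\hat\partial\psi(\bx^k)$ with $\bu^k\to\bu$. Applying the identity $\hat\partial\psi(\bx^k)=C(\bx^k)$ just established gives $\psi(\by)\geqs\psi(\bx^k)+\langle\bu^k,\by-\bx^k\rangle$ for every $\by$; letting $k\to+\infty$ preserves the inequality and shows $\bu\in C(\bx)=\hat\partial\psi(\bx)$. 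I expect the only delicate point to be the direction of the lower-limit comparison in the third paragraph: the restriction to a single ray is legitimate precisely because the neighborhood lower limit is dominated by the one-directional lower limit, and not conversely.
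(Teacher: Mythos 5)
Your argument is correct, but note that the paper does not prove this statement at all: it is imported verbatim as Proposition 8.12 of Rockafellar--Wets, so there is no in-paper proof to compare against. What you have written is essentially the standard textbook derivation, and every step checks out. The inclusion $C(\bx)\subseteq\hat\partial\psi(\bx)$ is indeed immediate; the ray argument for $\hat\partial\psi(\bx)\subseteq C(\bx)$ is the one genuinely nontrivial step, and you handle the delicate point correctly --- the lower limit over the full deleted neighborhood is bounded above by the lower limit along the segment $\bx_t=\bx+t(\by-\bx)$, since for small $t$ the point $\bx_t$ lies in every ball $B(\bx;\delta)$, so the Fr\'echet inequality transfers to the ray and forces $R\geqs0$. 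The closure step $\partial\psi(\bx)\subseteq C(\bx)$ correctly exploits the requirement $\psi(\bx^k)\to\psi(\bx)$ built into Definition \ref{subdifferentials}$(ii)$, which is exactly what lets you pass to the limit in $\psi(\by)\geqs\psi(\bx^k)+\langle\bu^k,\by-\bx^k\rangle$. Two harmless edge cases you leave implicit: when $\psi(\by)=+\infty$ the subgradient inequality is vacuous (and your quantity $R$ need not be formed), and when $\bx\notin\dom\psi$ both $\hat\partial\psi(\bx)$ and $C(\bx)$ are empty by properness, so the identity holds trivially. Neither affects the validity of the proof.
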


In addition, according to the definition of the subdifferential, we introduce the following lemma.
\begin{lemma}\label{lem_subdpsi1psi2}
Let $\psi_1:\bbR^n\to\overline{\bbR}$ and $\psi_2:\bbR^n\to\overline{\bbR}$ be two proper lower semicontinuous functions such that $\dom\psi_1=\dom\psi_2$, and both $\psi_1$ and $\psi_2$ are continuous on $\dom\psi_1$. If $\hat{\partial}\psi_1(\bx)=\hat{\partial}\psi_2(\bx)$ for all $\bx\in\dom\psi_1$, then $\partial\psi_1(\bx)=\partial\psi_2(\bx)$ for all $\bx\in\dom\psi_1$.
\end{lemma}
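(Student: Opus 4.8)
The plan is to unwind the definition of the limiting-subdifferential in Definition \ref{subdifferentials}$(ii)$ and to verify that its defining sequential conditions transfer verbatim from $\psi_1$ to $\psi_2$. By symmetry, it suffices to establish $\partial\psi_1(\bx)\subset\partial\psi_2(\bx)$ for each fixed $\bx\in\dom\psi_1$; the reverse inclusion then follows by interchanging the roles of $\psi_1$ and $\psi_2$, which is legitimate since every hypothesis is symmetric in the two functions.

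First I would fix $\bu\in\partial\psi_1(\bx)$ and extract from Definition \ref{subdifferentials}$(ii)$ the witnessing sequences: $\bx^k\to\bx$ with $\psi_1(\bx^k)\to\psi_1(\bx)$, and $\bu^k\in\hat{\partial}\psi_1(\bx^k)$ with $\bu^k\to\bu$. The key preliminary observation is that each $\bx^k$ lies in $\dom\psi_1$: because $\bu^k\in\hat{\partial}\psi_1(\bx^k)$, this Fr\'echet subdifferential is nonempty, whereas by the convention in Definition \ref{subdifferentials}$(i)$ it would equal $\varnothing$ if $\bx^k\notin\dom\psi_1$. Invoking the hypothesis $\dom\psi_1=\dom\psi_2$ together with $\hat{\partial}\psi_1(\bx^k)=\hat{\partial}\psi_2(\bx^k)$ (applicable precisely because $\bx^k\in\dom\psi_1$), I conclude that $\bu^k\in\hat{\partial}\psi_2(\bx^k)$.

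It remains to supply the function-value convergence $\psi_2(\bx^k)\to\psi_2(\bx)$ that must replace the condition $\psi_1(\bx^k)\to\psi_1(\bx)$ holding for $\psi_1$. Since $\bx\in\dom\psi_1$ and every $\bx^k\in\dom\psi_1$ with $\bx^k\to\bx$, the relative continuity of $\psi_2$ on $\dom\psi_1$ yields exactly $\psi_2(\bx^k)\to\psi_2(\bx)$. Assembling the pieces, the sequences $\bx^k\to\bx$, $\psi_2(\bx^k)\to\psi_2(\bx)$, and $\bu^k\in\hat{\partial}\psi_2(\bx^k)$ with $\bu^k\to\bu$ are precisely the data required by Definition \ref{subdifferentials}$(ii)$ to certify $\bu\in\partial\psi_2(\bx)$, completing the inclusion.

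I do not expect a genuine obstacle here, as the argument is a direct transfer of the sequential definition rather than an estimate. The only point deserving care is ensuring that the two auxiliary conditions of the limiting-subdifferential survive the passage from $\psi_1$ to $\psi_2$ --- namely, that each $\bx^k$ sits in the common domain (so that the Fr\'echet subdifferentials may be equated pointwise) and that the value-convergence condition persists. These are exactly the places where the hypotheses $\dom\psi_1=\dom\psi_2$ and the relative continuity of $\psi_1,\psi_2$ on $\dom\psi_1$ enter, the latter being used once in each of the two inclusions.
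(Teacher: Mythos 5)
Your proposal is correct and follows essentially the same route as the paper's proof: extract the witnessing sequences from Definition \ref{subdifferentials}$(ii)$, observe that each $\bx^k$ lies in the common domain because its Fr\'echet subdifferential is nonempty, use the relative continuity of $\psi_2$ to convert the value-convergence condition, and conclude by symmetry. No gaps.
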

\begin{proof}
Let $\bx$ be any vector in $\dom\psi_1$. Then, $\bx\in\dom\psi_2$ since $\dom\psi_1=\dom\psi_2$. We first prove that $\partial\psi_1(\bx)\subset\partial\psi_2(\bx)$. Let $\bu\in\partial\psi_1(\bx)$. By the definition of subdifferential, there exists a sequence $\{\bx^k\}$ satisfying $\bx^k\to\bx,~\psi_1(\bx^k)\to\psi_1(\bm{x})$ and a sequence $\{\bu^k\}$ such that $\bu^k=\bu$ and for each $k$, we have $\bu^k\in \hat{\partial}\psi_1(\bx^k)$. Since $\hat{\partial}\psi_1(\bx^k)\neq\varnothing$, it follows that $\bx^k\in\dom\psi_1$. The continuity of $\psi_2$ on $\dom\psi_1$,  along with the fact $\lim_{k\to\infty}\bx^k=\bx$, implies that $\lim_{k\to\infty}\psi_2(\bx^k)=\psi_2(\bx)$. Additionally, given that $\hat{\partial}\psi_1(\bv)=\hat{\partial}\psi_2(\bv)$ for all $\bv\in\dom\psi_1$, it follows that $\bu^k\in \hat{\partial}\psi_2(\bx^k)$ for all $k$. Consequently, $\bu\in\partial\psi_2(\bx)$, which shows that $\partial\psi_1(\bx)\subset\partial\psi_2(\bx)$.

In a similar manner, if $\bu\in\partial\psi_2(\bx)$, we can prove that $\bu\in\partial\psi_1(\bx)$, i.e., $\partial\psi_2(\bx)\subset\partial\psi_1(\bx)$. Therefore, we conclude that $\partial\psi_1(\bx)=\partial\psi_2(\bx)$, which completes the proof.
\end{proof}

We then establish the relationship between the original fractional function $F$ in \eqref{mod:fracfg2} and its corresponding subtractive form, as seen in \cite{nonlinprog,frdindelbach}. To illustrate this, we introduce the following two propositions.

\begin{proposition}\label{prop:modeleqglob}
Let the function $F$ be defined as in \eqref{mod:fracfg2}, let $\bx^*\in\Omega$, and define $c^*:=F(\bx^*)$. Then, $\bx^*$ is a globally optimal solution of \eqref{mod:fracfg2} if and only if it is a globally optimal solution of
\begin{equation}\label{model:subtraction}
\min_{\bx\in\bbR^n}\ \tilde{f}(\bx)+\iota_{\Omega}(\bx)-c^*g(\bx).
\end{equation}
\end{proposition}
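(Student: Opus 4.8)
The plan is to reduce both problems to a comparison over the common effective domain $\Omega$ and then to exploit the single sign-preserving identity that the strict positivity of $g$ on $\Omega$ makes available. The starting observation is that the indicator $\iota_{\Omega}$ renders the objectives of both \eqref{mod:fracfg2} and \eqref{model:subtraction} equal to $+\infty$ outside $\Omega$, so it suffices to compare the two objectives on $\Omega$. There $\iota_{\Omega}(\bx)=0$, hence $F(\bx)=\tilde{f}(\bx)/g(\bx)$ and the subtractive objective is $\tilde{f}(\bx)-c^*g(\bx)$. Since $\bx^*\in\Omega$ with $g(\bx^*)>0$, the constant $c^*=F(\bx^*)=\tilde{f}(\bx^*)/g(\bx^*)$ is finite and, crucially, $\tilde{f}(\bx^*)-c^*g(\bx^*)=0$, so $\bx^*$ gives the subtractive objective the value $0$.

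The engine of the proof is the elementary identity
\begin{equation*}
F(\bx)-c^*=\frac{\tilde{f}(\bx)-c^*g(\bx)}{g(\bx)}\qquad\text{for every }\bx\in\Omega.
\end{equation*}
Because $g(\bx)>0$ on $\Omega$ by Assumption 1, the denominator is positive, so $F(\bx)-c^*$ and $\tilde{f}(\bx)-c^*g(\bx)$ share the same sign; in particular $F(\bx)\geqs c^*$ if and only if $\tilde{f}(\bx)-c^*g(\bx)\geqs0$. This equivalence is precisely the bridge between the two minimization problems.

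With this in hand both implications are short. For the forward direction, if $\bx^*$ globally minimizes $F$, then $F(\bx)\geqs c^*$ for all feasible $\bx$ and in particular for all $\bx\in\Omega\subset\{\bu\mid g(\bu)\neq0\}$; the sign equivalence then gives $\tilde{f}(\bx)-c^*g(\bx)\geqs0=\tilde{f}(\bx^*)-c^*g(\bx^*)$ on $\Omega$, while off $\Omega$ the subtractive objective is $+\infty$, so $\bx^*$ globally minimizes \eqref{model:subtraction}. Conversely, if $\bx^*$ globally minimizes the subtractive objective, then $\tilde{f}(\bx)-c^*g(\bx)\geqs0$ on $\Omega$, whence $F(\bx)\geqs c^*=F(\bx^*)$ on $\Omega$, and $F(\bx)=+\infty\geqs c^*$ for $\bx\notin\Omega$; thus $\bx^*$ globally minimizes $F$.

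The only delicate point --- really bookkeeping rather than a genuine obstacle --- is the mismatch of the two nominal domains: $F$ is minimized over $\{\bu\mid g(\bu)\neq0\}$ whereas \eqref{model:subtraction} ranges over all of $\bbR^n$. I would dispatch this by treating the three regions $\bx\in\Omega$, $\bx\notin\Omega$ with $g(\bx)\neq0$, and $g(\bx)=0$ separately, using that the indicator-augmented numerator forces $F=+\infty$ (and likewise the subtractive objective $=+\infty$) off $\Omega$, so these regions never compete with the value attained at $\bx^*$.
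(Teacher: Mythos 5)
Your proposal is correct and follows essentially the same route as the paper's proof: both hinge on the observation that, since $g>0$ on $\Omega$, the inequality $c^*\leqs\tilde{f}(\bx)/g(\bx)$ is equivalent to $0\leqs\tilde{f}(\bx)-c^*g(\bx)$, together with the fact that $\tilde{f}(\bx^*)-c^*g(\bx^*)=0$. Your extra bookkeeping for points outside $\Omega$ (where the indicator makes both objectives effectively non-competitive) is a harmless elaboration of what the paper leaves implicit.
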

\begin{proof}
The assertion that $\bx^*$ is a globally optimal solution of \eqref{mod:fracfg2} implies
\begin{equation}\label{neq:fracsubtr2}
c^*\leqs\frac{\tilde{f}(\bx)}{g(\bx)},\ \ \mbox{for all}\ \bx\in\Omega.
\end{equation}
Considering that $g(\bx)>0$ for all $\bx\in\Omega$, \eqref{neq:fracsubtr2} is equivalent to $0\leqs \tilde{f}(\bx)-c^*g(\bx)$ for all $\bx\in\Omega$, which can be rewritten as $\tilde{f}(\bx^*)+\iota_{\Omega}(\bx^*)-c^*g(\bx^*)\leqs \tilde{f}(\bx)+\iota_{\Omega}(\bx)-c^*g(\bx)$ for all $\bx\in\Omega$. This is equivalent to the assertion that $\bx^*$ is a globally optimal solution of \eqref{model:subtraction}, thus completing the proof.
\end{proof}

\begin{proposition}\label{prop:critpoint}
Let the function $F$ be defined as in \eqref{mod:fracfg2}, let $\bx^*\in\Omega$, and define $c^*:=F(\bx^*)$. Then, $\bx^*$ is a critical point of $F$ if and only if $\bm{0}_n\in\partial\iota_{\Omega}(\bx^*)+\nabla\tilde{f}(\bx^*)-c^*\nabla g(\bx^*)$.
\end{proposition}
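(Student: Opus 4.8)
The plan is to prove the stronger pointwise identity $\partial F(\bx^*)=\tfrac{1}{g(\bx^*)}\,\partial\phi_{c^*}(\bx^*)$, where $\phi_c:=\tilde{f}+\iota_{\Omega}-c\,g$ denotes the subtractive function with parameter $c$; since $g(\bx^*)>0$, selecting the element $\bm{0}_n$ then yields the stated equivalence, because Lemma~\ref{lem_frechetgrad} gives $\partial\phi_{c^*}(\bx^*)=\nabla\tilde{f}(\bx^*)-c^*\nabla g(\bx^*)+\partial\iota_{\Omega}(\bx^*)$, which is exactly the set on the right-hand side of the statement.

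First I would record the local algebraic identity that drives everything. For $\bz\in\Omega$ and $\by\in\Omega$, using $\tilde{f}(\bz)=F(\bz)g(\bz)$ one gets $F(\by)-F(\bz)=\tfrac{\phi_{F(\bz)}(\by)-\phi_{F(\bz)}(\bz)}{g(\by)}$ with $\phi_{F(\bz)}(\bz)=0$. Substituting this into the Fr\'echet difference quotient and writing $g(\by)\langle\bu,\by-\bz\rangle=\langle g(\bz)\bu,\by-\bz\rangle+(g(\by)-g(\bz))\langle\bu,\by-\bz\rangle$, the cross term divided by $\|\by-\bz\|_2$ is bounded by $|g(\by)-g(\bz)|\,\|\bu\|_2$ and hence vanishes as $\by\to\bz$ by continuity of $g$. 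Since $1/g(\by)\to 1/g(\bz)>0$, and $\liminf$ is unchanged under addition of a vanishing term and multiplication by a positive convergent factor, I obtain
\[
\liminf_{\by\to\bz,\,\by\neq\bz}\frac{F(\by)-F(\bz)-\langle\bu,\by-\bz\rangle}{\|\by-\bz\|_2}=\frac{1}{g(\bz)}\liminf_{\by\to\bz,\,\by\neq\bz}\frac{\phi_{F(\bz)}(\by)-\phi_{F(\bz)}(\bz)-\langle g(\bz)\bu,\by-\bz\rangle}{\|\by-\bz\|_2}.
\]
(Points $\by\notin\Omega$ contribute $+\infty$ to both quotients and are irrelevant.) Reading off the sign of each $\liminf$ gives the Fr\'echet identity $\hat{\partial}F(\bz)=\tfrac{1}{g(\bz)}\hat{\partial}\phi_{F(\bz)}(\bz)$ for every $\bz\in\Omega$.

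Then I would upgrade this to the limiting subdifferential at $\bx^*$ via Definition~\ref{subdifferentials}$(ii)$. For the forward inclusion, take $\bu^k\in\hat{\partial}F(\bx^k)$ with $\bx^k\to\bx^*$, $F(\bx^k)\to F(\bx^*)=c^*$, and $\bu^k\to\bm{0}_n$; the identity gives $g(\bx^k)\bu^k\in\hat{\partial}\phi_{F(\bx^k)}(\bx^k)$, and the elementary exact Fr\'echet sum rule for a differentiable summand (the Fr\'echet analogue of Lemma~\ref{lem_frechetgrad}), together with Lemma~\ref{lem_limgradconv} so that $\hat{\partial}\iota_{\Omega}$ is the closed normal cone, lets me isolate $\bw^k:=g(\bx^k)\bu^k-\nabla\tilde{f}(\bx^k)+F(\bx^k)\nabla g(\bx^k)\in\partial\iota_{\Omega}(\bx^k)$. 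Continuity of $\nabla\tilde{f}$, $\nabla g$, and of $F$ on $\Omega$ forces $\bw^k\to-\nabla\tilde{f}(\bx^*)+c^*\nabla g(\bx^*)$, which lies in $\partial\iota_{\Omega}(\bx^*)$ by the definition of the limiting subdifferential (note $\iota_{\Omega}(\bx^k)=0=\iota_{\Omega}(\bx^*)$); rearranging yields $\bm{0}_n\in\partial\phi_{c^*}(\bx^*)$. The reverse inclusion is symmetric, with one extra bookkeeping step: given $\bv^k\in\hat{\partial}\phi_{c^*}(\by^k)$ with $\by^k\to\bx^*$, I convert it into an element of $\hat{\partial}\phi_{F(\by^k)}(\by^k)$ by adding the correction $(c^*-F(\by^k))\nabla g(\by^k)$, apply the Fr\'echet identity to land in $\hat{\partial}F(\by^k)$, and pass to the limit.

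The main obstacle is precisely this passage to the limiting subdifferential: the pointwise identity is parametrized by $F(\bz)$ evaluated at the running point, whereas the target statement fixes the parameter at $c^*=F(\bx^*)$. The argument hinges on controlling this parameter drift, i.e.\ showing $(c^*-F(\by^k))\nabla g(\by^k)\to\bm{0}_n$, which uses the continuity of $F=\tilde{f}/g$ on $\Omega$ (guaranteed by $g>0$ together with continuity of $\tilde{f}$ and $g$) and the local boundedness of $\nabla g$ near $\bx^*$ coming from its Lipschitz continuity, as well as the closedness of $\partial\iota_{\Omega}$ from the convexity of $\Omega$. Once the drift is shown to vanish, the convergence claims assemble routinely.
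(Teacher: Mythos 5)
Your proposal is correct, and its first half is essentially the paper's own computation: the identity $F(\by)-F(\bz)=\bigl(\phi_{F(\bz)}(\by)-\phi_{F(\bz)}(\bz)\bigr)/g(\by)$ with a remainder that vanishes as $\by\to\bz$ is exactly the paper's decomposition \eqref{eq1:critpointprop}--\eqref{eq2:critpointprop} (your $\hat{\partial}F(\bz)=\tfrac{1}{g(\bz)}\hat{\partial}\phi_{F(\bz)}(\bz)$ coincides with the paper's $\hat{\partial}F(\bx)=\hat{\partial}\varphi(\bx)/a_2^2$, since $\varphi=g(\bx)\,\phi_{F(\bx)}$). Where you genuinely diverge is in the upgrade from the Fr\'echet to the limiting subdifferential. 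The paper invokes its Lemma~\ref{lem_subdpsi1psi2}, which requires a single fixed pair of functions whose Fr\'echet subdifferentials agree at \emph{every} point of the common domain --- a hypothesis that is delicate here precisely because the comparison function $\varphi$ (your $\phi_{F(\bz)}$) changes with the base point; the paper does not comment on this. You instead run the sequential definition of $\partial F(\bx^*)$ directly, isolate the normal-cone element $\bw^k\in\partial\iota_{\Omega}(\bx^k)$ via the exact sum rule, and explicitly control the parameter drift through the correction term $(c^*-F(\by^k))\nabla g(\by^k)\to\bm{0}_n$, relying on continuity of $F$ on $\Omega$, local boundedness of $\nabla g$, and closedness of the convex normal cone. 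This buys a more self-contained and airtight treatment of exactly the step the paper handles by citation; the paper's route is shorter on the page but leans on a lemma whose hypotheses are not literally met by a point-dependent comparison function. Both arrive at the same set identity $\partial F(\bx^*)=\tfrac{1}{g(\bx^*)}\bigl(\partial\iota_{\Omega}(\bx^*)+\nabla\tilde{f}(\bx^*)-c^*\nabla g(\bx^*)\bigr)$, from which the stated equivalence follows since $g(\bx^*)>0$.
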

\begin{proof}
Note that $\dom F=\Omega$. For $\bx\in\Omega$, we define $a_1:=\tilde{f}(\bx)$ and $a_2:=g(\bx)$. Given $\bu\in\Omega$ and $\bv\in\bbRn$, we can directly compute
\begin{equation}\label{eq1:critpointprop}
\frac{\frac{\tilde{f}(\bu)}{g(\bu)}-\frac{a_1}{a_2}-\langle\bv,\bu-\bx\rangle}{\|\bu-\bx\|_2}=\frac{a_2\tilde{f}(\bu)-a_1g(\bu)-\langle a_2^2\bv,\bu-\bx\rangle}{a_2^2\|\bu-\bx\|_2}+R(\bu,\bx),
\end{equation}
where $R(\bu,\bx):=\frac{\left(a_2-g(\bu)\right)\left(a_2\tilde{f}(\bu)-a_1g(\bu)\right)}{a_2^2g(\bu)\|\bu-\bx\|_2}$. Recall that $g(\bu)>0$ for $\bu\in\Omega$, $g$ is $L_g$-Lipschitz continuous on $\Omega$, and $\tilde{f}$ is continuous at $\bx$. These conditions ensure that
$$
\lim_{\substack{\bu\to\bx\\ \bu\in\Omega}}|R(\bu,\bx)|\leqs\lim_{\substack{\bu\to\bx\\ \bu\in\Omega}}\frac{L_g\left|a_2\tilde{f}(\bu)-a_1g(\bu)\right|}{a_2^2|g(\bu)|}=0,
$$
that is,
\begin{equation}\label{eq2:critpointprop}
\lim_{\substack{\bu\to\bx\\ \bu\in\Omega}}R(\bu,\bx)=0.
\end{equation}
Let $\varphi:=a_2(\tilde{f}+\iota_{\Omega})-a_1g$. It follows from \eqref{eq1:critpointprop}, \eqref{eq2:critpointprop}, and the fact $\varphi(\bx)=0$, that
\begin{align*}
\hat{\partial}F(\bx)&=\left\{\bv\in\bbR^n:\liminf_{\substack{\bu\to\bx\\ \bu\in\Omega}}\frac{\frac{\tilde{f}(\bu)}{g(\bu)}-\frac{a_1}{a_2}-\langle\bv,\bu-\bx\rangle}{\|\bu-\bx\|_2}\geqs0
\right\}\\
&=\left\{\bv\in\bbR^n:\liminf_{\substack{\bu\to\bx\\ \bu\in\Omega}}\frac{a_2\tilde{f}(\bu)-a_1g(\bu)-\langle a_2^2\bv,\bu-\bx\rangle}{a_2^2\|\bu-\bx\|_2}\geqs0\right\}\\
&=\left\{\bv\in\bbR^n:\liminf_{\substack{\bu\to\bx\\ \bu\in\Omega}}\frac{\varphi(\bu)-\varphi(\bx)-\langle a_2^2\bv,\bu-\bx\rangle}{\|\bu-\bx\|_2}\geqs0\right\},
\end{align*}
that is, $\hat{\partial}F(\bx)=\frac{\hat{\partial}\varphi(\bx)}{a_2^2}$ for all $\bx\in\Omega$. Then, by employing Lemma \ref{lem_subdpsi1psi2}, we see that
\begin{equation}\label{eq:subdiffF}
\partial F(\bx)=\frac{\partial\varphi(\bx)}{a_2^2},\ \ \mbox{for all}\ \bx\in\Omega.
\end{equation}
Now, for $\bx^*\in\Omega$, by defining $a_1^*:=\tilde{f}(\bx^*)$, $a_2^*:=g(\bx^*)$, and using Lemma \ref{lem_frechetgrad}, we conclude that $\bx^*$ is a critical point of $F$ if and only if $\bm{0}_n\in\frac{\partial[a_2^*(\tilde{f}+\iota_{\Omega})-a_1^*g](\bx^*)}{{a_2^*}^2}$, that is, $\bm{0}_n\in\partial\iota_{\Omega}(\bx^*)+\nabla\tilde{f}(\bx^*)-c^*\nabla g(\bx^*)$. This completes the proof.
\end{proof}

For the development of the Proximal Gradient Algorithm (PGA), we recall that the proximity operator of a proper function $\psi:\bbR^n\to\overline{\bbR}$ at $\bx\in\bbRn$ is defined by
\begin{equation}\label{def_prox}
\prox_{\psi}(\bx):=\argmin_{\bu\in\bbRn}\left\{\frac{1}{2}\|\bu-\bx\|_2^2+\psi(\bu)\right\}.
\end{equation}
From the definition of the operator $\prox_{\iota_{\Omega}}$, it is straightforward to see that it is exactly the projection operator $P_{\Omega}$ onto the closed convex $\Omega$, that is,
\begin{equation}\label{eq:proxproj}
\prox_{\iota_{\Omega}}(\bx)=P_{\Omega}(\bx)=\argmin_{\bu\in\Omega}\|\bu-\bx\|_2.
\end{equation}
We then provide a sufficient condition for a critical point and a global minimizer of $F$ based on the proximal characterization.
\begin{proposition}\label{prop:proxcrit}
Let the function $F$ be defined as in \eqref{mod:fracfg2}, let $\bx^*\in\Omega$, let $c^*:=F(\bx^*),$ and let $\alpha>0$. If
\begin{equation}\label{eq_proxchar}
\bx^*=\prox_{\iota_{\Omega}}(\bx^*-\alpha\nabla\tilde{f}(\bx^*)+\alpha c^*\nabla g(\bx^*)),
\end{equation}
then $\bx^*$ is a critical point of $F$. Moreover, if $f(\bx^*)\leqs0$, then $\bx^*$ is a globally optimal solution of \eqref{mod:fracfg2}, that is, a globally optimal solution of \eqref{mod:fracfg1}.
\end{proposition}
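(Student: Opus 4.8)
The plan is to handle the two claims separately: first show that $\bx^*$ is a critical point of $F$ directly from the proximal fixed-point equation \eqref{eq_proxchar}, and then upgrade this to global optimality by exploiting the sign condition $f(\bx^*)\leqs 0$ to render the subtractive objective convex.

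For the critical-point claim, I would start from the definition of the proximity operator. Writing $\bz:=\bx^*-\alpha\nabla\tilde{f}(\bx^*)+\alpha c^*\nabla g(\bx^*)$, the equation $\bx^*=\prox_{\iota_{\Omega}}(\bz)$ says that $\bx^*$ minimizes $\bu\mapsto\frac{1}{2}\|\bu-\bz\|_2^2+\iota_{\Omega}(\bu)$. Fermat's rule (Lemma \ref{lem_Fermat}) together with Lemma \ref{lem_frechetgrad} applied to the differentiable quadratic yields $\bz-\bx^*\in\partial\iota_{\Omega}(\bx^*)$, i.e. $\alpha(-\nabla\tilde{f}(\bx^*)+c^*\nabla g(\bx^*))\in\partial\iota_{\Omega}(\bx^*)$. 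Since $\iota_{\Omega}$ is convex, $\partial\iota_{\Omega}(\bx^*)$ is the normal cone to $\Omega$, which is a cone; dividing by $\alpha>0$ (equivalently, using the characterization in Lemma \ref{lem_limgradconv}, where the defining inequality is invariant under positive scaling of the subgradient) gives $-\nabla\tilde{f}(\bx^*)+c^*\nabla g(\bx^*)\in\partial\iota_{\Omega}(\bx^*)$, that is $\bm{0}_n\in\partial\iota_{\Omega}(\bx^*)+\nabla\tilde{f}(\bx^*)-c^*\nabla g(\bx^*)$. Proposition \ref{prop:critpoint} then certifies that $\bx^*$ is a critical point of $F$.

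For the global-optimality claim, the key computation is to re-express the subtractive objective \eqref{model:subtraction} in terms of the original data. Using $c^*=F(\bx^*)=\frac{\tilde{f}(\bx^*)}{g(\bx^*)}$ and the definition \eqref{def:tildef}, I would show $\tilde{f}(\bx)-c^*g(\bx)=f(\bx)-c_0\,g(\bx)$, where $c_0:=\frac{f(\bx^*)}{g(\bx^*)}$, so that the objective of \eqref{model:subtraction} equals $h(\bx):=f(\bx)+\iota_{\Omega}(\bx)-c_0\,g(\bx)$. The crucial observation is that $f(\bx^*)\leqs0$ and $g(\bx^*)>0$ force $c_0\leqs0$; hence $-c_0\,g$ is convex, and since $f$ and $\iota_{\Omega}$ are convex, $h$ is a proper, lower semicontinuous, convex function. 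Note that this re-expression is essential: a priori $c^*\geqs0$ (by Fact 1 and $g>0$), so $-c^*g$ is concave and the convexity of the subtractive form is not visible until one passes to $c_0$.

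To conclude, I would observe that the critical-point relation from the first part is, via Lemma \ref{lem_frechetgrad}, exactly $\bm{0}_n\in\partial h(\bx^*)$. For the convex function $h$, Lemma \ref{lem_limgradconv} shows that $\bm{0}_n\in\partial h(\bx^*)$ implies $h(\by)\geqs h(\bx^*)$ for all $\by\in\bbRn$, so $\bx^*$ globally minimizes \eqref{model:subtraction}. Proposition \ref{prop:modeleqglob} then transfers this to global optimality for \eqref{mod:fracfg2}, and hence for \eqref{mod:fracfg1}. I expect the main obstacle to be precisely this second part: recognizing that the sign hypothesis $f(\bx^*)\leqs0$ is exactly what converts the seemingly nonconvex subtractive form into a convex one once it is rewritten through $c_0$. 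The first part is, by comparison, a routine translation of the prox equation into a subdifferential inclusion.
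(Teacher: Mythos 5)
Your proposal is correct and follows essentially the same route as the paper: the prox fixed-point equation is converted via Fermat's rule into the inclusion $\bm{0}_n\in\partial\iota_{\Omega}(\bx^*)+\nabla\tilde{f}(\bx^*)-c^*\nabla g(\bx^*)$ (the paper absorbs the factor $\alpha$ by noting $\iota_{\Omega}=\alpha\iota_{\Omega}$, equivalent to your cone argument), and the global-optimality step is exactly the paper's: rewrite $\tilde{f}-c^*g+\iota_{\Omega}$ as $f+\iota_{\Omega}-\frac{f(\bx^*)}{g(\bx^*)}g$, use $f(\bx^*)\leqs0$ and $g(\bx^*)>0$ to get convexity, and conclude via Proposition \ref{prop:modeleqglob}.
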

\begin{proof}
By the definition of the proximity operator, we know
\begin{equation*}\label{eq1:proxcrit}
\bx^*=\argmin_{\bx\in\bbRn}\iota_{\Omega}(\bx)+\frac{1}{2}\left\|\bx-\bx^*+\alpha\nabla\tilde{f}(\bx^*)-\alpha c^*\nabla g(\bx^*)\right\|_2^2,
\end{equation*}
which, together with Lemma \ref{lem_Fermat}, implies that $\bm{0}_n\in\partial\iota_{\Omega}(\bx^*)+\alpha\nabla\tilde{f}(\bx^*)-\alpha c^*\nabla g(\bx^*)$. Noting that $\iota_{\Omega}(\bx)=\alpha\iota_{\Omega}(\bx)$ for all $\bx\in\bbRn$, the above inclusion is equivalent to
\begin{equation}\label{incl:xstarcritF}
\bm{0}_n\in\partial\iota_{\Omega}(\bx^*)+\nabla\tilde{f}(\bx^*)-c^*\nabla g(\bx^*).
\end{equation}
Then, it follows from Proposition \ref{prop:critpoint} that $\bx^*$ is a critical point of $F$.

Let $\tilde{F}(\bx;\bx^*):=\tilde{f}(\bx)+\iota_{\Omega}(\bx)-c^*g(\bx)$ and $\bx\in\bbRn$. Then \eqref{incl:xstarcritF} indicates that $\bx^*$ is a critical point of $\tilde{F}(\hspace{1pt}\cdot\hspace{2pt};\bx^*)$. By the definition of $\tilde{f}$ in \eqref{def:tildef}, we get
\begin{align*}
\tilde{F}(\bx;\bx^*)&=f(\bx)-Mg(\bx)+\iota_{\Omega}(\bx)-\frac{f(\bx^*)-Mg(\bx^*)}{g(\bx^*)}\cdot g(\bx)\\
&=f(\bx)+\iota_{\Omega}(\bx)-\frac{f(\bx^*)}{g(\bx^*)}\cdot g(\bx).
\end{align*}
We recall that the functions $f$, $\iota_{\Omega}$, and $g$ are all convex. Since $g(\bx^*)>0$ and $f(\bx^*)\leqs0$, we observe that $\tilde{F}(\bx;\bx^*)$ is also a convex function with respect to $\bx$. The convexity of $\tilde{F}(\hspace{1pt}\cdot\hspace{2pt};\bx^*)$ ensures that the critical point $\bx^*$ of $\tilde{F}(\hspace{1pt}\cdot\hspace{2pt};\bx^*)$ is indeed a global minimizer of $\tilde{F}(\hspace{1pt}\cdot\hspace{2pt};\bx^*)$. Consequently, we can conclude from Proposition \ref{prop:modeleqglob} that $\bx^*$ is a globally optimal solution of \eqref{mod:fracfg2}, which, due to the equivalence of \eqref{mod:fracfg1} and \eqref{mod:fracfg2}, is also a globally optimal solution of \eqref{mod:fracfg1}.
\end{proof}

Indeed, if there exists a point $\tilde{\bx}\in\Omega$ such that $f(\tilde{\bx})\leqs0$, it implies the existence of a critical point $\bx^*\in\Omega$ with the property $f(\bx^*)\leqs0$. This follows from the fact that the globally optimal point is, by definition, a critical point that achieves a nonpositive value. In light of Proposition \ref{prop:proxcrit}, we present the following proximal gradient algorithm \cite{zhang2022first}. In this paper, we use $\bbN$ to denote the set of nonnegative integers and $\bbN_+$ to denote the set of positive integers, respectively. We define $\bbR_+:=(0,+\infty)$.\vspace{0.5em}

\begin{breakablealgorithm}\label{algo0}
\caption{\textbf{Proximal Gradient Algorithm (PGA)}}
\begin{algorithmic}
{\bf Initialization:} Preset $\alpha\in\bbR_+$ and choose an initial vector $\bx^0\in\Omega$.\\
For each $k\in\bbN$, generate the sequence $\{\bx^k\}_{k\in\bbN}$ as follows:
\begin{equation}\label{PGAiter}
\begin{cases}
c_k=\frac{\tilde{f}(\bx^k)}{g(\bx^k)}\\
\bx^{k+1}=\prox_{\iota_{\Omega}}(\bx^k-\alpha\nabla\tilde{f}(\bx^k)+\alpha c_k\nabla g(\bx^k))
\end{cases}
\end{equation}
\end{algorithmic}
\end{breakablealgorithm}

\subsection{Convergence analysis of PGA}
In this subsection, we analyze the convergence of PGA. We aim to prove that under certain conditions, the iterative sequence generated by PGA converges to a globally optimal solution of \eqref{mod:fracfg2}, and consequently, to a globally optimal solution of \eqref{mod:fracfg1}. For this purpose, we revisit the concept of Kurdyka-{\L}ojasiewicz (KL) property and refer to \cite[Theorem 2.9]{attouch2013convergence} as Proposition \ref{prop:forconver}.

\begin{definition}[KL property]\label{def:KLproperty}
Let $\psi:\bbR^n\to\overline{\bbR}$ be a proper lower semicontinuous function. We say that $\psi$ satisfies the KL property at $\hat{\bx}\in \text{dom }\partial\psi$ if there exist $\eta\in(0,+\infty]$, a neighborhood $U$ of $\hat{\bx}$ and a continuous concave function $\varphi:[0,\eta)\to[0,+\infty]$ such that
\begin{itemize}
\item[$(i)$] $\varphi(0)=0$;
\item[$(ii)$] $\varphi$ is continuously differentiable on $(0,\eta)$ with $\varphi'>0$;
\item[$(iii)$]  $\varphi'(\psi(\bx)-\psi(\hat{\bx}))\cdot$dist$(0,\partial\psi(\bx))\geqs 1$ for any $\bx\in U\cap\{\bx\in\bbR^n:\psi(\hat{\bx})<\psi(\bx)<\psi(\hat{\bx})+\eta\}$.
\end{itemize}
\end{definition}
\begin{definition}[KL function]
We call a proper lower semicontinuous function $\psi:\bbR^n\to\overline{\bbR}$ KL function if $\psi$ satisfies the KL property at all points in $\dom\partial\psi$.
\end{definition}

\begin{proposition}[{\cite[Theorem 2.9]{attouch2013convergence}}]\label{prop:forconver}
Let $\psi:\bbR^n\to\overline{\bbR}$ be a proper lower semicontinuous function. Consider a sequence $\{\bx^k\}_{k\in\bbN}\subset\bbRn$ satisfying the following conditions:
\begin{itemize}
\item[$(i)$] There exists $a>0$ such that $\psi(\bx^{k+1})+a\|\bx^{k+1}-\bx^k\|_2^2\leqs\psi(\bx^k)$ for all $k\in\bbN$.

\item[$(ii)$] There exist $b>0$ and $\by^{k+1}\in\partial\psi(\bx^{k+1})$ such that $\|\by^{k+1}\|_2\leqs b\|\bx^{k+1}-\bx^k\|_2$ for all $k\in\bbN$.

\item[$(iii)$] There exist a subsequence $\{\bx^{k_j}\}_{j\in\bbN_+}$ and $\bx^*\in\bbR^n$ such that
$$
\lim_{j\to\infty}\bx^{k_j}=\bx^*\ \mbox{and}\ \lim_{j\to\infty}\psi(\bx^{k_j})=\psi(\bx^*).
$$
\end{itemize}
If $\psi$ satisfies the KL property at $\bx^*$, then $\lim\limits_{k\to\infty}\bx^k=\bx^*$ and $\bm{0}_n\in\partial\psi(\bx^*)$.
\end{proposition}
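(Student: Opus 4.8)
The plan is to follow the standard abstract recipe for Kurdyka--{\L}ojasiewicz convergence arguments: extract a finite-length (summability) estimate for the increments $\|\bx^{k+1}-\bx^k\|_2$, deduce that $\{\bx^k\}$ is Cauchy, and then identify its limit with $\bx^*$. First I would record the elementary consequences of the hypotheses. Condition $(i)$ shows that $\{\psi(\bx^k)\}$ is nonincreasing, and together with the subsequential limit in $(iii)$ this forces $\psi(\bx^k)\downarrow\psi(\bx^*)=:\psi^*$ with $\psi(\bx^k)\geqs\psi^*$ for every $k$. I would then dispose of the degenerate case: if $\psi(\bx^{k_0})=\psi^*$ for some index $k_0$, monotonicity and $(i)$ force $\bx^{k+1}=\bx^k$ for all $k\geqs k_0$, so the sequence is eventually constant and equal to its subsequential limit $\bx^*$, and $(ii)$ gives $\by^{k+1}=\bm{0}_n\in\partial\psi(\bx^*)$. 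Thus the substance lies in the case $\psi(\bx^k)>\psi^*$ for all $k$.

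In that case the engine of the proof is the KL inequality at $\bx^*$. Writing $\Delta_k:=\varphi(\psi(\bx^k)-\psi^*)-\varphi(\psi(\bx^{k+1})-\psi^*)$, the concavity of $\varphi$ gives $\Delta_k\geqs\varphi'(\psi(\bx^k)-\psi^*)\,(\psi(\bx^k)-\psi(\bx^{k+1}))$. I would then chain three bounds on the right-hand side: the sufficient-decrease estimate $(i)$ replaces $\psi(\bx^k)-\psi(\bx^{k+1})$ by $a\|\bx^{k+1}-\bx^k\|_2^2$; the KL property $(iii)$ of Definition \ref{def:KLproperty} replaces $\varphi'(\psi(\bx^k)-\psi^*)$ by $1/\dist(0,\partial\psi(\bx^k))$; and, after reindexing, the relative-error bound $(ii)$ with $\by^k\in\partial\psi(\bx^k)$ gives $\dist(0,\partial\psi(\bx^k))\leqs\|\by^k\|_2\leqs b\|\bx^k-\bx^{k-1}\|_2$. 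Combining these produces $\|\bx^{k+1}-\bx^k\|_2^2\leqs\frac{b}{a}\Delta_k\,\|\bx^k-\bx^{k-1}\|_2$, and the arithmetic--geometric inequality $2\sqrt{uv}\leqs u+v$ turns this into the key recursion $2\|\bx^{k+1}-\bx^k\|_2\leqs\|\bx^k-\bx^{k-1}\|_2+\frac{b}{a}\Delta_k$.

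Summing this recursion over $k$ is where the finite-length conclusion appears: the telescoping sum satisfies $\sum_k\Delta_k\leqs\varphi(\psi(\bx^{k_0})-\psi^*)<+\infty$ (using $\varphi\geqs0$), so after absorbing the shifted increment terms on the right one obtains $\sum_k\|\bx^{k+1}-\bx^k\|_2<+\infty$. Hence $\{\bx^k\}$ is Cauchy and converges; since the subsequence $\bx^{k_j}\to\bx^*$, the full limit must be $\bx^*$, giving $\lim_k\bx^k=\bx^*$. Finally, $\|\bx^{k+1}-\bx^k\|_2\to0$ and $(ii)$ force $\by^{k+1}\to\bm{0}_n$ with $\by^{k+1}\in\partial\psi(\bx^{k+1})$, while $\bx^{k+1}\to\bx^*$ and $\psi(\bx^{k+1})\to\psi(\bx^*)$; the closedness of the graph of $\partial\psi$ under this $\psi$-attentive convergence, a direct consequence of the limiting construction in Definition \ref{subdifferentials}$(ii)$, then yields $\bm{0}_n\in\partial\psi(\bx^*)$.

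The hard part will be the implicit \emph{localization}: the KL inequality is valid only for $\bx$ in a neighborhood $U$ of $\bx^*$ intersected with the strip $\{\psi^*<\psi<\psi^*+\eta\}$, so the chain of bounds above is legitimate only while the iterates remain in this region. I would therefore run the estimate as an induction, first choosing a ball $B(\bx^*;\rho)\subset U$ and an index $k_0$ (using $(iii)$) so that $\bx^{k_0}$ lies deep inside the ball with $\psi(\bx^{k_0})$ close to $\psi^*$, and then showing inductively that the partial sums of the increments, bounded through the same recursion, stay below $\rho$ so that no iterate escapes $U$. This self-consistent bootstrap---the iterates cannot leave the region in which the estimate that confines them is valid---is the genuinely delicate step; the remainder is the mechanical chaining of $(i)$, $(ii)$, and the KL inequality.
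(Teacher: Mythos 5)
The paper does not prove this proposition at all: it is imported verbatim as \cite[Theorem 2.9]{attouch2013convergence}, so there is no in-paper argument to compare against. Your reconstruction is the standard Attouch--Bolte--Svaiter proof of that theorem and is correct in all essentials: the monotonicity and degenerate-case dispatch, the chain $\Delta_k\geqs\varphi'(\psi(\bx^k)-\psi^*)\,(\psi(\bx^k)-\psi(\bx^{k+1}))\geqs a\|\bx^{k+1}-\bx^k\|_2^2/\dist(0,\partial\psi(\bx^k))$ combined with the reindexed relative-error bound, the AM--GM step, the telescoping finite-length estimate, and the closed-graph argument for $\bm{0}_n\in\partial\psi(\bx^*)$ are all the right moves, and you correctly single out the neighborhood-confinement induction as the genuinely delicate point rather than glossing over it.
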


We shall then focus on verifying that the sequence $\{\bx^k\}_{k\in\bbN}$ generated by PGA satisfies the conditions in Proposition \ref{prop:forconver}. We first verify the satisfaction of item $(i)$. To this end, we recall Proposition A.24 and Proposition B.3 of \cite{nonlinprog} as the following two lemmas.

\begin{lemma}\label{lem:Lipdescent}
Let $\psi:\bbRn\to\bbR$ be differentiable with an $L$-Lipschitz continuous gradient. Then $\psi(\by)\leqs\psi(\bx)+\langle \by-\bx,\nabla\psi(\bx)\rangle+\frac{L}{2}\|\by-\bx\|_2^2$ for all $\bx,\by\in\bbRn$.
\end{lemma}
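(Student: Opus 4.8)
The plan is to prove this standard descent inequality by reducing it to a one-dimensional problem along the segment joining $\bx$ and $\by$, and then invoking the fundamental theorem of calculus together with the Lipschitz bound on the gradient. Fix $\bx,\by\in\bbRn$ and introduce the auxiliary scalar function $\phi:[0,1]\to\bbR$ defined by $\phi(t):=\psi(\bx+t(\by-\bx))$. Since $\psi$ is differentiable everywhere, the chain rule gives $\phi'(t)=\langle\nabla\psi(\bx+t(\by-\bx)),\by-\bx\rangle$, and $\phi'$ is continuous because $\nabla\psi$ is (Lipschitz) continuous. Hence $\phi$ is continuously differentiable on $[0,1]$, so the fundamental theorem of calculus applies.

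The key step is to express the quantity of interest as an integral of a gradient difference. Since $\psi(\by)-\psi(\bx)=\phi(1)-\phi(0)=\int_0^1\phi'(t)\,dt$, subtracting the linear term yields
\[
\psi(\by)-\psi(\bx)-\langle\nabla\psi(\bx),\by-\bx\rangle=\int_0^1\langle\nabla\psi(\bx+t(\by-\bx))-\nabla\psi(\bx),\by-\bx\rangle\,dt.
\]
I would then bound the integrand using the Cauchy--Schwarz inequality followed by the $L$-Lipschitz continuity of $\nabla\psi$: for each $t\in[0,1]$, the inner product is at most $\|\nabla\psi(\bx+t(\by-\bx))-\nabla\psi(\bx)\|_2\,\|\by-\bx\|_2\leqs Lt\|\by-\bx\|_2^2$, where the final inequality uses that the two arguments differ by exactly $t(\by-\bx)$, so their distance is $t\|\by-\bx\|_2$.

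Integrating this pointwise bound over $[0,1]$ gives $\int_0^1 Lt\|\by-\bx\|_2^2\,dt=\frac{L}{2}\|\by-\bx\|_2^2$, which is precisely the claimed remainder estimate, completing the proof. There is no genuine obstacle here: the only points requiring minor care are justifying that $\phi$ is continuously differentiable so that the fundamental theorem of calculus is valid (which follows from differentiability of $\psi$ and continuity of $\nabla\psi$), and correctly tracking the factor of $t$ inside the Lipschitz bound, since it is exactly this factor that produces the constant $\frac{L}{2}$ rather than $L$.
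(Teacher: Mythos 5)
Your proof is correct: the parametrization along the segment, the fundamental theorem of calculus, Cauchy--Schwarz, and the Lipschitz bound with the factor $t$ yielding $\frac{L}{2}$ is exactly the standard argument. The paper does not prove this lemma itself but simply cites it as Proposition A.24 of \cite{nonlinprog}, and your argument coincides with the standard proof given in that reference.
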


\begin{lemma}\label{lem:FOcondforconv}
Let $\psi:\bbRn\to\bbR$ be differentiable. Then $\psi$ is convex if and only if $\psi(\by)\geqs\psi(\bx)+\langle \by-\bx,\nabla\psi(\bx)\rangle$ for all $\bx,\by\in\bbRn$.
\end{lemma}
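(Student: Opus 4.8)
The plan is to prove both implications of this standard first-order characterization of convexity directly from the definition of a convex function, using differentiability only through the fact that the one-sided directional derivative of $\psi$ at $\bx$ along $\by-\bx$ equals $\langle\by-\bx,\nabla\psi(\bx)\rangle$.

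For the forward implication, I would assume $\psi$ is convex and fix arbitrary $\bx,\by\in\bbRn$. For each $t\in(0,1]$, convexity applied to $\bx$ and $\by$ gives $\psi(\bx+t(\by-\bx))\leqs(1-t)\psi(\bx)+t\psi(\by)$. Rearranging and dividing by $t>0$ yields
$$\frac{\psi(\bx+t(\by-\bx))-\psi(\bx)}{t}\leqs\psi(\by)-\psi(\bx).$$
Letting $t\to0^+$, the left-hand side converges to the directional derivative $\langle\by-\bx,\nabla\psi(\bx)\rangle$ by differentiability of $\psi$, which produces the desired inequality $\psi(\by)\geqs\psi(\bx)+\langle\by-\bx,\nabla\psi(\bx)\rangle$.

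For the reverse implication, I would assume the gradient inequality holds for all pairs of points and verify convexity directly. Given $\bx,\by\in\bbRn$ and $\lambda\in[0,1]$, set $\bz:=(1-\lambda)\bx+\lambda\by$. Applying the hypothesis with base point $\bz$ to each of $\bx$ and $\by$ gives
$$\psi(\bx)\geqs\psi(\bz)+\langle\bx-\bz,\nabla\psi(\bz)\rangle\quad\text{and}\quad\psi(\by)\geqs\psi(\bz)+\langle\by-\bz,\nabla\psi(\bz)\rangle.$$
Taking the convex combination of these two inequalities with weights $1-\lambda$ and $\lambda$, and noting that $(1-\lambda)(\bx-\bz)+\lambda(\by-\bz)=\bz-\bz=\bm{0}_n$, the inner-product terms cancel. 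This leaves $(1-\lambda)\psi(\bx)+\lambda\psi(\by)\geqs\psi(\bz)=\psi((1-\lambda)\bx+\lambda\by)$, which is precisely the convexity inequality.

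Neither direction presents a genuine obstacle, as this is a classical result. The only point demanding care is the passage to the limit $t\to0^+$ in the forward direction, where differentiability must be invoked to identify the limit of the difference quotient with $\langle\by-\bx,\nabla\psi(\bx)\rangle$; without differentiability one would obtain only a one-sided directional derivative, so this is exactly where the hypothesis is used.
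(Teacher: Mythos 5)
Your proof is correct. The paper does not actually prove this lemma; it simply recalls it as Proposition B.3 of its nonlinear-programming reference, so there is no in-paper argument to compare against. Your two-directional argument --- difference quotients plus differentiability for the forward implication, and the convex combination of the gradient inequality at the midpoint $\bz=(1-\lambda)\bx+\lambda\by$ (with the inner-product terms cancelling) for the reverse --- is the standard textbook proof and is complete as written.
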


We recall here that throughout this section, the constraint set $\Omega$ is assumed to be a closed, convex and semi-algebraic set; functions $f$ and $g$ in model \eqref{mod:fracfg1} are always assumed to satisfy Assumption 1-2 in Section \ref{sec:PGAforFracOpt}; function $\tilde{f}$ is defined by \eqref{def:tildef}. Note that $\nabla\tilde{f}$ is $L_{\nabla\tilde{f}}$-Lipschitz continuous on $\Omega$. According to the above two lemmas, we can obtain the following result.

\begin{lemma}\label{lem:neqfgwk}
Let $\{\bx^k\}_{k\in\bbN}$ be generated by PGA. Then for all $k\in\bbN$, $\bx^k\in\Omega$ and
\begin{equation}\label{neq:fgwk}
\tilde{f}(\bx^{k+1})+\frac{1-\alpha L_{\nabla\tilde{f}}}{2\alpha}\|\bx^{k+1}-\bx^k\|_2^2\leqs c_kg(\bx^{k+1}).
\end{equation}
\end{lemma}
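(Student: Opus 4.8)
The plan is to derive the descent-type inequality \eqref{neq:fgwk} from the proximal gradient update by exploiting the Lipschitz descent lemma on $\tilde f$ and the characterization of the proximity operator through Fermat's rule. First I would prove the membership claim $\bx^k \in \Omega$ for all $k$ by induction: the initial point $\bx^0 \in \Omega$ by construction, and since \eqref{eq:proxproj} shows that $\prox_{\iota_\Omega} = P_\Omega$ is the projection onto $\Omega$, every subsequent iterate $\bx^{k+1}$ lands in $\Omega$ automatically. This also guarantees $g(\bx^k) > 0$ so that $c_k = \tilde f(\bx^k)/g(\bx^k)$ is well-defined.

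Next I would unpack the proximal step. Writing $\bx^{k+1} = \prox_{\iota_\Omega}(\bx^k - \alpha\nabla\tilde f(\bx^k) + \alpha c_k \nabla g(\bx^k))$ as the argmin in \eqref{def_prox}, the point $\bx^{k+1}$ minimizes $\iota_\Omega(\bx) + \tfrac12\|\bx - \bx^k + \alpha\nabla\tilde f(\bx^k) - \alpha c_k\nabla g(\bx^k)\|_2^2$ over $\bbRn$. Comparing the value at the minimizer $\bx^{k+1}$ with the value at $\bx^k$ (both feasible, so the indicator term vanishes for each) yields, after expanding the squared norms and cancelling,
\begin{equation*}
\langle \bx^{k+1}-\bx^k,\ \nabla\tilde f(\bx^k) - c_k\nabla g(\bx^k)\rangle + \frac{1}{2\alpha}\|\bx^{k+1}-\bx^k\|_2^2 \leqs 0.
\end{equation*}
This is the key inequality encoding optimality of the proximal step, and it isolates the linearized term $\langle \bx^{k+1}-\bx^k, \nabla\tilde f(\bx^k)\rangle$ that I will need to bound $\tilde f(\bx^{k+1})$ from above.

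The final assembly combines two applications of the earlier lemmas. Applying the descent Lemma \ref{lem:Lipdescent} to $\tilde f$ (whose gradient is $L_{\nabla\tilde f}$-Lipschitz by Fact 3) gives $\tilde f(\bx^{k+1}) \leqs \tilde f(\bx^k) + \langle \bx^{k+1}-\bx^k, \nabla\tilde f(\bx^k)\rangle + \tfrac{L_{\nabla\tilde f}}{2}\|\bx^{k+1}-\bx^k\|_2^2$; substituting the inequality from the proximal step to replace the inner product term produces
\begin{equation*}
\tilde f(\bx^{k+1}) + \frac{1-\alpha L_{\nabla\tilde f}}{2\alpha}\|\bx^{k+1}-\bx^k\|_2^2 \leqs \tilde f(\bx^k) + c_k\langle \bx^{k+1}-\bx^k, \nabla g(\bx^k)\rangle.
\end{equation*}
To finish, I would use convexity of $g$ via Lemma \ref{lem:FOcondforconv}, which gives $g(\bx^k) + \langle \bx^{k+1}-\bx^k, \nabla g(\bx^k)\rangle \leqs g(\bx^{k+1})$, hence $\langle \bx^{k+1}-\bx^k, \nabla g(\bx^k)\rangle \leqs g(\bx^{k+1}) - g(\bx^k)$; multiplying by $c_k \geqs 0$ (nonnegative since $\tilde f \geqs 0$ by Fact 1 and $g > 0$) and using $\tilde f(\bx^k) = c_k g(\bx^k)$ collapses the right-hand side to exactly $c_k g(\bx^{k+1})$, yielding \eqref{neq:fgwk}. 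The one place demanding care is the sign bookkeeping: the multiplication of the convexity inequality by $c_k$ requires $c_k \geqs 0$, so I must invoke Fact 1 explicitly, and the term $\tilde f(\bx^k) = c_k g(\bx^k)$ must cancel cleanly against $-c_k g(\bx^k)$ arising from the convexity bound. I expect the main obstacle to be organizing these cancellations correctly rather than any deep difficulty, since all the analytic ingredients are already furnished by the preceding lemmas and facts.
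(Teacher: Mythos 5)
Your proposal is correct and follows essentially the same route as the paper: the membership claim via the projection interpretation of $\prox_{\iota_\Omega}$, the value comparison of the proximal objective at $\bx^{k+1}$ versus $\bx^k$ (which is exactly the paper's inequality $\varphi(\bx^{k+1})\leqs\varphi(\bx^k)$), the descent Lemma \ref{lem:Lipdescent} applied to $\tilde f$, and the convexity of $g$ combined with $c_k\geqs0$ from Fact 1 and the identity $\tilde f(\bx^k)=c_kg(\bx^k)$. The sign bookkeeping you flag works out exactly as you describe, so no gap remains.
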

\begin{proof}
It can be easily seen from \eqref{eq:proxproj} that $\bx^k\in\Omega$ for all $k\in\bbN$. The $L_{\nabla\tilde{f}}$-Lipschitz continuity of $\nabla\tilde{f}$ together with Lemma \ref{lem:Lipdescent} yields that
\begin{equation}\label{neq:fdesclem}
\tilde{f}(\bx^{k+1})\leqs \tilde{f}(\bx^k)+\langle\bx^{k+1}-\bx^k,\nabla \tilde{f}(\bx^k)\rangle+\frac{L_{\nabla\tilde{f}}}{2}\|\bx^{k+1}-\bx^k\|_2^2.
\end{equation}
Define $\varphi(\bu):=\frac{1}{2}\left\|\bu-\left(\bx^k-\alpha\nabla\tilde{f}(\bx^k)+\alpha c_k\nabla g(\bx^k)\right)\right\|_2^2+\iota_{\Omega}(\bu)$, $\bu\in\bbRn$. We know from \eqref{PGAiter} that $\varphi(\bx^{k+1})\leqs\varphi(\bx^{k})$, that is,
\begin{equation}\label{neq1:proxiota}
\iota_{\Omega}(\bx^{k+1})+\frac{1}{2}\|\bx^{k+1}-\bx^k\|_2^2+\alpha\langle\bx^{k+1}-\bx^k,\nabla\tilde{f}(\bx^k)-c_k\nabla g(\bx^k)\rangle\leqs\iota_{\Omega}(\bx^k).
\end{equation}
Note that $\iota_{\Omega}(\bx^{k+1})=\iota_{\Omega}(\bx^{k})=0$. Inequality \eqref{neq1:proxiota} becomes
\begin{equation}\label{neq2:proxiota}
\langle\bx^{k+1}-\bx^k,\nabla\tilde{f}(\bx^k)\rangle\leqs-\frac{1}{2\alpha}\|\bx^{k+1}-\bx^k\|_2^2+\langle\bx^{k+1}-\bx^k,c_k\nabla g(\bx^k)\rangle.
\end{equation}
Combing \eqref{neq:fdesclem} and \eqref{neq2:proxiota}, and then using the equality $\tilde{f}(\bx^k)=c_k{g(\bx^k)}$, we get
\begin{equation}\label{neq1:fgwk}
\tilde{f}(\bx^{k+1})+\frac{1-\alpha L_{\nabla\tilde{f}}}{2\alpha}\|\bx^{k+1}-\bx^k\|_2^2\leqs c_k{g(\bx^k)}+\langle\bx^{k+1}-\bx^k,c_k\nabla g(\bx^k)\rangle.
\end{equation}
Recall from Fact 1 that $\frac{\tilde{f}(\bx)}{g(\bx)}\geqs0$ for all $\bx\in\Omega$. Hence $c_k\geqs0$ for all $k\in\bbN$. Using the convexity of $g$ and Lemma \ref{lem:FOcondforconv}, we have $c_k g(\bx^k)+\langle\bx^{k+1}-\bx^k,c_k\nabla g(\bx^k)\rangle\leqs c_k g(\bx^{k+1})$, which together with \eqref{neq1:fgwk} implies the desired result.
\end{proof}

\begin{proposition}\label{prop_fdescend}
Let $\{\bx^k\}_{k\in\bbN}$ be generated by PGA, where $\alpha\in\left(0,\frac{1}{L_{\nabla\tilde{f}}}\right)$. Then the following hold:
\begin{itemize}
\item[$(i)$] $\lim\limits_{k\to\infty}c_k=\lim\limits_{k\to\infty}F(\bx^k)$ exists and $c:=\lim\limits_{k\to\infty}c_k\geqs0$.
\item[$(ii)$] $\{\bx^k\}_{k\in\bbN}$ is bounded.
\item[$(iii)$] There exist $M_1,M_2\in\bbR_+$ such that $M_1\leqs g(\bx^k)\leqs M_2$ for all $k\in\bbN$.
\item[$(iv)$] There exists $a\in\bbR_+$ such that $F(\bx^{k+1})+a\|\bx^{k+1}-\bx^k\|_2^2\leqs F(\bx^k)$ for all $k\in\bbN$.
\item[$(v)$] $\lim\limits_{k\to\infty}\|\bx^{k+1}-\bx^k\|_2=0$.
\end{itemize}
\end{proposition}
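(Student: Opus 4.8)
The plan is to extract all five items from the single descent inequality \eqref{neq:fgwk} of Lemma \ref{lem:neqfgwk}, treating them in the order $(i)\to(ii)\to(iii)\to(iv)\to(v)$, since each relies on its predecessors. Write $\beta:=\frac{1-\alpha L_{\nabla\tilde{f}}}{2\alpha}$; the hypothesis $\alpha\in\left(0,\frac{1}{L_{\nabla\tilde{f}}}\right)$ forces $\beta>0$. Dividing \eqref{neq:fgwk} by $g(\bx^{k+1})>0$ (positivity from Assumption 1) and recalling that $c_{k+1}=\tilde{f}(\bx^{k+1})/g(\bx^{k+1})$ recasts it as
$$c_{k+1}+\frac{\beta}{g(\bx^{k+1})}\|\bx^{k+1}-\bx^k\|_2^2\leqs c_k,$$
which will serve as the workhorse for the entire proof.

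For $(i)$, this displayed inequality shows that $\{c_k\}$ is nonincreasing, while Fact 1 gives $c_k=\tilde{f}(\bx^k)/g(\bx^k)\geqs0$; a nonincreasing sequence bounded below converges to some limit $c\geqs0$. Since $\bx^k\in\Omega$ by \eqref{eq:proxproj}, we have $F(\bx^k)=c_k$, so $\lim_{k\to\infty}F(\bx^k)=c$ as well. For $(ii)$, monotonicity yields $c_k\leqs c_0=:d$ for all $k$, so the whole sequence lies in the level set $\left\{\bx\in\Omega:\tilde{f}(\bx)/g(\bx)\leqs d\right\}$, which is bounded by Fact 2; hence $\{\bx^k\}_{k\in\bbN}$ is bounded.

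For $(iii)$, I would note that the closure of $\{\bx^k\}_{k\in\bbN}$ is compact and, since $\Omega$ is closed, contained in $\Omega$; the continuous and strictly positive function $g$ therefore attains a positive minimum $M_1$ and a finite maximum $M_2$ on this compact set, giving $M_1\leqs g(\bx^k)\leqs M_2$. For $(iv)$, I substitute the upper bound $g(\bx^{k+1})\leqs M_2$ into the workhorse inequality, so that the coefficient $\beta/g(\bx^{k+1})$ is bounded below by the uniform constant $a:=\beta/M_2\in\bbR_+$; using $F(\bx^j)=c_j$, this is precisely $F(\bx^{k+1})+a\|\bx^{k+1}-\bx^k\|_2^2\leqs F(\bx^k)$. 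Finally, for $(v)$, telescoping $(iv)$ over $k=0,\dots,N$ yields
$$a\sum_{k=0}^{N}\|\bx^{k+1}-\bx^k\|_2^2\leqs F(\bx^0)-F(\bx^{N+1})\leqs F(\bx^0),$$
where the last step uses $F\geqs0$; letting $N\to\infty$ shows the series converges, which forces $\|\bx^{k+1}-\bx^k\|_2\to0$.

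The argument is largely routine bookkeeping once the workhorse inequality is in place, so there is no single hard step; the one point demanding care is the dependency chain. Item $(iv)$ cannot be phrased with a \emph{uniform} descent constant until the uniform upper bound $M_2$ from $(iii)$ is available, and $(iii)$ in turn needs the boundedness from $(ii)$, which itself relies on the monotonicity established in $(i)$. Respecting this order, and verifying in $(iii)$ that the compact set really lies in $\Omega$ so that $g>0$ may be invoked, is all that is required.
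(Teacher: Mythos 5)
Your proposal is correct and follows essentially the same route as the paper: the same division of \eqref{neq:fgwk} by $g(\bx^{k+1})$ to obtain the descent inequality, the same use of Facts 1 and 2 for monotone convergence and boundedness, the same compactness argument for the bounds on $g$, and the same constant $a=\frac{1-\alpha L_{\nabla\tilde f}}{2\alpha M_2}$. The only cosmetic differences are that the paper takes the closure of the level set rather than of the sequence in item $(iii)$, and deduces $(v)$ directly from $F(\bx^k)-F(\bx^{k+1})\to0$ rather than by telescoping, but these are equivalent.
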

\begin{proof}
We first prove item $(i)$. It follows from Lemma \ref{lem:neqfgwk} that $\bx^k\in\Omega$ and \eqref{neq:fgwk} holds for all $k\in\bbN$. Note that $g(\bx^k)>0$ and $c_k=F(\bx^k)$ for all $k\in\bbN$. Dividing by $g(\bx^{k+1})$ on both sides of \eqref{neq:fgwk} yields
\begin{equation}\label{neq:FxkkFxk1}
F(\bx^{k+1})+\frac{1-\alpha L_{\nabla\tilde{f}}}{2\alpha g(\bx^{k+1})}\|\bx^{k+1}-\bx^k\|_2^2\leqs F(\bx^k),\ \ \mbox{for all}\ k\in\bbN,
\end{equation}
which implies that $\{F(\bx^k)\}_{k\in\bbN}$ is monotonically decreasing. From Fact 1, we see that $F(\bx^k)\geqs0$ for all $k\in\bbN$, and hence $\{F(\bx^k)\}_{k\in\bbN}$ is bounded below. Now by the monotone convergence theorem, we conclude that item $(i)$ holds.

We then prove item $(ii)$. Let $d:=\frac{\tilde{f}(\bx^0)}{g(\bx^0)}$. Since $\{F(\bx^k)\}_{k\in\bbN}$ is monotonically decreasing and $\bx^k\in\Omega$ for all $k\in\bbN$, we see that $F(\bx^k)\leqs F(\bx^0)$, that is, $\frac{\tilde{f}(\bx^k)}{g(\bx^k)}\leqs d$ for all $k\in\bbN$. Recall from Fact 2 that the level set $\Omega':=\left\{\bx\in\Omega\Big|\hspace{2pt}\frac{\tilde{f}(\bx)}{g(\bx)}\leqs d\right\}$ is bounded. This implies item $(ii)$.

Let $\overline{\Omega'}$ be the closure of $\Omega'$. Then $\{\bx^k\}_{k\in\bbN}\subset\overline{\Omega'}$ and $\overline{\Omega'}$ is compact. Besides, we have $\overline{\Omega'}\subset\Omega$, since $\Omega'\subset\Omega$ and $\Omega$ is closed. By the continuity of $g$ and the compactness of $\overline{\Omega'}$, there exist $\bx_1,\bx_2\in\overline{\Omega'}$ such that $g(\bx_1)=\inf\limits_{\bx\in\overline{\Omega'}} g(\bx)$ and $g(\bx_2)=\sup\limits_{\bx\in\overline{\Omega'}} g(\bx)$ (see Theorem 4.16 of \cite{rudin1976principles}). Recall that $g(\bx)>0$ for all $\bx\in\Omega$. Item $(iii)$ holds with $M_1:=g(\bx_1)$ and $M_2:=g(\bx_2)$.

Now it follows from \eqref{neq:FxkkFxk1} and item $(iii)$ that item $(iv)$ holds with $a:=\frac{1-\alpha L_{\nabla\tilde{f}}}{2\alpha M_2}$. Item $(v)$ can be obtained from item $(i)$ and item $(iv)$ directly.
\end{proof}

We next consider the satisfaction of item $(ii)$ in Proposition \ref{prop:forconver}.

\begin{proposition}\label{prop_qkparial}
Let $\{\bx^k\}_{k\in\bbN}$ be generated by PGA, where $\alpha\in\left(0,\frac{1}{L_{\nabla\tilde{f}}}\right)$. Then there exist $\bq^{k+1}\in\partial F(\bx^{k+1})$ and $b\in\bbR_+$ such that
\begin{equation}\label{eq:qkbwk}
\|\bq^{k+1}\|_2\leqs b\|\bx^{k+1}-\bx^k\|_2,\ \ \mbox{for}\ k\in\bbN.
\end{equation}
\end{proposition}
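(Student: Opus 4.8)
The plan is to exhibit an explicit element $\bq^{k+1}\in\partial F(\bx^{k+1})$ manufactured from the proximal step and then bound its norm term by term. First I would reuse the subdifferential formula established inside the proof of Proposition \ref{prop:critpoint}: evaluating \eqref{eq:subdiffF} at $\bx^{k+1}$ with $a_1=\tilde{f}(\bx^{k+1})$, $a_2=g(\bx^{k+1})$ and applying Lemma \ref{lem_frechetgrad} to the corresponding $\varphi$ gives
\[
\partial F(\bx^{k+1})=\frac{1}{g(\bx^{k+1})}\Big(\nabla\tilde{f}(\bx^{k+1})-c_{k+1}\nabla g(\bx^{k+1})+\partial\iota_{\Omega}(\bx^{k+1})\Big),
\]
since $c_{k+1}=\tilde{f}(\bx^{k+1})/g(\bx^{k+1})$. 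Thus it suffices to produce one element $\bw^{k+1}\in\partial\iota_{\Omega}(\bx^{k+1})$ and set $\bq^{k+1}:=\frac{1}{g(\bx^{k+1})}\big(\nabla\tilde{f}(\bx^{k+1})-c_{k+1}\nabla g(\bx^{k+1})+\bw^{k+1}\big)$.

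The natural choice of $\bw^{k+1}$ comes from the Fermat condition for the proximal (projection) step in \eqref{PGAiter}. Since $\iota_{\Omega}$ is convex and $\partial\iota_{\Omega}$ is the normal cone of $\Omega$, hence a cone, I can absorb the step size to obtain the convenient normalization
\[
\bw^{k+1}:=\tfrac{1}{\alpha}(\bx^k-\bx^{k+1})-\nabla\tilde{f}(\bx^k)+c_k\nabla g(\bx^k)\in\partial\iota_{\Omega}(\bx^{k+1}).
\]
Substituting this $\bw^{k+1}$ and regrouping, $g(\bx^{k+1})\bq^{k+1}$ decomposes into $\tfrac{1}{\alpha}(\bx^k-\bx^{k+1})$, the gradient difference $\nabla\tilde{f}(\bx^{k+1})-\nabla\tilde{f}(\bx^k)$, and the term $c_k\nabla g(\bx^k)-c_{k+1}\nabla g(\bx^{k+1})$. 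The first contributes $\tfrac1\alpha\|\bx^{k+1}-\bx^k\|_2$ exactly, and the second is controlled by the $L_{\nabla\tilde{f}}$-Lipschitz continuity of $\nabla\tilde{f}$.

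For the remaining term I would split $c_k\nabla g(\bx^k)-c_{k+1}\nabla g(\bx^{k+1})=c_k\big(\nabla g(\bx^k)-\nabla g(\bx^{k+1})\big)+(c_k-c_{k+1})\nabla g(\bx^{k+1})$. The first piece is handled by the $L_{\nabla g}$-Lipschitz continuity of $\nabla g$ together with the boundedness of $\{c_k\}$ (monotone and convergent by Proposition \ref{prop_fdescend}$(i)$, hence $0\leqs c_k\leqs c_0$), while the second uses $\|\nabla g(\bx^{k+1})\|_2\leqs L_g$ and the key estimate $|c_k-c_{k+1}|\leqs C\|\bx^{k+1}-\bx^k\|_2$. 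I expect this last estimate to be the main obstacle, arising from the mismatch between the parameter $c_{k+1}$ appearing in $\partial F(\bx^{k+1})$ and the parameter $c_k$ used in the iteration. I would establish it by writing the difference of ratios over the common denominator $g(\bx^k)g(\bx^{k+1})$, which is bounded below by $M_1^2>0$ through Proposition \ref{prop_fdescend}$(iii)$, and expanding the numerator as
\[
\tilde{f}(\bx^k)g(\bx^{k+1})-\tilde{f}(\bx^{k+1})g(\bx^k)=\tilde{f}(\bx^k)\big(g(\bx^{k+1})-g(\bx^k)\big)+g(\bx^k)\big(\tilde{f}(\bx^k)-\tilde{f}(\bx^{k+1})\big).
\]
The Lipschitz continuity of $\tilde{f}$ and $g$, the bound $g(\bx^k)\leqs M_2$, and the boundedness $\tilde{f}(\bx^k)=c_kg(\bx^k)\leqs c_0M_2$ then deliver the linear bound on $|c_k-c_{k+1}|$. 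Collecting the four constants and dividing by $g(\bx^{k+1})\geqs M_1$ yields \eqref{eq:qkbwk} with an explicit $b\in\bbR_+$, which completes the argument.
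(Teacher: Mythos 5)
Your proposal is correct and follows essentially the same route as the paper: you construct the identical element $\bq^{k+1}$ from the Fermat condition of the projection step combined with formula \eqref{eq:subdiffF}, use the same three-term decomposition, and bound $|c_{k+1}-c_k|$ by the same cross-multiplication-and-Lipschitz argument. The only small imprecision is the bound $\|\nabla g(\bx^{k+1})\|_2\leqs L_g$, which is not automatic when the Lipschitz constant of $g$ is taken only over $\Omega$ (e.g.\ a lower-dimensional simplex); the paper instead bounds $\|\nabla g(\bx^k)\|_2$ by a constant $C_2$ using the continuity of $\|\nabla g(\cdot)\|_2$ and the boundedness of the iterates, which is the safe fix.
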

\begin{proof}
Let
$$
\bq^{k+1}:=\frac{\bx^k-\bx^{k+1}}{\alpha g(\bx^{k+1})}+\frac{\bz_g^{k+1}+\bz_{\tilde{f}}^{k+1}}{g(\bx^{k+1})},
$$
where $\bz_g^{k+1}:=c_k\nabla g(\bx^k)-c_{k+1}\nabla g(\bx^{k+1})$ and $\bz_{\tilde{f}}^{k+1}:=\nabla\tilde{f}(\bx^{k+1})-\nabla\tilde{f}(\bx^k)$, $k\in\bbN$.
We first prove that $\bq^{k+1}\in\partial F(\bx^{k+1})$. It follows from \eqref{PGAiter} and Lemma \ref{lem_Fermat} that
$$
\bx^k-\bx^{k+1}-\alpha\nabla\tilde{f}(\bx^k)+\alpha c_k\nabla g(\bx^k)\in\partial\iota_{\Omega}(\bx^{k+1}),\ \ \mbox{for all}\ k\in\bbN.
$$
Note that $g(\bx^{k+1})>0$. The above inclusion relation implies that
\begin{equation}\label{eq:partiotawk}
\frac{\bx^k-\bx^{k+1}}{\alpha g(\bx^{k+1})}-\frac{\nabla\tilde{f}(\bx^k)-c_k\nabla g(\bx^k)}{g(\bx^{k+1})}\in\frac{\partial\iota_{\Omega}(\bx^{k+1})}{g(\bx^{k+1})}.
\end{equation}
It has been shown in the proof of Proposition \ref{prop:critpoint} that \eqref{eq:subdiffF} holds, where $a_1:=\tilde{f}(\bx)$ and $a_2:=g(\bx)$. That is,
\begin{equation}\label{eq:subdFiotafg}
\partial F(\bx)=\frac{\partial\iota_{\Omega}(\bx)}{g(\bx)}+\frac{\nabla\tilde{f}(\bx)}{g(\bx)}-\frac{\tilde{f}(\bx)\nabla g(\bx)}{g^2(\bx)},\ \ \bx\in\Omega.
\end{equation}
Combining \eqref{eq:subdFiotafg} with $\bx=\bx^{k+1}$ and \eqref{eq:partiotawk} yields that $\bq^{k+1}\in\partial F(\bx^{k+1})$, $k\in\bbN$.

We next prove that \eqref{eq:qkbwk} holds. We see from item $(ii)$ of Proposition \ref{prop_fdescend} that $\{\bx^k\}_{k\in\bbN}\subset\Omega$ is bounded. Since $\tilde{f}$ and $g$ are both continuous on $\Omega$, there exists $C_0>0$ such that $|\tilde{f}(\bx^k)|\leqs C_0$ and $|g(\bx^k)|\leqs C_0$ for all $k\in\bbN$. Using the triangle inequality and the Lipschitz continuities of functions $\tilde{f}$ and $g$, we have that
\begin{align*}
&\left|\tilde{f}(\bx^{k+1})g(\bx^k)-\tilde{f}(\bx^k)g(\bx^{k+1})\right|\\
\leqs&\left|\tilde{f}(\bx^{k+1})g(\bx^k)-\tilde{f}(\bx^k)g(\bx^{k})\right|+\left|\tilde{f}(\bx^{k})g(\bx^k)-\tilde{f}(\bx^k)g(\bx^{k+1})\right|\\
\leqs&C_0|\tilde{f}(\bx^{k+1})-\tilde{f}(\bx^{k})|+C_0|g(\bx^{k+1})-g(\bx^{k})|\leqs C_1\|\bx^{k+1}-\bx^k\|_2,
\end{align*}
where $C_1:=C_0\left(L_{\tilde{f}}+L_g\right)$. Recall from item $(iii)$ of Proposition \ref{prop_fdescend} that there exists $M_1>0$ such that $g(\bx^{k})\geqs M_1$ for all $k\in\bbN$. Then
\begin{align*}
\left|c_{k+1}-c_k\right|&=\left|\frac{\tilde{f}(\bx^{k+1})g(\bx^k)-\tilde{f}(\bx^k)g(\bx^{k+1})}{g(\bx^{k+1})g(\bx^k)}\right|\\
&\leqs\frac{1}{M_1^2}\left|\tilde{f}(\bx^{k+1})g(\bx^k)-\tilde{f}(\bx^k)g(\bx^{k+1})\right|\leqs \frac{C_1}{M_1^2}\|\bx^{k+1}-\bx^k\|_2.
\end{align*}
The definition of $\bz_g^{k+1}$ gives that
\begin{align*}
\|\bz_g^{k+1}\|_2&=\|c_k\nabla g(\bx^k)-c_{k+1}\nabla g(\bx^{k+1})\|_2\\
&\leqs|c_k|\|\nabla g(\bx^k)-\nabla g(\bx^{k+1})\|_2+|c_{k+1}-c_k|\|\nabla g(\bx^{k+1})\|_2.
\end{align*}
From Proposition \ref{prop_fdescend}, we also know that $|c_k|=F(\bx^{k})\leqs F(\bx^{0})=c_0$. The boundedness of $\{\bx^k\}_{k\in\bbN}$ and the continuity of $\|\nabla g(\cdot)\|_2$ on $\Omega$ imply that there exists $C_2>0$ such that $\|\nabla g(\bx^k)\|_2\leqs C_2$ for all $k\in\bbN$. These together with the $L_{\nabla g}$-Lipschitz continuity of $\nabla g$ give that $\|\bz_g^{k+1}\|_2\leqs C_3\|\bx^{k+1}-\bx^{k}\|_2$, where $C_3:=c_0L_{\nabla g}+\frac{C_1C_2}{M_1^2}$. In addition, the $L_{\nabla\tilde{f}}$-Lipschitz continuity of $\nabla\tilde{f}$ yields that
$$
\|\bz_{\tilde{f}}^{k+1}\|_2=\|\nabla\tilde{f}(\bx^{k+1})-\nabla\tilde{f}(\bx^k)\|_2\leqs L_{\nabla\tilde{f}}\|\bx^{k+1}-\bx^k\|_2.
$$
Therefore, by letting $b:=\frac{1}{M_1}\left(\frac{1}{\alpha}+C_3+L_{\nabla\tilde{f}}\right)>0$, we have that
\begin{align*}
\|\bq^{k+1}\|_2&\leqs\frac{1}{M_1}\left(\frac{1}{\alpha}\|\bx^{k+1}-\bx^k\|_2+\|\bz_g^{k+1}\|_2+\|\bz_{\tilde{f}}^{k+1}\|_2\right)\leqs b\|\bx^{k+1}-\bx^k\|_2,
\end{align*}
which completes the proof.
\end{proof}

We then consider the satisfaction of item $(iii)$ in Proposition \ref{prop:forconver}. Recall from Lemma \ref{lem:neqfgwk} and item $(ii)$ of Proposition \ref{prop_fdescend} that $\{\bx^k\}_{k\in\bbN}\subset\Omega$ is bounded, which together with the continuity of $F$ on $\Omega$ implies the following proposition.
\begin{proposition}\label{prop:wklimpoint}
Let $\{\bx^k\}_{k\in\bbN}$ be generated by PGA, where $\alpha\in\left(0,\frac{1}{L_{\nabla\tilde{f}}}\right)$. Then there exist a subsequence $\{\bx^{k_j}\}_{j\in\bbN_+}$ of $\{\bx^k\}_{k\in\bbN}$ and some $\bx^*\in\Omega$ such that
$$
\lim_{j\to\infty}\bx^{k_j}=\bx^*\ \ \mbox{and}\ \ \lim_{j\to\infty}F(\bx^{k_j})=F(\bx^*).
$$
\end{proposition}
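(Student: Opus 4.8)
The plan is to obtain the desired subsequence purely by compactness and then transfer convergence to the function values through continuity of $F$. First I would invoke item $(ii)$ of Proposition \ref{prop_fdescend}, which guarantees that the sequence $\{\bx^k\}_{k\in\bbN}$ is bounded. By the Bolzano--Weierstrass theorem a bounded sequence in $\bbRn$ possesses a convergent subsequence, so there exist a subsequence $\{\bx^{k_j}\}_{j\in\bbN_+}$ of $\{\bx^k\}_{k\in\bbN}$ and a point $\bx^*\in\bbRn$ with $\lim_{j\to\infty}\bx^{k_j}=\bx^*$.

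Next I would confirm that this limit point is feasible. Lemma \ref{lem:neqfgwk} ensures that $\bx^k\in\Omega$ for every $k\in\bbN$, so in particular $\bx^{k_j}\in\Omega$ for all $j$. Since $\Omega$ is closed, it contains the limit of any convergent sequence drawn from it, and therefore $\bx^*\in\Omega$.

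Finally I would appeal to the continuity of the objective. On $\Omega$ the function $F$ coincides with $\tilde{f}/g$, which is continuous there because $\tilde{f}$ and $g$ are continuous on $\Omega$ and, by Assumption 1, $g(\bx)>0$ for every $\bx\in\Omega$. Continuity of $F$ at the point $\bx^*\in\Omega$, combined with $\bx^{k_j}\to\bx^*$, then gives $\lim_{j\to\infty}F(\bx^{k_j})=F(\bx^*)$, which is exactly the second assertion.

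I do not anticipate a substantive obstacle: the statement merely packages compactness, closedness of $\Omega$, and continuity of $F$, all of which are already available from earlier results in the excerpt. The only point meriting a moment's care is that $F$ must be continuous precisely \emph{on} $\Omega$ rather than on all of $\bbRn$ (where it may be undefined wherever $g$ vanishes); this subtlety is resolved immediately by the strict positivity of $g$ on $\Omega$ furnished by Assumption 1, which keeps the denominator bounded away from zero along the entire sequence.
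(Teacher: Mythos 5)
Your proposal is correct and follows essentially the same route as the paper, which justifies this proposition in a single sentence by combining boundedness of $\{\bx^k\}_{k\in\bbN}\subset\Omega$ (from Lemma \ref{lem:neqfgwk} and item $(ii)$ of Proposition \ref{prop_fdescend}) with the continuity of $F$ on $\Omega$. Your write-up simply makes explicit the Bolzano--Weierstrass extraction and the closedness of $\Omega$, which the paper leaves implicit.
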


To employ Proposition \ref{prop:forconver} for showing the convergence of PGA. It remains to be shown that $F$ satisfies the KL property at $\bx^*$. To this end, we recall several properties of semi-algebraic functions from \cite{attouch2010proximal} and \cite{coste2002introduction} as Lemma \ref{lem:semialgeproper}, and recall a known result in \cite{attouch2013convergence} and \cite{bolte2014proximal} that establishes the relation between semi-algebraic property and KL property as Lemma \ref{lem:semialgeKL}.

\begin{lemma}\label{lem:semialgeproper}
Let $\mathcal{S}\subset\bbRn$ be a semi-algebraic set. The following statements are true:
\begin{itemize}
\item[$(i)$] The finite sum or product of semi-algebraic functions is semi-algebraic.
\item[$(ii)$] If $\psi:\mathcal{S}\to\overline{\bbR}$ is semi-algebraic and $\psi(\bx)\neq 0$ for all $\bx\in\mathcal{S}$, then $1/\psi$ is semi-algebraic.
\item[$(iii)$] If $\psi:\mathcal{S}\to\overline{\bbR}$ is semi-algebraic and $\psi(\bx)\geqs0$ for all $\bx\in\mathcal{S}$, then $\sqrt{\psi}$ is semi-algebraic.
\end{itemize}
\end{lemma}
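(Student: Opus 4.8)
The plan is to reduce every item to two standard facts about semi-algebraic sets that are available from the references \cite{attouch2010proximal,coste2002introduction}: first, that the semi-algebraic subsets of any $\bbR^N$ form a Boolean algebra (closed under finite unions, finite intersections, and complements) and are closed under Cartesian products; and second, the Tarski--Seidenberg theorem, which guarantees that the image of a semi-algebraic set under a linear projection $\bbR^{N+1}\to\bbR^{N}$ is again semi-algebraic. With these in hand, each statement follows by exhibiting the graph of the function in question as a projection of an auxiliary semi-algebraic set built from the given graphs together with a single polynomial relation.

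For $(i)$, write $\gra\psi_1,\gra\psi_2\subset\bbR^{n+1}$ for the graphs of the two semi-algebraic functions. I would form
\[
A:=\{(\bx,y_1,y_2,y)\in\bbR^{n+3}:(\bx,y_1)\in\gra\psi_1,\ (\bx,y_2)\in\gra\psi_2,\ y=y_1+y_2\},
\]
which is semi-algebraic because it is a finite intersection of preimages of $\gra\psi_1$ and $\gra\psi_2$ under coordinate projections (equivalently, products with $\bbR$) together with the zero set of the polynomial $y-y_1-y_2$. The graph of $\psi_1+\psi_2$ is then the image of $A$ under the projection $(\bx,y_1,y_2,y)\mapsto(\bx,y)$, hence semi-algebraic by Tarski--Seidenberg. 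Replacing the relation $y=y_1+y_2$ with $y=y_1y_2$ handles the product, and the finite-sum/finite-product versions follow by induction. Item $(ii)$ is even more direct: the finite part of the graph of $\iota_{\mathcal{S}}$ is exactly $\mathcal{S}\times\{0\}$, the product of the semi-algebraic set $\mathcal{S}$ with the semi-algebraic singleton $\{0\}=\{y\in\bbR:y=0\}$, and is therefore semi-algebraic.

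Items $(iii)$ and $(iv)$ use the same projection device with tailored polynomial relations. For $(iii)$, set $B:=\{(\bx,z,y):(\bx,z)\in\gra\psi,\ zy=1\}$; the constraint $zy=1$ automatically enforces $z\neq 0$, consistent with $\psi(\bx)\neq 0$, and the graph of $1/\psi$ is the projection of $B$ onto $(\bx,y)$. For $(iv)$, set $C:=\{(\bx,z,y):(\bx,z)\in\gra\psi,\ y^2=z,\ y\geqs0\}$, where $\{y\geqs0\}=\{-y<0\}\cup\{y=0\}$ lies in the Boolean algebra of semi-algebraic sets; the graph of $\sqrt{\psi}$ is the projection of $C$ onto $(\bx,y)$. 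In both cases Tarski--Seidenberg delivers semi-algebraicity of the projected set.

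The only substantive ingredient is the Tarski--Seidenberg theorem; everything else is the bookkeeping of combining polynomial equalities and inequalities and invoking closure under products and Boolean operations. I therefore do not anticipate a genuine obstacle: the difficulty is entirely encapsulated in that one cited theorem, and the remaining work is simply to assemble each auxiliary set from semi-algebraic pieces so that only the elimination of the auxiliary variable is deferred to the projection step. This is why the result is stated as a recollection from \cite{attouch2010proximal,coste2002introduction} rather than proved from first principles.
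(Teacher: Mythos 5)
The paper does not prove this lemma at all---it is stated purely as a recollection from the cited references, exactly as you anticipated in your closing remark. Your reconstruction via the Boolean-algebra closure properties and the Tarski--Seidenberg projection theorem, realizing each graph as the projection of an auxiliary semi-algebraic set cut out by one extra polynomial relation, is the standard argument given in those sources and is correct, so there is nothing to fault and no alternative route in the paper to compare against.
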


\begin{lemma}\label{lem:semialgeKL}
Let $\psi:\bbR^n\to\overline{\bbR}$ be a proper lower semicontinuous function. If $\psi$ is semi-algebraic, then it satisfies the KL property at any point of $\dom\partial\psi:=\{\bu\in\bbR^n|\hspace{2pt}\partial\psi(\bu)\neq\varnothing\}$.
\end{lemma}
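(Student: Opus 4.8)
The plan is to reduce the multidimensional KL inequality to a one-variable statement about a definable (semi-algebraic) function and then invoke the structure theory of semi-algebraic sets. Fix a point $\hat{\bx}\in\dom\partial\psi$; after subtracting the constant $\psi(\hat{\bx})$ we may assume $\psi(\hat{\bx})=0$, and we work inside a bounded neighborhood $U$ of $\hat{\bx}$. The central object is the function
$$
\theta(r):=\inf\{\dist(0,\partial\psi(\bx)):\bx\in U,\ \psi(\bx)=r\},\qquad r\in(0,\eta),
$$
which records the smallest subgradient norm on each nonzero level set near $\hat{\bx}$. If we can show that $\theta$ is semi-algebraic and strictly positive on some interval $(0,\eta)$, then defining $\varphi$ as a power majorant of a concave antiderivative of $1/\theta$ will yield exactly the desingularizing inequality $\varphi'(\psi(\bx))\cdot\dist(0,\partial\psi(\bx))\geqs 1$ of Definition \ref{def:KLproperty}$(iii)$, together with $\varphi(0)=0$ and $\varphi'>0$.

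First I would establish that the graph of the limiting subdifferential $\partial\psi$ is a semi-algebraic subset of $\bbRn\times\bbRn$. This is the crucial structural input: because $\psi$ is semi-algebraic, its Fr\'echet subdifferential has semi-algebraic graph, since by Definition \ref{subdifferentials}$(i)$ it is cut out by a first-order formula in polynomial data (the $\liminf$ condition), and the limiting subdifferential of Definition \ref{subdifferentials}$(ii)$ is obtained from it by a closure operation that preserves semi-algebraicity. With this in hand, $\dist(0,\partial\psi(\bx))=\inf\{\|\bu\|_2:\bu\in\partial\psi(\bx)\}$ is a semi-algebraic function of $\bx$ by the o-minimal projection (quantifier-elimination) theorem, because an infimum over a semi-algebraic family is a definable operation. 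Applying the same projection principle once more, now eliminating $\bx$ subject to the semi-algebraic constraints $\bx\in U$ and $\psi(\bx)=r$, shows that $\theta$ is a semi-algebraic function of the single variable $r$.

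The payoff of reducing to one variable is the monotonicity theorem for semi-algebraic functions: on a sufficiently small interval $(0,\eta)$ the function $\theta$ is continuous and monotone. The possibility $\theta\equiv 0$ is excluded because a semi-algebraic function has only finitely many critical values, so after shrinking $\eta$ the interval $(0,\eta)$ contains no critical value of $\psi$; hence $\bm{0}_n\notin\partial\psi(\bx)$ whenever $0<\psi(\bx)<\eta$, forcing $\theta>0$ there. The Puiseux description of one-variable semi-algebraic functions then gives a lower bound $\theta(r)\geqs c\,r^{\beta}$ for small $r>0$, with $c>0$ and a rational Łojasiewicz exponent $\beta\in[0,1)$. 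Taking $\varphi(s):=\frac{c^{-1}}{1-\beta}\,s^{1-\beta}$ produces a continuous concave function on $[0,\eta)$, continuously differentiable on $(0,\eta)$ with $\varphi'(s)=c^{-1}s^{-\beta}>0$, and since $\dist(0,\partial\psi(\bx))\geqs\theta(\psi(\bx))$ on each level set we obtain $\varphi'(\psi(\bx))\cdot\dist(0,\partial\psi(\bx))\geqs \varphi'(\psi(\bx))\,\theta(\psi(\bx))\geqs 1$ on $U\cap\{0<\psi<\eta\}$. This is precisely the KL property at $\hat{\bx}$.

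The main obstacle is the first structural step: verifying that the limiting subdifferential of a semi-algebraic function again has semi-algebraic graph, and that the infimal operations defining $\dist(0,\partial\psi(\cdot))$ and $\theta$ remain within the semi-algebraic category. This is where the full strength of o-minimality (closure under projection, i.e.\ quantifier elimination) and the finiteness of critical values are indispensable; an equivalent route proceeds through a Whitney stratification of $\psi$ into smooth pieces on each of which a classical smooth Łojasiewicz gradient inequality holds, and then glues them. Once definability of the one-variable function $\theta$ is secured, the remaining analysis---monotonicity, the power-type lower bound, and the construction of $\varphi$---is routine one-variable semi-algebraic geometry. I would therefore invoke the definability of the subdifferential from the stratification results underlying \cite{attouch2013convergence,bolte2014proximal} rather than re-deriving it, and concentrate the argument on the reduction to $\theta$ and the construction of $\varphi$.
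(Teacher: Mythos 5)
The paper does not prove this lemma at all: it is imported verbatim as a known result from \cite{attouch2013convergence} and \cite{bolte2014proximal}, which in turn rest on the stratification theory of Bolte, Daniilidis, Lewis and Shiota. Your proposal therefore does strictly more than the paper -- it reconstructs the standard argument behind that citation: definability of the graph of $\partial\psi$, reduction to the one-variable ``talweg'' function $\theta(r)=\inf\{\dist(0,\partial\psi(\bx)):\bx\in U,\ \psi(\bx)=r\}$ via quantifier elimination, the monotonicity/Puiseux theorem to get $\theta(r)\geqs c\,r^{\beta}$ with $\beta\in[0,1)$, and $\varphi(s)=\frac{c^{-1}}{1-\beta}s^{1-\beta}$ as the desingularizing function. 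This is the correct architecture, and you are right to flag that the genuinely hard inputs (semi-algebraicity of the limiting subdifferential and the nonsmooth Sard theorem) should be quoted rather than re-derived.

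One step is too quick, though: the passage from ``$(0,\eta)$ contains no critical value of $\psi$'' to ``$\theta>0$ on $(0,\eta)$''. Absence of critical points on a level set gives $\dist(0,\partial\psi(\bx))>0$ pointwise, but the infimum over the level set can still vanish, since the level set need not be compact inside $U$ (for an lsc $\psi$ it need not even be closed) and $\bx\mapsto\dist(0,\partial\psi(\bx))$ is not lower semicontinuous. The monotonicity theorem only tells you $\theta$ is eventually monotone and of one sign near $0$; it does not by itself rule out $\theta\equiv 0$ on $(0,\eta)$. The standard way to close this is exactly the alternative route you mention in passing: take a Whitney stratification compatible with $\psi$, use the projection formula $\dist(0,\partial\psi(\bx))\geqs\|\nabla(\psi|_M)(\bx)\|$ on each stratum $M$, apply the classical smooth {\L}ojasiewicz gradient inequality stratum by stratum, and take the worst exponent over the finitely many strata. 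If you promote that from an aside to the actual mechanism for bounding $\theta$ from below, the sketch becomes a complete (if heavily outsourced) proof; as written, the finitely-many-critical-values argument does not quite deliver the positivity you need.
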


\begin{proposition}\label{prop_Fsemi}
Function $F$ defined in \eqref{mod:fracfg2} is semi-algebraic.
\end{proposition}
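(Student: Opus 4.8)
The plan is to express $F$ as a combination of semi-algebraic building blocks and then invoke the closure properties collected in Lemma \ref{lem:semialgeproper}. The convenient form to work with is the additive decomposition
\[
F(\bx) = \frac{\tilde f(\bx)}{g(\bx)} + \iota_\Omega(\bx),
\]
which agrees with the definition in \eqref{mod:fracfg2}: on $\Omega$ the indicator vanishes and $F = \tilde f/g$, while off $\Omega$ the indicator forces $F = +\infty$, so that $\dom F = \Omega$ exactly as already used in the proof of Proposition \ref{prop:critpoint}. Since semi-algebraicity of a function is a statement purely about its graph, it suffices to show that each summand is semi-algebraic and that the class is closed under the operations involved.

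Then I would run through the following chain, each step justified by Lemma \ref{lem:semialgeproper}. First, because $\Omega$ is semi-algebraic, its indicator $\iota_\Omega$ is semi-algebraic by part $(ii)$. Next, $f$ and $g$ are semi-algebraic by Assumption 1, and since the constant $M$ is a (degree-zero) polynomial, $Mg$ and hence $\tilde f = f - Mg$ are semi-algebraic by part $(i)$. Because $g(\bx)>0$ on $\Omega$, the function $g$ is in particular nonvanishing there, so $1/g$ is semi-algebraic by part $(iii)$; the product $\tilde f \cdot (1/g) = \tilde f/g$ is then semi-algebraic by part $(i)$. Finally, the sum $\tilde f/g + \iota_\Omega = F$ is semi-algebraic, again by part $(i)$.

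The one point that demands care --- and the only real obstacle --- is the bookkeeping of domains and extended-real values, since part $(iii)$ yields semi-algebraicity of $1/g$ only where $g\neq 0$. I would handle this by restricting attention to $\Omega$ (equivalently, to $\{g\neq 0\}\supseteq\Omega$), where the positivity $g>0$ both guarantees that $\tilde f/g$ is finite and verifies the hypothesis of part $(iii)$; the indicator $\iota_\Omega$ then simultaneously encodes the restriction of the domain to $\Omega$ and assigns the value $+\infty$ outside it. I would also note explicitly why the additive decomposition is preferable to the literal quotient $(\tilde f + \iota_\Omega)/g$: read as a product with $1/g$, the latter would misbehave at points where $g<0$ outside $\Omega$ (producing $-\infty$ rather than $+\infty$), whereas with the additive form the graph of $F$ coincides with $\{(\bx,y)\,|\,\bx\in\Omega,\ y=\tilde f(\bx)/g(\bx)\}$, which is manifestly a semi-algebraic subset of $\bbR^{n+1}$.
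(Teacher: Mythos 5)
Your proof is correct and follows essentially the same route as the paper: both reduce the claim to the semi-algebraicity of $\tilde f$, $\iota_\Omega$, and $1/g$ via items $(i)$--$(iii)$ of Lemma \ref{lem:semialgeproper}. Your extra care with the extended-real bookkeeping (preferring the additive decomposition $\tilde f/g+\iota_\Omega$ so that the graph is unambiguously $\{(\bx,y)\,|\,\bx\in\Omega,\ y=\tilde f(\bx)/g(\bx)\}$) is a refinement the paper's terser argument leaves implicit, but it does not change the underlying approach.
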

\begin{proof}
We recall from Assumption 1 that $f$ and $g$ are both semi-algebraic on $\Omega$, where $\Omega$ is a semi-algebraic set. Then we know from item $(i)$ in Lemma \ref{lem:semialgeproper} that $\tilde{f}$ is also semi-algebraic. According to the definition of $F$, to prove that $F$ is semi-algebraic, it suffices to show that $1/g$ is semi-algebraic, which can be obtained from item $(ii)$ of Lemma \ref{lem:semialgeproper} directly. This completes the proof.
\end{proof}

We then have the following proposition.
\begin{proposition}\label{prop_critical}
Let $\{\bx^k\}_{k\in\bbN}$ be generated by PGA, where $\alpha\in\left(0,\frac{1}{L_{\nabla\tilde{f}}}\right)$. Then any accumulation point of $\{\bx^k\}_{k\in\bbN}$ is a critical point of $F$. Moreover, $F$ satisfies the KL property at any critical point.
\end{proposition}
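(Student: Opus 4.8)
The plan is to treat the two assertions separately, disposing of the KL claim first since it follows almost immediately from the machinery already assembled. By Proposition~\ref{prop_Fsemi}, $F$ is semi-algebraic; moreover $F$ is proper (it is finite and nonnegative on the nonempty set $\Omega=\dom F$ by Fact~1) and lower semicontinuous (being continuous on the closed set $\Omega$ and equal to $+\infty$ off $\Omega$, with $\iota_{\Omega}$ lower semicontinuous). Hence Lemma~\ref{lem:semialgeKL} applies and guarantees that $F$ satisfies the KL property at every point of $\dom\partial F$. If $\bx^*$ is a critical point of $F$, then by Definition~\ref{subdifferentials} we have $\bm{0}_n\in\partial F(\bx^*)$, so $\partial F(\bx^*)\neq\varnothing$ and $\bx^*\in\dom\partial F$; the KL property at $\bx^*$ then follows.

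For the first assertion, I would let $\bx^*$ be an arbitrary accumulation point of $\{\bx^k\}_{k\in\bbN}$ (at least one exists by the boundedness in item $(ii)$ of Proposition~\ref{prop_fdescend}) and fix a subsequence $\{\bx^{k_j}\}_{j\in\bbN_+}$ with $\bx^{k_j}\to\bx^*$. Since every iterate lies in the closed set $\Omega$, we get $\bx^*\in\Omega$. The key idea is to pass to the limit along the \emph{shifted} subsequence $\{\bx^{k_j+1}\}$: by item $(v)$ of Proposition~\ref{prop_fdescend}, $\|\bx^{k_j+1}-\bx^{k_j}\|_2\to0$, so $\bx^{k_j+1}\to\bx^*$ as well, and the continuity of $F$ on $\Omega$ yields $F(\bx^{k_j+1})\to F(\bx^*)$. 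Proposition~\ref{prop_qkparial} then supplies vectors $\bq^{k_j+1}\in\partial F(\bx^{k_j+1})$ with $\|\bq^{k_j+1}\|_2\leqs b\|\bx^{k_j+1}-\bx^{k_j}\|_2\to0$, hence $\bq^{k_j+1}\to\bm{0}_n$.

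To conclude $\bm{0}_n\in\partial F(\bx^*)$ I would invoke the closedness (robustness) of the limiting subdifferential under $F$-attentive convergence, namely that $\bx^m\to\bx$, $F(\bx^m)\to F(\bx)$, $\bu^m\to\bu$, and $\bu^m\in\partial F(\bx^m)$ together force $\bu\in\partial F(\bx)$; this is a standard property of the Mordukhovich subdifferential available from \cite{rockafellar2009variational,mordukhovich2018variational}. Applying it with $\bx^m:=\bx^{k_j+1}$, $\bu^m:=\bq^{k_j+1}$, $\bx:=\bx^*$, and $\bu:=\bm{0}_n$ gives $\bm{0}_n\in\partial F(\bx^*)$, i.e., $\bx^*$ is a critical point of $F$.

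The main obstacle is the verification of the $F$-attentive convergence requirement $F(\bx^{k_j+1})\to F(\bx^*)$, which is genuinely needed because the elements $\bq^{k_j+1}$ lie in the limiting rather than the Fr\'echet subdifferential; the closedness lemma is not valid under plain convergence $\bx^{k_j+1}\to\bx^*$ alone. Establishing it rests on two facts already in hand: that all iterates remain in $\Omega$ (so the continuity of $F$ \emph{on $\Omega$} can be applied to the shifted subsequence) and that the displacement $\|\bx^{k_j+1}-\bx^{k_j}\|_2$ vanishes. Everything else is routine bookkeeping over subsequences together with the already-proven bounds of Propositions~\ref{prop_fdescend} and~\ref{prop_qkparial}.
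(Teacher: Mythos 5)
Your argument is correct, but it reaches the critical-point conclusion by a genuinely different route than the paper. The paper does not use Proposition~\ref{prop_qkparial} here at all: it writes the iteration $\bx^{k_j}=\prox_{\iota_{\Omega}}\bigl(\bx^{k_j-1}-\alpha\nabla\tilde{f}(\bx^{k_j-1})+\alpha c^{k_j-1}\nabla g(\bx^{k_j-1})\bigr)$ along the (backward-shifted) subsequence, notes that $\bx^{k_j-1}\to\bx^*$ by item $(v)$ of Proposition~\ref{prop_fdescend}, and passes to the limit using the nonexpansiveness of $\prox_{\iota_{\Omega}}$ together with the Lipschitz continuity of $\nabla\tilde{f}$ and $\nabla g$ (and the continuity of $\tilde f/g$, so $c^{k_j-1}\to c^*$); this yields the fixed-point identity \eqref{eq_proxchar}, and Proposition~\ref{prop:proxcrit} then delivers criticality. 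Your route instead shifts forward to $\bx^{k_j+1}$, takes the vanishing subgradients $\bq^{k_j+1}\in\partial F(\bx^{k_j+1})$ from Proposition~\ref{prop_qkparial}, and closes the argument with the outer semicontinuity of the limiting subdifferential under $F$-attentive convergence --- a standard fact from \cite{rockafellar2009variational}, and indeed the mechanism inside the proof of Proposition~\ref{prop:forconver}, but one the paper never states as a lemma; you correctly identify that the $F$-attentive condition $F(\bx^{k_j+1})\to F(\bx^*)$ is the point that must be checked and you check it via continuity of $F$ relative to $\Omega$. The trade-off: the paper's argument is self-contained given its own Proposition~\ref{prop:proxcrit} and lands directly on the prox characterization that later feeds the global-optimality claim, while yours reuses the subgradient bound already proved and imports one external (but entirely standard) closedness property. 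Your treatment of the KL assertion coincides with the paper's (Proposition~\ref{prop_Fsemi} plus Lemma~\ref{lem:semialgeKL}), and your justification that a critical point lies in $\dom\partial F$ because $\bm{0}_n\in\partial F(\bx^*)$ is, if anything, slightly cleaner than the paper's appeal to $\bx^*\in\Omega$.
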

\begin{proof}
Let $\bx^*$ be an accumulation point of $\{\bx^k\}_{k\in\bbN}$. Then there exists a subsequence $\{\bx^{k_j}\}_{j\in\bbN_+}$ of $\{\bx^{k}\}_{k\in\bbN}$ such that $\lim\limits_{j\to\infty}\bx^{k_j}=\bx^*$.
Note that $\{\bx^k\}_{k\in\bbN}\subset\Omega$ and $\Omega$ is a closed set. We have $\bx^*\in\Omega$. We first prove that $\bx^*$ is a critical point of $F$. According to Proposition \ref{prop:proxcrit}, it suffices to show that \eqref{eq_proxchar} holds, where $c^*:=F(\bx^*)=\frac{\tilde{f}(\bx^*)}{g(\bx^*)}$. We recall from \eqref{PGAiter} that for all $j\in\bbN_+$,
\begin{equation}\label{eq_wkprox}
\bx^{k_j}=\prox_{\iota_{\Omega}}\left(\bx^{k_j-1}-\alpha\nabla\tilde{f}(\bx^{k_j-1})+\alpha c^{k_j-1}\nabla g(\bx^{k_j-1})\right),
\end{equation}
where $c^{k_j-1}=\frac{\tilde{f}\left(\bx^{k_j-1}\right)}{g\left(\bx^{k_j-1}\right)}$. Item $(v)$ of Proposition \ref{prop_fdescend} gives that $\lim\limits_{k\to\infty}\|\bx^{k+1}-\bx^k\|_2=0$. Hence
$$
\lim\limits_{k\to\infty}\|\bx^{k_j-1}-\bx^*\|_2\leqs\lim\limits_{j\to\infty}\|\bx^{k_j-1}-\bx^{k_j}\|_2+\lim\limits_{k\to\infty}\|\bx^{k_j}-\bx^*\|_2=0,
$$
which implies that $\lim\limits_{k\to\infty}\bx^{k_j-1}=\bx^*$. Since $\iota_{\Omega}$ is convex, it follows from Lemma 2.4 of \cite{combettes2005signal} that operator $\prox_{\iota_{\Omega}}$ is firmly nonexpansive, and hence nonexpansive, that is, $1$-Lipschitz continuous. Recall that $\nabla\tilde{f}$ and $\nabla g$ are also Lipschitz continuous. Then by letting $j\to\infty$ on both sides of \eqref{eq_wkprox}, we obtain \eqref{eq_proxchar} immediately. Thus, $\bx^*$ is a critical point of $F$.

We next prove that $F$ satisfies the KL property at $\bx^*$. Since $\bx^*\in\Omega$, we know that $\bx^*\in\dom\partial F$. Then the desired result follows from Proposition \ref{prop_Fsemi} and Lemma \ref{lem:semialgeKL} directly.
\end{proof}

We are now in a position to prove the convergence of PGA to a global minimizer of $F$.
\begin{theorem}\label{thm:globalmin}
Let $\{\bx^k\}_{k\in\bbN}$ be generated by PGA, where $\alpha\in\left(0,\frac{1}{L_{\nabla\tilde{f}}}\right)$. Then $\{\bx^k\}_{k\in\bbN}$ converges to a critical point $\bx^*\in\Omega$ of $F$. Moreover, if $f(\bx^*)\leqs0$, then $\bx^*$ is a globally optimal solution of model \eqref{mod:fracfg2}, that is, a globally optimal solution of model \eqref{mod:fracfg1}.
\end{theorem}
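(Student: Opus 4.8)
The plan is to verify that the pair consisting of the sequence $\{\bx^k\}_{k\in\bbN}$ and the function $\psi:=F$ satisfies all three hypotheses of Proposition~\ref{prop:forconver}, then to read off its two conclusions, and finally to upgrade the resulting critical point to a global minimizer through Proposition~\ref{prop:proxcrit}. Essentially all of the analytic labor has been front-loaded into the preceding propositions, so the task here is one of assembly.

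First I would collect the three conditions of Proposition~\ref{prop:forconver}, each already prepared. Condition~$(i)$ is exactly item~$(iv)$ of Proposition~\ref{prop_fdescend}, which supplies $a\in\bbR_+$ with $F(\bx^{k+1})+a\|\bx^{k+1}-\bx^k\|_2^2\leqs F(\bx^k)$ for all $k\in\bbN$; the restriction $\alpha\in(0,1/L_{\nabla\tilde{f}})$ is precisely what guarantees $a>0$. Condition~$(ii)$ is the content of Proposition~\ref{prop_qkparial}, which produces $\bq^{k+1}\in\partial F(\bx^{k+1})$ and $b\in\bbR_+$ with $\|\bq^{k+1}\|_2\leqs b\|\bx^{k+1}-\bx^k\|_2$. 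Condition~$(iii)$ is Proposition~\ref{prop:wklimpoint}, furnishing a subsequence $\{\bx^{k_j}\}_{j\in\bbN_+}$ converging to some $\bx^*\in\Omega$ with $F(\bx^{k_j})\to F(\bx^*)$.

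Next I would discharge the KL requirement of Proposition~\ref{prop:forconver} at this $\bx^*$. Proposition~\ref{prop_Fsemi} shows that $F$ is semi-algebraic, and since $\bx^*\in\Omega=\dom F\subset\dom\partial F$, Lemma~\ref{lem:semialgeKL} yields the KL property of $F$ at $\bx^*$ (this is also recorded in Proposition~\ref{prop_critical}). With every hypothesis in place, Proposition~\ref{prop:forconver} gives simultaneously $\lim_{k\to\infty}\bx^k=\bx^*$ and $\bm{0}_n\in\partial F(\bx^*)$. Because $\{\bx^k\}_{k\in\bbN}\subset\Omega$ and $\Omega$ is closed, the limit $\bx^*$ lies in $\Omega$ and is a critical point of $F$, which proves the first assertion of the theorem.

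Finally, for the \emph{moreover} clause I would invoke the global-optimality half of Proposition~\ref{prop:proxcrit}. Since the \emph{entire} sequence converges to $\bx^*$, this limit is in particular an accumulation point, and the argument in the proof of Proposition~\ref{prop_critical} — passing to the limit in the proximal recursion~\eqref{PGAiter} via the nonexpansiveness of $\prox_{\iota_{\Omega}}$ and the continuity of $\nabla\tilde{f}$ and $\nabla g$ — shows that $\bx^*$ satisfies the fixed-point characterization~\eqref{eq_proxchar} with $c^*:=F(\bx^*)$. The second statement of Proposition~\ref{prop:proxcrit} then applies directly: the hypothesis $f(\bx^*)\leqs0$ forces $\bx^*$ to be a globally optimal solution of \eqref{mod:fracfg2}, and hence, by the equivalence of \eqref{mod:fracfg1} and \eqref{mod:fracfg2}, of \eqref{mod:fracfg1} as well. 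The only step warranting care is ensuring that the limit point, rather than merely a subsequential limit, satisfies \eqref{eq_proxchar}; this is exactly what the full-sequence convergence delivered by the KL machinery guarantees, so I anticipate no genuine obstacle.
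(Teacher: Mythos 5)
Your proposal is correct and follows essentially the same route as the paper: assembling Propositions~\ref{prop_fdescend}, \ref{prop_qkparial}, \ref{prop:wklimpoint}, and \ref{prop_critical} to invoke Proposition~\ref{prop:forconver}, and then applying Proposition~\ref{prop:proxcrit} for the global-optimality claim. Your extra care in checking that the limit point satisfies the fixed-point characterization~\eqref{eq_proxchar} (via the argument of Proposition~\ref{prop_critical}) is a welcome explicitness that the paper leaves implicit, but it is not a departure from the paper's argument.
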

\begin{proof}
According to Proposition \ref{prop_fdescend}, \ref{prop_qkparial}, \ref{prop:wklimpoint} and \ref{prop_critical}, the convergence of $\{\bx^k\}_{k\in\bbN}$ to a critical point $\bx^*\in\Omega$ of $F$ follows from Proposition \ref{prop:forconver} immediately. If we further have $f(\bx^*)\leqs0$, then the global optimality of $\bx^*$ can be obtained from Proposition \ref{prop:proxcrit} directly.
\end{proof}

We then investigate the convergence rate of PGA according to the KL property of the objective function. For this purpose, we recall the following lemma from the proof of Theorem 5 in \cite{attouch2009convergence}.
\begin{lemma}\label{lem:forconvrate1}
Let $\gamma\in\bbR_+$, $\{a_k\}_{k\in\bbN}\subset\bbR_+$ be a monotonically decreasing sequence such that $\lim_{k\to\infty}a_k=0$. Suppose that there exist $\mu_1\in[0,+\infty)$, $\mu_2\in\bbR_+$ and $K'\in\bbN$ such that
\begin{equation}\label{neq:akreak}
a_k\leqs\mu_1(a_{k-1}-a_k)+\mu_2(a_{k-1}-a_k)^{\gamma},\ \ \mbox{for all}\ k\geqs K'.
\end{equation}
Then the following hold:
\begin{itemize}
\item[$(i)$] If $\gamma\in\left[1,+\infty\right)$, then there exist $\mu'>0$, $\tau\in[0,1)$ and $K\geqs K'$ such that
    $$
    a_k\leqs\mu'\tau^k,\ \ \mbox{for all}\ k\geqs K.
    $$
\item[$(ii)$] If $\gamma\in\left(0,1\right)$, then there exist $\mu'>0$ and $K\geqs K'$ such that
    $$
    a_k\leqs \mu'k^{-\frac{\gamma}{1-\gamma}},\ \ \mbox{for all}\ k\geqs K.
    $$
\end{itemize}
\end{lemma}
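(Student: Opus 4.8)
The plan is to exploit $a_k\to 0$ to collapse the two-term recursion \eqref{neq:akreak} into a single-power recursion, and then analyze the regimes $\gamma\geqs 1$ and $\gamma\in(0,1)$ separately. Set $d_k:=a_{k-1}-a_k\geqs 0$; since $\{a_k\}$ is monotone and convergent, $d_k\to 0$, so there is $K_0\geqs K'$ with $d_k\leqs 1$ for all $k\geqs K_0$. For such $k$ the elementary comparison $d_k^{\gamma}\leqs d_k$ (when $\gamma\geqs 1$) or $d_k\leqs d_k^{\gamma}$ (when $\gamma\in(0,1)$) lets me bound the right-hand side of \eqref{neq:akreak} by a single power of $d_k$.

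For item $(i)$, this gives $a_k\leqs(\mu_1+\mu_2)(a_{k-1}-a_k)$ for $k\geqs K_0$, which rearranges to $a_k\leqs\tau a_{k-1}$ with $\tau:=\frac{\mu_1+\mu_2}{1+\mu_1+\mu_2}\in(0,1)$ (note $\tau>0$ since $\mu_2>0$). Iterating from $K_0$ yields $a_k\leqs a_{K_0}\tau^{-K_0}\cdot\tau^{k}$, the desired geometric bound with $\mu':=a_{K_0}\tau^{-K_0}$ and $K:=K_0$.

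For item $(ii)$, the same reduction gives $a_k\leqs(\mu_1+\mu_2)(a_{k-1}-a_k)^{\gamma}$, i.e.\ $a_{k-1}-a_k\geqs C a_k^{\theta}$ with $\theta:=1/\gamma>1$ and $C:=(\mu_1+\mu_2)^{-1/\gamma}$. Writing $r:=\theta-1=\frac{1-\gamma}{\gamma}>0$ (so $\frac{1}{r}=\frac{\gamma}{1-\gamma}$), I would bound the increments of $k\mapsto a_k^{-r}$ below by a positive constant via a dichotomy: since $a_k^{-r}-a_{k-1}^{-r}=r\int_{a_k}^{a_{k-1}}s^{-\theta}\,ds\geqs r\,a_{k-1}^{-\theta}(a_{k-1}-a_k)\geqs rC\,(a_k/a_{k-1})^{\theta}$, the case $a_{k-1}\leqs 2^{1/\theta}a_k$ forces $(a_k/a_{k-1})^{\theta}\geqs\frac{1}{2}$ and hence $a_k^{-r}-a_{k-1}^{-r}\geqs rC/2$; in the complementary case $a_k<2^{-1/\theta}a_{k-1}$ one instead has $a_k^{-r}-a_{k-1}^{-r}\geqs(2^{r/\theta}-1)a_{k-1}^{-r}\geqs 2^{r/\theta}-1$ once $a_{k-1}\leqs 1$. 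Thus $a_k^{-r}-a_{k-1}^{-r}\geqs\rho:=\min\{rC/2,\,2^{r/\theta}-1\}>0$ for all large $k$; telescoping gives $a_k^{-r}\geqs\rho(k-K+1)$, whence $a_k\leqs\mu' k^{-\gamma/(1-\gamma)}$ for suitable $\mu'$ and $K\geqs K'$.

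I expect the routine parts --- the power comparisons for $d_k$, the geometric iteration in $(i)$, and the final telescoping in $(ii)$ --- to be mechanical. The one genuinely delicate step is item $(ii)$: the naive estimate for $a_k^{-r}-a_{k-1}^{-r}$ unavoidably mixes $a_k$ and $a_{k-1}$, so without splitting into the ``comparable'' and ``large-drop'' cases one cannot extract a uniform positive lower bound on the increments. Verifying that dichotomy, together with checking that the threshold $a_{k-1}\leqs 1$ used in the large-drop case is eventually met (which holds because $a_k\to 0$), is where the real work lies; this is precisely the {\L}ojasiewicz-type rate argument underlying Theorem 5 of \cite{attouch2009convergence}.
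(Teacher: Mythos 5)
Your proof is correct. Note that the paper itself supplies no proof of this lemma---it is recalled verbatim from the proof of Theorem 5 in \cite{attouch2009convergence}---so there is nothing internal to compare against; your argument is a faithful, self-contained reconstruction of the standard {\L}ojasiewicz-rate estimate from that reference. The reduction to a single power of $d_k:=a_{k-1}-a_k$ via $d_k\leqs 1$, the geometric contraction $a_k\leqs\frac{\mu_1+\mu_2}{1+\mu_1+\mu_2}\,a_{k-1}$ in item $(i)$, and in item $(ii)$ the dichotomy between the ``comparable'' case (where $a_k^{-r}-a_{k-1}^{-r}\geqs rC(a_k/a_{k-1})^{\theta}\geqs rC/2$) and the ``large-drop'' case (where the jump in $a^{-r}$ alone gives a uniform lower bound once $a_{k-1}\leqs 1$) all check out, and both thresholds are eventually met since $a_k\to 0$. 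The telescoping $a_k^{-r}\geqs\rho(k-K+1)$ then yields the stated rate with $1/r=\gamma/(1-\gamma)$.
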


In the following theorem, we present a convergence rate result of PGA based on the analysis of convergence rate in \cite{attouch2009convergence}. We remark that in addition to the convergence rate of $\|\bx^k-\bx^*\|_2$, we also provide a convergence rate result of $F(\bx^k)-F(\bx^*)$ here, which is not given in \cite{attouch2009convergence}.

\begin{theorem}
Let $F$ be defined in \eqref{mod:fracfg2}, $\{\bx^k\}_{k\in\bbN}$ be generated by PGA, where $\alpha\in\left(0,\frac{1}{L_{\nabla\tilde{f}}}\right)$. Assume that $\{\bx^k\}_{k\in\bbN}$ converges to a critical point $\bx^*$ of $F$, and that $F$ has the KL property at $\bx^*$ with $\varphi(s):=\mu s^{(1-\theta)}$, $\mu>0$. Then the following estimations hold:
\begin{itemize}
\item[$(i)$] If $\theta=0$, then the sequence $\{\bx^k\}_{k\in\bbN}$ converges in a finite number of steps.
\item[$(ii)$] If $\theta\in\left(0,\frac{1}{2}\right]$, then there exist $\tilde{\mu}_1,\tilde{\mu}_2\in\bbR_+$, $\tau_1,\tau_2\in[0,1)$ and $K\in\bbN$ such that
$$
\|\bx^k-\bx^*\|_2\leqs\tilde{\mu}_1\tau_1^k\ \ \mbox{and}\ \ F(\bx^k)-F(\bx^*)\leqs\tilde{\mu}_2\tau_2^k,\ \ \mbox{for all}\ k\geqs K.
$$
\item[$(iii)$] If $\theta\in\left(\frac{1}{2},1\right)$, then there exist $\tilde{\mu}_1,\tilde{\mu}_2\in\bbR_+$ and $K\in\bbN$ such that
$$
\|\bx^k-\bx^*\|_2\leqs \tilde{\mu}_{1}k^{-\frac{1-\theta}{2\theta-1}}\ \ \mbox{and}\ \ F(\bx^k)-F(\bx^*)\leqs\tilde{\mu}_2k^{-\frac{1}{2\theta-1}},\ \ \mbox{for all}\ k\geqs K.
$$
\end{itemize}
\end{theorem}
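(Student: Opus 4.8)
The plan is to reduce both estimations to the abstract recursion of Lemma~\ref{lem:forconvrate1}, fed by the three structural facts already established for PGA: the sufficient decrease $F(\bx^{k+1})+a\|\bx^{k+1}-\bx^k\|_2^2\leqs F(\bx^k)$ of Proposition~\ref{prop_fdescend}$(iv)$, the subgradient bound $\dist(0,\partial F(\bx^{k+1}))\leqs b\|\bx^{k+1}-\bx^k\|_2$ of Proposition~\ref{prop_qkparial}, and the KL inequality at $\bx^*$ with $\varphi(s)=\mu s^{1-\theta}$. Write $r_k:=F(\bx^k)-F(\bx^*)$ and $d_k:=\|\bx^{k+1}-\bx^k\|_2$; since $\bx^k\to\bx^*$ and $F$ is continuous on $\Omega$, the sequence $\{r_k\}_{k\in\bbN}$ is nonnegative, monotonically decreasing and tends to $0$. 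I first record a dichotomy: if $r_K=0$ for some $K$, monotonicity gives $r_k=0$ for all $k\geqs K$, whence $a\,d_k^2\leqs r_k-r_{k+1}=0$ forces $\bx^{k+1}=\bx^k$ and PGA terminates in finitely many steps. For $\theta=0$ this always occurs: there $\varphi'\equiv\mu$, so the KL inequality reads $\mu\,\dist(0,\partial F(\bx^k))\geqs1$ on its region of validity, while $\dist(0,\partial F(\bx^k))\leqs b\,d_{k-1}\to0$; the two are incompatible for large $k$ unless $r_k=0$, giving part $(i)$. In the remaining cases I may thus assume $r_k>0$ for all $k$, so that for $k$ large the iterate lies in the KL neighborhood and the KL inequality applies.

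I would next derive the master recursion for the function values. Writing the KL inequality at $\bx^k$ as $\mu(1-\theta)r_k^{-\theta}\dist(0,\partial F(\bx^k))\geqs1$ and substituting $\dist(0,\partial F(\bx^k))\leqs b\,d_{k-1}$ yields $r_k^{\theta}\leqs\mu(1-\theta)b\,d_{k-1}$; squaring and inserting $a\,d_{k-1}^2\leqs r_{k-1}-r_k$ produces $r_k^{2\theta}\leqs C^2(r_{k-1}-r_k)$ with $C:=\mu(1-\theta)b/\sqrt{a}$, that is, $r_k\leqs C^{1/\theta}(r_{k-1}-r_k)^{1/(2\theta)}$. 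This is precisely the hypothesis of Lemma~\ref{lem:forconvrate1} with $\mu_1=0$, $\mu_2=C^{1/\theta}$ and $\gamma=\tfrac{1}{2\theta}$. When $\theta\in(0,\tfrac12]$ one has $\gamma\geqs1$, and part $(i)$ of the lemma gives the geometric bound $r_k\leqs\tilde{\mu}_2\tau_2^k$; when $\theta\in(\tfrac12,1)$ one has $\gamma\in(0,1)$, and part $(ii)$ gives $r_k\leqs\tilde{\mu}_2 k^{-\gamma/(1-\gamma)}=\tilde{\mu}_2 k^{-1/(2\theta-1)}$. These are exactly the claimed rates for $F(\bx^k)-F(\bx^*)$.

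Finally I would transfer these rates to $\|\bx^k-\bx^*\|_2$ via the tail length $S_m:=\sum_{j\geqs m}d_j$, for which $\|\bx^m-\bx^*\|_2\leqs S_m$. Concavity of $\varphi$ gives $\varphi(r_k)-\varphi(r_{k+1})\geqs\varphi'(r_k)(r_k-r_{k+1})$, and combining $\varphi'(r_k)\geqs 1/(b\,d_{k-1})$ with $r_k-r_{k+1}\geqs a\,d_k^2$ leads to $2d_k\leqs d_{k-1}+\tfrac{b}{a}\bigl(\varphi(r_k)-\varphi(r_{k+1})\bigr)$. Summing this telescoping inequality from $m$ to infinity collapses the displacement and $\varphi$-difference sums to $S_m\leqs d_{m-1}+\tfrac{b}{a}\varphi(r_m)$. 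Since $\varphi(r_m)=\mu r_m^{1-\theta}$ and $d_{m-1}\leqs\sqrt{r_{m-1}/a}$, I substitute the function-value rates: for $\theta\in(0,\tfrac12]$ both terms decay geometrically, giving $\|\bx^m-\bx^*\|_2\leqs\tilde{\mu}_1\tau_1^m$; for $\theta\in(\tfrac12,1)$ a comparison of exponents, using $1-\theta<\tfrac12$ to see that $\varphi(r_m)$ decays more slowly than $d_{m-1}$, yields $\|\bx^m-\bx^*\|_2\leqs\tilde{\mu}_1 m^{-(1-\theta)/(2\theta-1)}$.

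The main obstacle is assembling the master recursion with matching indices and making the exponent bookkeeping $\gamma=1/(2\theta)$ and $\gamma/(1-\gamma)=1/(2\theta-1)$ line up with the two regimes $\theta\leqs\tfrac12$ and $\theta>\tfrac12$. The telescoping that yields $S_m\leqs d_{m-1}+\tfrac{b}{a}\varphi(r_m)$, together with the exponent comparison ensuring that the $\varphi(r_m)$ term (not $d_{m-1}$) governs the polynomial iterate rate in part $(iii)$, is the only delicate bookkeeping; the rest is a direct appeal to Lemma~\ref{lem:forconvrate1} and the previously established properties of PGA.
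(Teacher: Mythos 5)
Your proposal is correct and follows essentially the same route as the paper: the same three ingredients (sufficient decrease, the subgradient bound from Proposition~\ref{prop_qkparial}, and the KL inequality) feed the same master recursion $r_k\leqs C^{1/\theta}(r_{k-1}-r_k)^{1/(2\theta)}$ for the function values, and the same AM--GM telescoping yields $S_m\leqs d_{m-1}+\tfrac{b}{a}\varphi(r_m)$ for the iterates. The only (harmless) divergence is in the last step: the paper bounds $\varphi(l_k)$ by a power of $s_{k-1}-s_k$ and applies Lemma~\ref{lem:forconvrate1} to $s_k$ with $\gamma=\tfrac{1-\theta}{\theta}$, whereas you substitute the already-established function-value rate into $S_m\leqs d_{m-1}+\tfrac{b}{a}\varphi(r_m)$ and compare exponents; both give the stated rates.
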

\begin{proof}
We first prove item $(i)$. If $\{F(\bx^k)\}_{k\in\bbN}$ is stationary, then so is $\{\bx^k\}_{k\in\bbN}$ in view of item $(iv)$ of Proposition \ref{prop_fdescend}.  Assume that $\{F(\bx^k)\}_{k\in\bbN}$ is not stationary. Since $\theta=0$, the KL inequality in item $(iii)$ of Definition \ref{def:KLproperty} yields that for any sufficiently large $k$, $\mu\cdot\dist({\bm0},\partial F(\bx^k))\geqs1$, contradicts the fact that $\bx^*$ is a critical point of $F$.

We then prove item $(ii)$ and $(iii)$. Let $s_k:=\sum_{i=k}^{\infty}\|\bx^{i+1}-\bx^{i}\|_2$, $k\in\bbN$ and $l_i:=F(\bx^i)-F(\bx^*)$, $i\in\bbN$. By using the triangle inequality, it is easy to see that $s_k\geqs\|\bx^{k}-\bx^*\|_2$ for $k\in\bbN$. To analyze the convergence rate of $\|\bx^k-\bx^*\|$, it suffices to analyze the convergence rate of $s_k$. For $\theta\in(0,1)$, the KL inequality yields that there exists $K_1\in\bbN$ such that $\mu(1-\theta)l_i^{-\theta}\cdot\dist({\bm0},\partial F(\bx^i))\geqs 1$, that is, $\mu(1-\theta)\cdot\dist({\bm0},\partial F(\bx^i))\geqs l_i^{\theta}$, for all $i\geqs K_1$. This together with Proposition \ref{prop_qkparial} implies that there exists $b>0$ such that
\begin{equation}\label{neq:rexgeqlk}
\mu b(1-\theta)\|\bx^{i}-\bx^{i-1}\|_2\geqs l_i^{\theta}.
\end{equation}
According to item $(iv)$ of Proposition \ref{prop_fdescend}, there exists $a>0$ such that
\begin{equation}\label{neq:rexisqreli}
a\|\bx^{i+1}-\bx^{i}\|_2^2\leqs l_{i}-l_{i+1}.
\end{equation}
By the definition of $\varphi$, we have $\varphi'(s)=\mu(1-\theta)s^{-\theta}>0$ for $s\in\bbR_+$. Then the concavity of $\varphi$ yields that $a\varphi'(l_i)\|\bx^{i+1}-\bx^{i}\|_2^2\leqs\varphi'(l_i)(l_{i}-l_{i+1})\leqs\varphi(l_i)-\varphi(l_{i+1})$, that is,
\begin{equation}\label{neq:rexleqphilk}
\mu a(1-\theta)\|\bx^{i+1}-\bx^{i}\|_2^2\leqs l_i^{\theta}\big(\varphi(l_i)-\varphi(l_{i+1})\big).
\end{equation}
Combining \eqref{neq:rexgeqlk} and \eqref{neq:rexleqphilk} gives that $\varphi(l_i)-\varphi(l_{i+1})\geqs\frac{a}{b}\cdot\frac{\|\bx^{i+1}-\bx^{i}\|_2^2}{\|\bx^{i}-\bx^{i-1}\|_2}$, that is,
$$
\left[\frac{b}{a}\big(\varphi(l_i)-\varphi(l_{i+1})\big)\right]^{\frac{1}{2}}\|\bx^{i}-\bx^{i-1}\|_2^{\frac{1}{2}}\geqs\|\bx^{i+1}-\bx^{i}\|_2.
$$
Then it follows from the arithmetic mean-geometric mean inequality that
\begin{equation}\label{neq:rephilkrex}
\frac{b}{a}\big(\varphi(l_i)-\varphi(l_{i+1})\big)+\|\bx^{i}-\bx^{i-1}\|_2\geqs2\|\bx^{i+1}-\bx^{i}\|_2.
\end{equation}
Summing both sides of inequality \eqref{neq:rephilkrex} for $i=k,k+1,\ldots,K'$, where $k\geqs K_1$, yields
$$
\frac{b}{a}\big(\varphi(l_k)-\varphi(l_{K'+1})\big)+\|\bx^{k}-\bx^{k-1}\|_2\geqs\sum_{i=k}^{K'}\|\bx^{i+1}-\bx^{i}\|_2+\|\bx^{K'+1}-\bx^{K'}\|_2.
$$
In view of the fact that $\varphi(l_{K'+1})\geqs0$, $\sum_{i=k}^{K'}\|\bx^{i+1}-\bx^{i}\|_2\leqs\|\bx^{k}-\bx^{k-1}\|_2+\frac{b}{a}\varphi(l_k)$. Letting $K'\to\infty$, we obtain
\begin{equation}\label{neq:Skbaphilk}
s_k\leqs(s_{k-1}-s_k)+\frac{b}{a}\varphi(l_k),\ \ \mbox{for all}\ k\geqs K_1.
\end{equation}
Then it follows from item $(v)$ of Proposition \ref{prop_fdescend} and the fact $\lim_{k\to\infty}l_k=0$ that $\lim_{k\to\infty}s_k=0$.

The definition of $\varphi$ together with \eqref{neq:rexgeqlk} yields that
\begin{equation}\label{neq:philkreSk}
\varphi(l_k)=\mu l_k^{1-\theta}\leqs\mu[\mu b(1-\theta)]^{\frac{1-\theta}{\theta}}(s_{k-1}-s_k)^{\frac{1-\theta}{\theta}}.
\end{equation}
Combining \eqref{neq:Skbaphilk} and \eqref{neq:philkreSk} yields $s_k\leqs(s_{k-1}-s_k)+\frac{b}{a}\mu[\mu b(1-\theta)]^{\frac{1-\theta}{\theta}}(s_{k-1}-s_k)^{\frac{1-\theta}{\theta}}$. Then the desired convergence rate results of $\|\bx^k-\bx^*\|$ in item $(ii)$ and item $(iii)$ follows from Lemma \ref{lem:forconvrate1} together with the fact $s_k\geqs\|\bx^{k}-\bx^*\|_2$.

Finally, we consider the convergence rate results of $F(\bx^k)-F(\bx^*)$. Combining \eqref{neq:rexgeqlk} and \eqref{neq:rexisqreli} yields $l_i\leqs\left[a^{-\frac{1}{2}}\mu b(1-\theta)\right]^{\frac{1}{\theta}}(l_{i-1}-l_i)^{\frac{1}{2\theta}}$ for all $i\geqs K_1$. Then the desired convergence rate results of $F(\bx^k)-F(\bx^*)$ in item $(ii)$ and item $(iii)$ follows from Lemma \ref{lem:forconvrate1} with $a_k=l_k$, $\mu_1=0$, $\mu_2=\left[a^{-\frac{1}{2}}\mu b(1-\theta)\right]^{\frac{1}{\theta}}$ and $\gamma=\frac{1}{2\theta}$.
\end{proof}

Before closing this section, we give an equivalent form of iteration \eqref{PGAiter} in PGA by using $f$ instead of $\tilde{f}$. According to the definition of $\tilde{f}$, it is easy to see that \eqref{PGAiter} is equivalent to
\begin{equation}\label{PGAiter2}
\bx^{k+1}=\prox_{\iota_{\Omega}}\left(\bx^k-\alpha\nabla f(\bx^k)+\alpha\frac{f(\bx^k)}{g(\bx^k)}\nabla g(\bx^k)\right).
\end{equation}

\section{Numerical examples}\label{sec:numerexam}
In the preceding section, we showed that PGA for solving a class of nonconvex fractional optimization problems can converge to a globally optimal solution. In this section, we will apply this notable result to the Sharpe Ratio (SR) maximization problem, which is a very important problem in the finance industry due to the SR being a crucial metric for evaluating the risk-adjusted returns. To further assess the effectiveness of PGA, we will then introduce two simple fractional optimization examples that possess closed-form solutions. The first example is a special case of the SR maximization, whereas the second example features an unbounded constraint set and a positive numerator, both of which contravene two prerequisites of Dinkelbach's method.

\subsection{Sharpe ratio maximization model}
We begin with outlining the SR maximization model. Consider a sample asset return matrix $\bR\in\bbR^{T\times N}$, which represents $T$ trading times and $N$ risky assets. For a given portfolio $\bw\in\bbRN$, the SR is defined by
\begin{equation}\label{eq:SRdef1}
S(\bw):=\frac{\frac{1}{T}\bm{1}_T^{\top}\bR\bw}{\sqrt{\frac{1}{T-1}\|\bR\bw-(\frac{1}{T}\bm{1}_T^{\top}\bR\bw)\bm{1}_T\|_2^2+\hat{\epsilon}\|\bw\|_2^2}},
\end{equation}
where $\hat{\epsilon}\|\bw\|_2^2$ serves as a regularization term for ensuring a positive definite quadratic form. The parameter $\hat{\epsilon}$ may be set to an arbitrarily small positive value, such that its impact on the denominator is negligible. For the sake of brevity, define $\bp:=\frac{1}{T}\bR^{\top}\bm{1}_T$ and $\bQ:=\frac{1}{\sqrt{T-1}}\left(\bR-\frac{1}{T}\bm{1}_{T\times T}\bR\right)$. With these definitions, \eqref{eq:SRdef1} simplifies to
\begin{equation*}
S(\bw)=\frac{\bp^\top\bw}{\sqrt{\bw^\top(\bQ^\top\bQ+\hat{\epsilon}\bI)\bw}}.
\end{equation*}

The task of maximizing $S$ subject to the the long-only and self-financing constraints is formulated as
\begin{equation}\label{model:SR1}
\max_{\bw\in\Omega}\ S(\bw),
\end{equation}
where
\begin{equation}\label{def:SetOmega}
\Omega:=\left\{\bw\in\bbRN|\hspace{2pt}\bw\geqs{\bm0}_N\ \mbox{and}\ \bw^\top{\bm1}_N=1\right\}.
\end{equation}
It is evident that the set $\Omega$, as specified by \eqref{def:SetOmega}, is closed, convex, and semi-algebraic. To solve \eqref{model:SR1}, we recast it into the form of \eqref{mod:fracfg1}. This is achieved by setting
\begin{equation}\label{def:funcfg}
f(\bw):=-\bp^\top\bw\ \ \mbox{and}\ \ g(\bw):=\sqrt{\bw^\top(\bQ^\top\bQ+\hat{\epsilon}\bI)\bw}.
\end{equation}
Consequently, \eqref{model:SR1} can be reformulated as
\begin{equation}\label{model:SR2}
\min_{\bw\in\Omega}\left\{\frac{f(\bw)}{g(\bw)}\right\}.
\end{equation}
The numerator $f(\bw)$ in \eqref{model:SR2} may take either a positive or negative value, which can infringe upon the nonnegative constraint on the numerator as mandated in \cite{zhang2022first}. Nonetheless, as delineated in Assumptions 1-2 in Section \ref{sec:PGAforFracOpt}, our method abstains from enforcing such a nonnegative constraint on the numerator, thereby offering increased flexibility.

Before employing PGA to solve \eqref{model:SR2}, we must first confirm that the functions $f$ and $g$ comply with Assumptions 1 and 2 as outlined in Section \ref{sec:PGAforFracOpt}. Note that the set $\Omega$ is bounded. It is clear that Assumption 2 is met. We now turn our attention to verifying Assumption 1. For any $\bw\in\Omega$, it is readily apparent that $\frac{1}{\sqrt{N}}\leqs\|\bw\|_2\leqs\|\bw\|_1=1$. Consequently, we have $g(\bw)\geqs\sqrt{\frac{\hat{\epsilon}}{N}}>0$. From the definition of $f$ as provided in \eqref{def:funcfg}, it can be easily seen that $f$ is convex, semi-algebraic, Lipschitz continuous, and differentiable with a Lipschitz continuous gradient on $\Omega$. As for the function $g$, we begin by reformulating it into a more succinct expression. Observe that the matrix $\bQ_{\hat{\epsilon}}:=\bQ^\top\bQ+\hat{\epsilon}\bI$, used in the definition of $g$ in \eqref{def:funcfg}, is symmetric positive definite. It possesses a spectral decomposition $\bQ_{\epsilon}=\bU\Lambda\bU^\top$, where $\bU$ is an orthogonal matrix composed of the eigenvectors of $\bQ_{\epsilon}$, $\Lambda$ is a diagonal matrix with its diagonal entries equivalent to the eigenvalues of $\bQ_{\epsilon}$. By setting $\tilde{\bQ}:=\Lambda^\frac{1}{2}\cdot\bU^\top$, we obtain $\bQ_{\epsilon}=\tilde{\bQ}^\top\tilde{\bQ}$, allowing us to express $g$ as $g(\bw)=\|\tilde{\bQ}\bw\|_2$. Additionally, it is straightforward to determine that $\|\tilde{\bQ}\|_2=\sqrt{\lambda_1}$, where $\lambda_1>0$ represents the largest eigenvalue of $\bQ_{\hat{\epsilon}}$. We proceed to establish in the following proposition that $g$ satisfies the properties in Assumption 1.

\begin{proposition}\label{prop:fung}
Let $g:\bbRN\to\bbR$ be defined by \eqref{def:funcfg}. Then the following statements hold:
\begin{itemize}
\item[$(i)$] $g$ is convex and $\sqrt{\lambda_1}$-Lipschitz continuous on $\bbRN$;
\item[$(ii)$] $\nabla g$ is $2\lambda_1\sqrt{\frac{N}{\hat{\epsilon}}}$-Lipschitz continuous on $\Omega$;
\item[$(iii)$] $g$ is a semi-algebraic function on $\Omega$.
\end{itemize}
\end{proposition}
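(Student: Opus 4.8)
The plan is to treat the three claims in order, since the Lipschitz estimate in part $(ii)$ reuses the one from part $(i)$, and part $(iii)$ follows immediately from the semi-algebraic calculus already recorded in Lemma \ref{lem:semialgeproper}. Throughout I would rely on the lower bound $g(\bw)\geqs\sqrt{\hat{\epsilon}/N}>0$ on $\Omega$, which holds because $\|\bw\|_2\geqs1/\sqrt{N}$ for $\bw\in\Omega$, as already observed in the text.

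For part $(i)$ I would write $g(\bw)=\|\tilde{\bQ}\bw\|_2$, exhibiting $g$ as the composition of the Euclidean norm with the linear map $\bw\mapsto\tilde{\bQ}\bw$. Since the norm is convex and precomposition with a linear map preserves convexity, $g$ is convex on $\bbRN$. For the Lipschitz estimate I would combine the reverse triangle inequality with the operator-norm bound,
$$
|g(\bu)-g(\bv)|=\big|\,\|\tilde{\bQ}\bu\|_2-\|\tilde{\bQ}\bv\|_2\,\big|\leqs\|\tilde{\bQ}(\bu-\bv)\|_2\leqs\|\tilde{\bQ}\|_2\|\bu-\bv\|_2=\sqrt{\lambda_1}\|\bu-\bv\|_2,
$$
using the already-noted identity $\|\tilde{\bQ}\|_2=\sqrt{\lambda_1}$.

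For part $(ii)$ I would first compute $\nabla g(\bw)=\bQ_{\hat{\epsilon}}\bw/g(\bw)$, which is well defined since $g(\bw)\geqs\sqrt{\hat{\epsilon}/N}$ on $\Omega$. Putting the difference of the two gradients over the common denominator $g(\bu)g(\bv)$ and adding and subtracting $\bQ_{\hat{\epsilon}}\bv\,g(\bv)$ in the numerator yields the decomposition
$$
\nabla g(\bu)-\nabla g(\bv)=\frac{\bQ_{\hat{\epsilon}}(\bu-\bv)}{g(\bu)}+\frac{\bQ_{\hat{\epsilon}}\bv\,(g(\bv)-g(\bu))}{g(\bu)g(\bv)}.
$$
The first term is controlled by $\|\bQ_{\hat{\epsilon}}\|_2=\lambda_1$ together with $g(\bu)\geqs\sqrt{\hat{\epsilon}/N}$, giving a bound $\lambda_1\sqrt{N/\hat{\epsilon}}\|\bu-\bv\|_2$. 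The crucial point, which I expect to be the main obstacle for getting the sharp constant, is estimating the second term: the naive bound $\|\bQ_{\hat{\epsilon}}\bv\|_2\leqs\lambda_1\|\bv\|_2\leqs\lambda_1$ introduces an extra factor and spoils the clean constant $2\lambda_1\sqrt{N/\hat{\epsilon}}$. Instead I would exploit the factorization $\bQ_{\hat{\epsilon}}=\tilde{\bQ}^\top\tilde{\bQ}$ to write $\|\bQ_{\hat{\epsilon}}\bv\|_2=\|\tilde{\bQ}^\top\tilde{\bQ}\bv\|_2\leqs\|\tilde{\bQ}\|_2\|\tilde{\bQ}\bv\|_2=\sqrt{\lambda_1}\,g(\bv)$. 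The factor $g(\bv)$ then cancels one denominator, and combining with the Lipschitz bound $|g(\bv)-g(\bu)|\leqs\sqrt{\lambda_1}\|\bu-\bv\|_2$ from part $(i)$ and $g(\bu)\geqs\sqrt{\hat{\epsilon}/N}$ shows the second term is also bounded by $\lambda_1\sqrt{N/\hat{\epsilon}}\|\bu-\bv\|_2$. Adding the two contributions produces exactly the claimed constant $2\lambda_1\sqrt{N/\hat{\epsilon}}$.

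For part $(iii)$ I would observe that $\bw\mapsto\bw^\top\bQ_{\hat{\epsilon}}\bw$ is a polynomial and hence semi-algebraic, and that it is nonnegative because $\bQ_{\hat{\epsilon}}$ is positive definite. Applying item $(iv)$ of Lemma \ref{lem:semialgeproper} to its square root then shows at once that $g=\sqrt{\bw^\top\bQ_{\hat{\epsilon}}\bw}$ is semi-algebraic on $\Omega$, completing the proof.
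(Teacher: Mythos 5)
Your proposal is correct and follows essentially the same route as the paper's proof: convexity via composition with a linear map and the reverse triangle inequality for part $(i)$, an add-and-subtract decomposition of the gradient quotient combined with the lower bound $g\geqs\sqrt{\hat{\epsilon}/N}$ and the factorization $\bQ_{\hat{\epsilon}}=\tilde{\bQ}^\top\tilde{\bQ}$ for part $(ii)$, and item $(iv)$ of Lemma \ref{lem:semialgeproper} applied to the polynomial $g^2$ for part $(iii)$. The only cosmetic difference is that the paper factors out $\tilde{\bQ}^\top$ before splitting the difference of normalized vectors, whereas you split the quotient directly and cancel a denominator via $\|\bQ_{\hat{\epsilon}}\bv\|_2\leqs\sqrt{\lambda_1}\,g(\bv)$; both yield the same constant $2\lambda_1\sqrt{N/\hat{\epsilon}}$.
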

\begin{proof}
We first prove item $(i)$. Note that $g=\|\cdot\|_2\circ\tilde{\bQ}$. The convexity of $g$ follows immediately from the convexity of $\|\cdot\|_2$ and \cite[Theorem 5.7]{rockafellar1970convex}. For all $\bx,\by\in\bbRn$, we have that $\|g(\bx)-g(\by)\|=\left|\|\tilde{\bQ}\bx\|_2-\|\tilde{\bQ}\by\|_2\right|\leqs\|\tilde{\bQ}\bx-\tilde{\bQ}\by\|_2\leqs\sqrt{\lambda_1}\|\bx-\by\|_2$, which implies the Lipschitz continuity of $g$.

Next, we prove item $(ii)$. Note that $\nabla g(\bw)=\frac{\tbQ^{\top}\tbQ\bw}{\|\tbQ\bw\|_2}$ for $\bw\in\Omega$. Additionally, for all $\bx,\by\in\Omega$,
$$
\left\|\frac{\tbQ\by}{\|\tbQ\bx\|_2}-\frac{\tbQ\by}{\|\tbQ\by\|_2}\right\|_2=\frac{\big|\|\tbQ\by\|_2-\|\tbQ\bx\|_2\big|\cdot\|\tbQ\by\|_2}{\|\tbQ\bx\|_2\|\tbQ\by\|_2}\leqs\frac{\|\tbQ\bx-\tbQ\by\|_2}{\|\tbQ\bx\|_2}.
$$
This, along with the fact that $\|\tilde{\bQ}\bx\|_2=g(\bx)\geqs\sqrt{\frac{\hat{\epsilon}}{N}}$, gives
\begin{align*}
\|\nabla g(\bx)-\nabla g(\by)\|_2&=\left\|\frac{\tbQ^{\top}\tbQ\bx}{\|\tbQ\bx\|_2}-\frac{\tbQ^{\top}\tbQ\by}{\|\tbQ\by\|_2}\right\|_2\\
&\leqs\left\|\tbQ^\top\right\|_2\left\|\frac{\tbQ\bx-\tbQ\by}{\|\tbQ\bx\|_2}+\frac{\tbQ\by}{\|\tbQ\bx\|_2}-\frac{\tbQ\by}{\|\tbQ\by\|_2}\right\|_2\\
&\leqs2\|\tbQ\|_2\frac{\|\tbQ\bx-\tbQ\by\|_2}{\|\tbQ\bx\|_2}\leqs2\|\tbQ\|_2^2\sqrt{\frac{N}{\hat{\epsilon}}}\|\bx-\by\|_2,
\end{align*}
which implies item $(ii)$.

Finally, we prove item $(iii)$. It is evident that $g^2$ is semi-algebraic on $\Omega$ and $g^2(\bx)\geqs0$ for all $\bx\in\Omega$. Then, it follows from item $(iii)$ in Lemma \ref{lem:semialgeproper} that $g$ is semi-algebraic, which completes the proof.
\end{proof}

Note that for all $\bx\in\Omega$, we have $|f(\bx)|\leqs\|\bp\|_2\|\bw\|_2\leqs\|\bp\|_2$. By setting $M:=-\|\bp\|_2\sqrt{\frac{N}{\hat{\epsilon}}}$, it follows that $f(\bx)/g(\bx)\geqs M$. Consequently, we have $L_{\nabla\tilde{f}}=\|\bp\|_2\sqrt{\frac{N}{\hat{\epsilon}}}L_{\nabla g}=2\lambda_1\|\bp\|_2\cdot\frac{N}{\hat{\epsilon}}$, where $\tilde{f}$ is defined in \eqref{def:tildef}. The convergence of PGA for solving \eqref{model:SR1} is then an immediate result of Theorem \ref{thm:globalmin}.

\begin{theorem}\label{thm:globalminSRM}
Let $\Omega$ be defined by \eqref{def:SetOmega}, $f$ and $g$ be defined by \eqref{def:funcfg}. Let $\bw^0\in\bbRN$ be a given initial vector, and $\{\bw^k\}_{k\in\bbN}$ be a sequence generated by the iteration
\begin{equation}\label{PGAiterforSRM}
\bw^{k+1}=\prox_{\iota_{\Omega}}\left(\bw^k-\alpha\nabla f(\bw^k)+\alpha\frac{f(\bw^k)}{g(\bw^k)}\nabla g(\bw^k)\right),
\end{equation}
where $\alpha\in\left(0,\frac{\hat{\epsilon}}{2N\lambda_1\|\bp\|_2}\right)$. Then the sequence $\{\bw^k\}_{k\in\bbN}$ converges to a critical point $\bw^*\in\Omega$ of function $(f+\iota_{\Omega})/g$. Furthermore, if $\bp^\top\bw^*\geqs0$, then $\bw^*$ is a globally optimal solution of \eqref{model:SR2}, which is equivalent to being a globally optimal solution of  \eqref{model:SR1}.
\end{theorem}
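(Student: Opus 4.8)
The plan is to recognize Theorem \ref{thm:globalminSRM} as a direct specialization of Theorem \ref{thm:globalmin} to the Sharpe ratio setting, so that the substantive work is merely verifying that the data $(f,g,\Omega)$ satisfy the hypotheses of the general theorem and that the stated step-size range is exactly $(0,1/L_{\nabla\tilde{f}})$. First I would confirm Assumptions 1 and 2. Assumption 2 holds trivially because $\Omega$ is the bounded probability simplex, so every level set of $f/g$ is bounded. For Assumption 1, the function $f(\bw)=-\bp^\top\bw$ is linear, hence convex, semi-algebraic, Lipschitz continuous, and differentiable with a constant (thus Lipschitz) gradient; the matching properties of $g$ — convexity, Lipschitz continuity, Lipschitz gradient on $\Omega$, semi-algebraicity, and strict positivity $g(\bw)\geqs\sqrt{\hat{\epsilon}/N}>0$ — are precisely the content of Proposition \ref{prop:fung} together with the elementary bound $\|\bw\|_2\geqs 1/\sqrt{N}$ on the simplex.

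Next I would pin down the Lipschitz constant that governs the admissible step size. Since $\nabla f\equiv-\bp$ is constant we have $L_{\nabla f}=0$, and with $M:=-\|\bp\|_2\sqrt{N/\hat{\epsilon}}$ chosen so that $f/g\geqs M$, Fact 3 gives $L_{\nabla\tilde{f}}=L_{\nabla f}+|M|\,L_{\nabla g}=\|\bp\|_2\sqrt{N/\hat{\epsilon}}\cdot 2\lambda_1\sqrt{N/\hat{\epsilon}}=2\lambda_1\|\bp\|_2 N/\hat{\epsilon}$, invoking item $(ii)$ of Proposition \ref{prop:fung}. Hence $1/L_{\nabla\tilde{f}}=\hat{\epsilon}/(2N\lambda_1\|\bp\|_2)$, so the hypothesis $\alpha\in(0,\hat{\epsilon}/(2N\lambda_1\|\bp\|_2))$ is exactly the range $\alpha\in(0,1/L_{\nabla\tilde{f}})$ required by Theorem \ref{thm:globalmin}. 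I would also point out that iteration \eqref{PGAiterforSRM} is literally the equivalent PGA form \eqref{PGAiter2}, so $\{\bw^k\}_{k\in\bbN}$ is a genuine PGA sequence.

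With these verifications in place, Theorem \ref{thm:globalmin} applies directly and yields convergence of $\{\bw^k\}_{k\in\bbN}$ to a critical point $\bw^*\in\Omega$ of $F=(\tilde{f}+\iota_{\Omega})/g$. To match the stated conclusion I would observe that $(f+\iota_{\Omega})/g=(\tilde{f}+Mg+\iota_{\Omega})/g=F+M$ differs from $F$ only by the additive constant $M$; since adding a constant leaves both the Fréchet and limiting subdifferentials unchanged, the critical points of the two functions coincide, so $\bw^*$ is also a critical point of $(f+\iota_{\Omega})/g$. For the optimality claim I would translate the sign condition: $f(\bw^*)\leqs 0$ means $-\bp^\top\bw^*\leqs 0$, i.e. $\bp^\top\bw^*\geqs 0$, which is the stated hypothesis, and then invoke the global-optimality half of Theorem \ref{thm:globalmin} (equivalently Proposition \ref{prop:proxcrit}) to conclude that $\bw^*$ solves \eqref{model:SR2}. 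Because $S(\bw)=\bp^\top\bw/g(\bw)=-f(\bw)/g(\bw)$, minimizing $f/g$ over $\Omega$ is the same as maximizing $S$, so \eqref{model:SR2} and \eqref{model:SR1} are equivalent and $\bw^*$ is globally optimal for the Sharpe ratio problem.

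There is no deep obstacle here, since the result is a corollary; the step demanding the most care is the clean identification of the constants so that the given step-size interval is seen to be precisely $(0,1/L_{\nabla\tilde{f}})$, together with the small but essential observation that the critical points of $F$ (defined via $\tilde{f}$) agree with those of $(f+\iota_{\Omega})/g$ because the two objectives differ only by the constant $M$.
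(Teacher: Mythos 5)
Your proposal is correct and follows essentially the same route as the paper, which likewise treats the theorem as an immediate corollary of Theorem \ref{thm:globalmin} after verifying Assumptions 1--2 (via Proposition \ref{prop:fung} and the boundedness of the simplex) and computing $L_{\nabla\tilde{f}}=2\lambda_1\|\bp\|_2 N/\hat{\epsilon}$ with $M=-\|\bp\|_2\sqrt{N/\hat{\epsilon}}$. Your explicit remark that $(f+\iota_{\Omega})/g=F+M$, so the two functions share the same critical points, is a small detail the paper leaves implicit; including it only strengthens the argument.
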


To implement the PGA, the explicit form of $\prox_{\iota_{\Omega}}$ is required. Recall from \eqref{eq:proxproj} that $\prox_{\iota_{\Omega}}$ is precisely the projection operator onto the simplex $\Omega$. The explicit form of this operator is presented in the following Algorithm \ref{algo1}, and further details about the simplex projection algorithm can be found in \cite{simplexproject}.

\vspace{0.3em}
\begin{breakablealgorithm}\label{algo1}
\caption{\textbf{Euclidean Projection onto $\Omega$}}
\begin{algorithmic}
{\bf Input}: A given vector $\bx\in\bbRN$. \\
1. Sort $\bx$ into $\hat{\bx}$ in descending order (i.e. $\hat{x}_{1}\geqs\hat{x}_{2}\geqs\ldots\geqs\hat{x}_{N}$).\\
2. Find $j'=\max\left\{j\in\bbN_{N}: \hat{x}_{j}-\frac{1}{j}\left(\sum_{i=1}^{j}\hat{x}_{i}-1\right)>0\right\}$. \\
3. Compute $\theta=\frac{1}{j'}\left(\sum_{i=1}^{j'} \hat{x}_{i}-1\right)$. \\
{\bf Output}: The projection $\prox_{\iota_{\Omega}}(\bx)=\max\left(\bx-\theta,{\bm0}_N\right)$.
\end{algorithmic}
\end{breakablealgorithm}

The proposed method for Sharpe Ratio Maximization (SRM), abbreviated as SRM-PGA, is summarized in the following Algorithm \ref{algo2}.

\vspace{0.5em}
\begin{breakablealgorithm}\label{algo2}
\caption{\textbf{SRM-PGA}}
\begin{algorithmic}
\noindent{\bf Input:} The sample asset return matrix $\bR\in\bbR^{T\times N}$ and a positive parameter $\hat{\epsilon}$.\\
{\bf Preparation:} Define $\bp=\frac{1}{T}\bR^{\top}\bm{1}_T$, and $\bQ=\frac{1}{\sqrt{T-1}}\left(\bR-\frac{1}{T}\bm{1}_{T\times T}\bR\right)$. Compute the largest eigenvalue  $\lambda_1$ of $\bQ^\top\bQ+\hat{\epsilon}\bI$. Set $\alpha=(0.99\hat{\epsilon})/(2N\lambda_1\|\bp\|_2)$.\\
{\bf Initialization:} Initialize $\bw^0=\frac{1}{N}{\bm1}_{N}$, set tolerance $tol=10^{-5}$, and set the maximum number of iterations $MaxIter=10^{5}$.\\
{\bf Repeat}\\
1. Update $\bw^{k+1}=\prox_{\iota_{\Omega}}\left(\bw^k-\alpha\nabla f(\bw^k)+\alpha\frac{f(\bw^k)}{g(\bw^k)}\nabla g(\bw^k)\right)$.\\
2. Increment the iteration counter $k=k+1$.\\
{\bf Until}: The condition $\frac{\|\bw^{k}-\bw^{k-1}\|_2}{\|\bw^{k-1}\|_2}\leqs tol$ or $k>MaxIter$ is met.\\
3. Set $\bw^*=\bw^{k}$.\\
{\bf Output}: The optimized portfolio $\bw^*$.
\end{algorithmic}
\end{breakablealgorithm}

\subsection{A simple example of the SR model with analytical optimal solution}\label{subsec:sim1}
In this subsection, we present a simple example to verify that PGA converges to the optimal solution of \eqref{mod:fracfg1}. Consider the set
\begin{equation}\label{def:Omega2}
\Omega_2:=\left\{\bx\in\bbR^2|\hspace{2pt}x_1\geqs0,\ x_2\geqs0\ \mbox{and}\ x_1+x_2=1\right\},
\end{equation}
and the function
$$
f(\bx):=\bp^{\top}\bx\ \mbox{and}\ g(\bx):=\|\bx\|_2,\ \bx\in\bbR^2
$$
in \eqref{mod:fracfg1}. The model \eqref{mod:fracfg1} then simplifies to
\begin{equation}\label{mod:toy}
\min_{\bx\in\Omega_2}\left\{\frac{\bp^{\top}\bx}{\|\bx\|_2}\right\},
\end{equation}
which is a special case of the SR model in \eqref{model:SR2} with $\bQ={\bm0}$ and $\hat{\epsilon}=1$ in the definition \eqref{def:funcfg} of $g$. The convergence of PGA to solve \eqref{mod:fracfg1} is a direct consequence of Theorem \ref{thm:globalminSRM}.

The closed-form of $\prox_{\iota_{\Omega_2}}$ is detailed in Algorithm \ref{algo1}. The iterative scheme of PGA for solving the simplified model \eqref{mod:toy} (abbreviated as Sim1-PGA) is then described by
$$
\bx^{k+1}=\prox_{\iota_{\Omega_2}}\left(\bx^k-\alpha\bp+\alpha\frac{\bp^{\top}\bx^k}{\|\bx^k\|_2^2}\bx^k\right),
$$
where $\alpha\in\left(0,\frac{1}{4\|\bp\|_2}\right)$.

To better evaluate the performance of Sim1-PGA, we also derive the closed-form of the globally optimal solution for \eqref{mod:toy} in the following proposition.

\begin{proposition}\label{prop:toyglobal}
Suppose that $\bp\in\bbR^2$ is a vector such that $p_1\neq0$, $p_2\neq0$, $p_1+p_2\neq0$, and $p_1\neq p_2$. Define $\hat{\Omega}_2:=\{\bx\in\bR^2|\hspace{2pt}x_1<0\ \mbox{and}\ x_2<0\}$ and
$$
\bx^*:=\begin{cases}
\left(\frac{p_1}{p_1+p_2},\frac{p_2}{p_1+p_2}\right)^\top,&\mbox{if}\ p\in\hat{\Omega}_2;\\
(0,1)^\top,&\mbox{if}\ p\notin\hat{\Omega}_2\ \mbox{and}\ p_1>p_2;\\
(1,0)^\top,&\mbox{if}\ p\notin\hat{\Omega}_2\ \mbox{and}\ p_1<p_2.
\end{cases}
$$
Then $\bx^*$ is the globally optimal solution of \eqref{mod:toy}.
\end{proposition}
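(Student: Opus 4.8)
The plan is to reduce the two–dimensional problem to a single–variable minimization by exploiting that $\Omega_2$ is the segment joining $(1,0)^\top$ and $(0,1)^\top$. First I would parametrize $\Omega_2$ by $\bx(t)=(1-t,t)^\top$ for $t\in[0,1]$, so that the objective of \eqref{mod:toy} becomes
$$
\phi(t):=\frac{\bp^\top\bx(t)}{\|\bx(t)\|_2}=\frac{p_1+(p_2-p_1)t}{\sqrt{2t^2-2t+1}},\qquad t\in[0,1].
$$
Since $\|\bx(t)\|_2\geqs 1/\sqrt2>0$ on $[0,1]$, the map $\phi$ is smooth, and minimizing the objective over $\Omega_2$ is exactly minimizing $\phi$ over the compact interval $[0,1]$. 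The boundary values $\phi(0)=p_1$ and $\phi(1)=p_2$ will account for the two endpoint branches of $\bx^*$.

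Next I would compute $\phi'$ and extract the sign information that drives the whole argument. A direct differentiation shows that the sign of $\phi'(t)$ coincides with the sign of the affine function $p_2-(p_1+p_2)t$; in particular the only interior critical point is $t^*=\dfrac{p_2}{p_1+p_2}$, which is well defined because $p_1+p_2\neq0$, and it corresponds to $\bx(t^*)=\bigl(\tfrac{p_1}{p_1+p_2},\tfrac{p_2}{p_1+p_2}\bigr)^\top$. A short computation shows that $t^*\in(0,1)$ if and only if $p_1$, $p_2$, and $p_1+p_2$ share a common sign, i.e.\ if and only if $p_1$ and $p_2$ are both positive or both negative.

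The core of the proof is then a case analysis governed by the sign of $p_1+p_2$ together with the location of $t^*$. When $\bp\in\hat{\Omega}_2$ (both coordinates negative) we have $p_1+p_2<0$ and $t^*\in(0,1)$, so $p_2-(p_1+p_2)t$ is negative on $(0,t^*)$ and positive on $(t^*,1)$; hence $\phi$ strictly decreases and then strictly increases, making $t^*$ the unique global minimizer and yielding the first branch of $\bx^*$. When $\bp\notin\hat{\Omega}_2$, I would argue that $\phi$ has no interior local minimum: if $p_1,p_2$ have opposite signs then $t^*\notin(0,1)$ and $\phi$ is monotone on $[0,1]$, while if both are positive then $p_1+p_2>0$ forces $t^*$ to be a local maximum. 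In either situation the minimum of $\phi$ over $[0,1]$ is attained at an endpoint, hence equals $\min\{\phi(0),\phi(1)\}=\min\{p_1,p_2\}$; since $p_1\neq p_2$, this endpoint is unique, giving $\bx^*=(1,0)^\top$ when $p_1<p_2$ and $\bx^*=(0,1)^\top$ when $p_1>p_2$, which are precisely the two remaining branches.

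I expect the main obstacle to be bookkeeping rather than anything conceptual: one must verify that the three branches of $\bx^*$ correspond to mutually exclusive and exhaustive sign patterns of $(p_1,p_2,p_1+p_2)$, and check at each step that the nondegeneracy hypotheses $p_1\neq0$, $p_2\neq0$, $p_1+p_2\neq0$, and $p_1\neq p_2$ are exactly what guarantee a well-defined $t^*$, strict monotonicity, and a unique endpoint minimizer. The identity $\operatorname{sign}\phi'(t)=\operatorname{sign}\bigl(p_2-(p_1+p_2)t\bigr)$ is the one place where an algebraic slip could creep in, so I would carry out that differentiation carefully and cross-check it against the known endpoint values $\phi(0)=p_1$ and $\phi(1)=p_2$.
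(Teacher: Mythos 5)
Your proposal is correct and follows essentially the same route as the paper: both reduce \eqref{mod:toy} to minimizing a one-variable function $\phi$ over $[0,1]$ (you parametrize by $x_2$, the paper by $x_1$), identify the same unique interior critical point $\bigl(\tfrac{p_1}{p_1+p_2},\tfrac{p_2}{p_1+p_2}\bigr)^\top$, and split into the same cases according to the signs of $p_1$ and $p_2$. The only cosmetic difference is that you conclude via monotonicity from the (correct) sign identity $\operatorname{sign}\phi'(t)=\operatorname{sign}\bigl(p_2-(p_1+p_2)t\bigr)$, whereas the paper explicitly evaluates $\phi$ at the critical point and the endpoints and compares the values.
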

\begin{proof}
If follows from the definition of $\Omega_2$ in \eqref{def:Omega2} that for $\bx\in\Omega_2$, we have $\frac{\bp^{\top}\bx}{\|\bx\|_2}=\frac{(p_1-p_2)x_1+p_2}{\sqrt{2x_1^2-2x_1+1}}$, where $x_1\in[0,1]$. Define $\phi(t):=\frac{(p_1-p_2)t+p_2}{\sqrt{2t^2-2t+1}}$ for $t\in[0,1]$. It is easily verified that $\phi'(t^*)=0$ if and only if $t^*=\frac{p_1}{p_1+p_2}$. Since $\phi$ is differentiable on $(0,1)$ and continuous on $[0,1]$, $\phi$ must attain its minimum value at $0$, $1$, or $t^*$ when $t^*\in[0,1]$. We compute that
$\phi(0)=p_2$, $\phi(1)=p_1$, and $\phi(t^*)=\frac{|p_1+p_2|}{p_1+p_2}\sqrt{p_1^2+p_2^2}$. If $\bp\in\hat{\Omega}_2$, that is, $p_1<0$ and $p_2<0$, then $t^*\in(0,1)$ and $\phi(t^*)=-\sqrt{p_1^2+p_2^2}<\min\{\phi(0),\phi(1)\}$. Hence, the globally optimal solution of \eqref{mod:toy} is $\left(\frac{p_1}{p_1+p_2},\frac{p_2}{p_1+p_2}\right)^\top$.

We then consider the case where $\bp\notin\hat{\Omega}_2$. In this case, if $p_1>0$ and $p_2<0$, or $p_1<0$ and $p_2>0$, then $t^*\notin[0,1]$. If $p_1>0$ and $p_2>0$, then $t^*\in[0,1]$, but $\phi(t^*)=\sqrt{p_1^2+p_2^2}>\max\{\phi(0),\phi(1)\}$. Thus, $\phi$ attains its minimum value at $0$ or $1$. It is now obvious that the globally optimal solution of \eqref{mod:toy} is $(0,1)^\top$ if $p_2<p_1$, or $(1,0)^\top$ if $p_1<p_2$, which completes the proof.
\end{proof}

Two experiments showing that Sim1-PGA converges to the globally optimal solution of \eqref{mod:toy} with two different choices of $\bp$ will be presented in Section \ref{exp:toyexample}.

\subsection{An example with unbounded constraint set and multiple globally optimal solutions}
\label{sec:exampunbound}
We present another example featuring an unbounded constraint set where Dinkelbach's approach is inapplicable. Consider
\begin{gather}
\label{def:Omega2prime}\Omega_2':=\left\{\bx\in\bbR^2\big|\hspace{2pt}|x_2|\leqs a_0\right\},\ \ a_0\in\bbR_+,\\
\label{def:matAB}\bA:=\left[\begin{array}{ll}
a_1 & 0 \\
0 & a_2
\end{array}\right],\ \ \bB:=\left[\begin{array}{ll}
a_4 & 0 \\
0 & a_5
\end{array}\right],\ \ a_i\in\bbR_+,\ i=1,2,4,5,\\
\label{def:fandgquadr}f(\bx):=\bx^\top \bA \bx+a_3 \  \mbox{and} \ g(\bx):=\bx^\top \bB \bx+a_6,\ \ a_3,a_6\in\bbR_+,
\end{gather}
where $a_1,a_2,\ldots a_6$ are such that
\begin{equation}\label{eqn:toy3para}
a_1a_5>a_2a_4\ \mbox{and}\ a_3a_5=a_2a_6.
\end{equation}
Then model \eqref{mod:fracfg1} becomes
\begin{equation}\label{mod:toy3}
\min_{\bx\in\Omega_2'}\left\{\frac{\bx^\top \bA \bx+a_3}{\bx^\top \bB \bx+a_6}\right\}.
\end{equation}
Given that $f(\bx)>0$ for all $x\in\Omega_2'$ and $\Omega_2'$ is unbounded, two conditions for Dinkelbach's approach are not met. Nevertheless, this model can still be solved by our approach. One can readily verify that this model fulfills Assumptions 1 in Section \ref{sec:PGAforFracOpt}. We now demonstrate that it also satisfies Assumption 2. For any given $d\in\left\{\frac{f(\bx)}{g(\bx)}\Big|\hspace{2pt}\bx\in\Omega_2'\right\}$, consider the level set $\left\{\bx\in\Omega_2'\Big|\hspace{2pt} \frac{f(\bx)}{g(\bx)}\leqs d \right\}$. With the equality $a_3a_5=a_2a_6$, it follows that
\begin{equation}\label{mod:toy3obj}
\frac{f(\bx)}{g(\bx)}=\frac{\left(a_1-\frac{a_2a_4}{a_5}\right)x_1^2}{a_4x_1^2+a_5x_2^2+a_6}+\frac{a_2}{a_5},\ \ \bx\in\bbR^2,
\end{equation}
which implies that $d\geqs\frac{a_2}{a_5}$. We next prove that $d<\frac{a_1}{a_4}$. From \eqref{eqn:toy3para}, we have
\begin{equation}\label{neq:a1a6ga3a4}
a_1a_6=\frac{a_1a_3a_5}{a_2}>\frac{a_2a_3a_4}{a_2}=a_3a_4.
\end{equation}
The inequalities \eqref{neq:a1a6ga3a4} and $a_1a_5>a_2a_4$ imply that
$$
a_1a_4x_1^2+a_2a_4x_2^2+a_3a_4<a_1a_4x_1^2+a_1a_5x_2^2+a_1a_6,
$$
that is, $\frac{f(\bx)}{g(\bx)}<\frac{a_1}{a_4}$ for all $\bx\in\bbR^2$. Thus, $d\in\left[\frac{a_2}{a_5},\frac{a_1}{a_4}\right)$. Let $d':=\frac{a_5}{a_1a_5-a_2a_4}\left(d-\frac{a_2}{a_5}\right)$. It is then easy to verify that $1-a_4d'>0$. According to \eqref{mod:toy3obj}, $\frac{f(\bx)}{g(\bx)}\leqs d$ implies that $\frac{x_1^2}{a_4x_1^2+a_5x_2^2+a_6}\leqs d'$, that is, $(1-a_4d')x_1^2\leqs a_5d'x_2^2+a_6d'$. This implies the boundedness of the level set $\left\{\bx\in\Omega_2'\Big|\hspace{2pt} \frac{f(\bx)}{g(\bx)}\leqs d \right\}$, since $|x_2|\leqs a_0$ for $\bx\in\Omega_2'$. Therefore, Assumption 2 in Section \ref{sec:PGAforFracOpt} is satisfied.

We then provide the globally optimal solutions of \eqref{mod:toy3}. Compute that
\begin{equation}\label{mod:toy3grad}
\nabla(f/g)(\bx)=\frac{2}{(\bx^\top \bB \bx+a_6)^2}\left(\begin{array}{l}
x_1[(a_1a_5-a_2a_4)x_2^2+(a_1a_6-a_3a_4)]  \\
x_2[(a_2a_4-a_1a_5)x_1^2+(a_2a_6-a_3a_5)]
\end{array}\right).
\end{equation}
Recall that $a_1a_5>a_2a_4$ and $a_1a_6>a_3a_4$. To obtain a critical point for $f/g$, we set $\nabla(f/g)(\bx)=\bm{0}$. It is then easy to see from the first component of $\nabla(f/g)$ that $x_1=0$. Substituting $x_1=0$ into the second component of $\nabla(f/g)$ and using the fact that $a_3a_5=a_2a_6$ in \eqref{eqn:toy3para}, we find that for arbitrary $x_2\in\bbR$, the second component of $\nabla(f/g)$ equals zero. In conclusion, the set of all critical points of $f/g$ is $\{\bx\in\bbR^2|\hspace{2pt}x_1=0\}$. In fact, from \eqref{mod:toy3obj}, we can also see that $f/g$ reaches its minimum $\frac{a_2}{a_5}$ at any point in $\{\bx\in\bbR^2|\hspace{2pt}x_1=0\}$. Therefore, the set of globally optimal solutions of \eqref{mod:toy3} is $\{\bx\in\bbR^2|\hspace{2pt}x_1=0,\ |x_2|\leqs a_0\}$.

It is easy to verify that
$$
\prox_{\iota_{\Omega_2'}}(\bx)=
\left(x_1,\ \sign({x}_2)\cdot\min\{|{x}_2|,a_0\}\right)^\top,\ \ \mbox{for}\ \bx\in\bbR^2.
$$
The iterative scheme of PGA for solving the simple model \eqref{mod:toy3} (abbreviated as Sim2-PGA) can then be given by
$$
\bx^{k+1}=\prox_{\iota_{\Omega_2'}}\left(\bx^k-2\alpha\bA \bx^k+2\alpha\frac{(\bx^{k})^{\top}\bA \bx^k+a_3}{(\bx^{k})^{\top}\bB \bx^k+a_6}\bB\bx^k\right),
$$
where $\alpha\in\left(0,\frac{1}{2\cdot\max\{a_1,a_2\}}\right)$.

According to Theorem \ref{thm:globalmin}, for $\bx^0\in\Omega_2'$, the sequence $\{\bx^k\}_{k\in\bbN}$ generated by Sim2-PGA converges to a critical point $\bx^*\in\Omega_2'$ of $(f+\iota_{\Omega_2'})/g$. In the following proposition, we show that $\bx^*$ is also a critical point of $f/g$, thereby confirming that it is a globally optimal solution of \eqref{mod:toy3}.
\begin{proposition}
Let $\Omega_2'$ be defined by \eqref{def:Omega2prime}, and let $f$ and $g$ be defined by \eqref{def:fandgquadr}, where $\bA,\bB$ are given by \eqref{def:fandgquadr} and $a_1,a_2,\ldots,a_6$ satisfy \eqref{eqn:toy3para}. If $\bx^*\in\Omega_2'$ is a critical point of $(f+\iota_{\Omega_2'})/g$, then it is also a critical point of $f/g$.
\end{proposition}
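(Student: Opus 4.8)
The plan is to reduce the statement to a single scalar condition on the first coordinate of $\bx^*$, using the subdifferential characterization of critical points together with the product structure of $\Omega_2'$, and then to exclude the degenerate case via the strict bound $f/g<a_1/a_4$ already proved in this subsection.

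First I would place the problem in the framework of Proposition \ref{prop:critpoint}. Since $f(\bx)>0$ for all $\bx\in\Omega_2'$, the choice $M=0$ in \eqref{def:tildef} is admissible, so that $\tilde{f}=f$ and $(f+\iota_{\Omega_2'})/g$ coincides with the function $F$ of \eqref{mod:fracfg2} for this example. Applying Proposition \ref{prop:critpoint} with $\Omega=\Omega_2'$, the hypothesis that $\bx^*$ is a critical point of $(f+\iota_{\Omega_2'})/g$ becomes
\begin{equation*}
\bm{0}_2\in\partial\iota_{\Omega_2'}(\bx^*)+\nabla f(\bx^*)-c^*\nabla g(\bx^*),\qquad c^*:=\frac{f(\bx^*)}{g(\bx^*)}.
\end{equation*}

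Next I would compute $\partial\iota_{\Omega_2'}(\bx^*)$. Because $\Omega_2'=\bbR\times[-a_0,a_0]$ is a Cartesian product of convex sets that imposes no restriction on $x_1$, its subdifferential (the normal cone) factorizes and every element of $\partial\iota_{\Omega_2'}(\bx^*)$ has vanishing first coordinate. Reading off the first component of the inclusion above, and using $\nabla f(\bx)=2\bA\bx$ and $\nabla g(\bx)=2\bB\bx$ with $\bA,\bB$ diagonal, I obtain the scalar equation $2x_1^*(a_1-c^*a_4)=0$.

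The decisive step---and the only place where the structural assumptions \eqref{eqn:toy3para} enter---is to rule out the second factor. Here I would invoke the strict inequality $f(\bx)/g(\bx)<a_1/a_4$ for all $\bx\in\bbR^2$, established earlier in this subsection as a consequence of $a_1a_5>a_2a_4$ and $a_1a_6>a_3a_4$. This yields $c^*<a_1/a_4$, hence $a_1-c^*a_4>0$ since $a_4>0$, forcing $x_1^*=0$. Finally, substituting $x_1^*=0$ into the gradient formula \eqref{mod:toy3grad} and using $a_3a_5=a_2a_6$ shows $\nabla(f/g)(\bx^*)=\bm{0}_2$, so that $\bx^*$ is a critical point of $f/g$. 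I expect the main---and essentially only nontrivial---obstacle to be the bookkeeping that the normal cone of $\Omega_2'$ is inactive in the $x_1$-direction; once the scalar equation is isolated, the conclusion follows immediately from the previously derived strict bound.
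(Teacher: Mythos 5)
Your proposal is correct and follows essentially the same route as the paper: invoke Proposition \ref{prop:critpoint}, observe that every element of $\partial\iota_{\Omega_2'}(\bx^*)$ has zero first component, deduce $x_1^*=0$ from the first coordinate of the inclusion, and then use $a_3a_5=a_2a_6$ to kill the second component. The only cosmetic difference is that you obtain $a_1-c^*a_4>0$ from the previously established bound $f/g<a_1/a_4$, while the paper checks directly that $(a_2a_4-a_1a_5)x_2^{*2}+(a_3a_4-a_1a_6)<0$; these rest on the same two inequalities from \eqref{eqn:toy3para}.
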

\begin{proof}
Since $\bx^*$ is a critical point of $(f+\iota_{\Omega_2'})/g$, Proposition \ref{prop:critpoint} indicates that
\begin{equation}\label{eqn:fermatsim2}
c^*\nabla g(\bx^*)-\nabla f(\bx^*)\in\partial\iota_{\Omega_2'}(\bx^*),
\end{equation}
where $c^*:=\frac{f(\bx^*)}{g(\bx^*)}$. We then examine both sides of the above inclusion relation. A direct calculation yields
\begin{equation}\label{eqn:fermatsim2left}
c^*\nabla g(\bx^*)-\nabla f(\bx^*)=\frac{2}{g(\bx^*)}\left(\begin{array}{l}
x_1^*[(a_2a_4-a_1a_5)x_2^{*2}+(a_3a_4-a_1a_6)]  \\
x_2^*[(a_1a_5-a_2a_4)x_1^{*2}+(a_3a_5-a_2a_6)]
\end{array}\right).
\end{equation}
Note that $\iota_{\Omega_2'}$ is convex. According to Lemma \ref{lem_limgradconv}, for any $\bv\in \partial\iota_{\Omega_2'}(\bx^*)$ and any $\bx\in \bbR^2$, the following inequality holds
\begin{equation}\label{eqn:fermatsim2right}
\iota_{\Omega_2'}(\bx)-\iota_{\Omega_2'}(\bx^*)\geqs\langle\bv,\bx-\bx^*\rangle.
\end{equation}
Recall that $\bx^*\in\Omega_2'$. For any $\bx\in \Omega_2'$, $\iota_{\Omega_2'}(\bx)=\iota_{\Omega_2'}(\bx^*)=0$, and \eqref{eqn:fermatsim2right} becomes
\begin{equation}\label{eqn:fermatsim2right2}
v_1(x_1-x_1^*)+v_2(x_2-x_2^*)\leqs 0.
\end{equation}
To obtain the set $\partial\iota_{\Omega_2'}(\bx^*)$, we consider two cases for $\bx^*$. For the case $\bx^*\in\{\bx\in\bbR^2|\hspace{2pt}x_2\in(-a_0,a_0)\}$, we have $\bv=\bm0$. Otherwise, there exists $\bx\in\Omega_2'$ such that $x_i-x_i^*$ has the same sign as $v_i$, $i=1,2$, and hence $\langle\bv,\bx-\bx^*\rangle>0$, which contradicts \eqref{eqn:fermatsim2right2}. We then consider the case $\bx^*\in\{\bx\in\bbR^2|\hspace{2pt}|x_2|=a_0\}$. In this case, $x_2^*=a_0$ or $x_2^*=-a_0$. Since \eqref{eqn:fermatsim2right2} holds for $x_2=x_2^*$ and arbitrary $x_1\in\bbR$, we deduce that $v_1=0$. Then it is straightforward to see that $v_2\geqs0$ for $x_2^*=a_0$ and $v_2\leqs0$ for $x_2^*=-a_0$. Conversely, it is easy to verify that the sets in the three cases of the following equation \eqref{eqn:partialomega2pi} are included in $\partial\iota_{\Omega_2'}(\bx^*)$ for these cases. In summary,
\begin{equation}\label{eqn:partialomega2pi}
\partial\iota_{\Omega_2'}(\bx^*)=\begin{cases}
\bm{0},   &  \mbox{if}\ |x_2^*|<a_0;\\
\{\bv\in\bbR^2|\hspace{2pt}v_1=0,\ v_2\geqs 0\},&\mbox{if}\  x_2^*=a_0;\\
\{\bv\in\bbR^2|\hspace{2pt}v_1=0,\ v_2\leqs 0\},&\mbox{if}\ x_2^*=-a_0.
\end{cases}
\end{equation}
In all the three cases of \eqref{eqn:partialomega2pi}, $v_1=0$ for all $\bv\in\partial\iota_{\Omega_2'}(\bx^*)$. Combined with \eqref{eqn:fermatsim2} and \eqref{eqn:fermatsim2left}, we have
\begin{equation}\label{eq:x1starx2starzero}
x_1^*[(a_2a_4-a_1a_5)x_2^{*2}+(a_3a_4-a_1a_6)]=0.
\end{equation}
Recall from \eqref{eqn:toy3para} and \eqref{neq:a1a6ga3a4} that $a_2a_4-a_1a_5<0$ and $a_3a_4-a_1a_6<0$. Thus, \eqref{eq:x1starx2starzero} implies that $x_1^*=0$. Together with the fact that $a_3a_5-a_2a_6=0$, this yields that the second component on the right-hand side of \eqref{eqn:fermatsim2left} is equal to 0. Thus far, we have obtain that $c^*\nabla g(\bx^*)-\nabla f(\bx^*)=\bm{0}$, which implies that $\bx^*$ is also a critical point of $f/g$.
\end{proof}

Although the requirement $f(\bx^*)\leqs0$ for a globally optimal solution, as mentioned in Theorem \ref{thm:globalmin}, does not necessarily hold, the above analysis indicates that the critical point $\bx^*$ is also one of the globally optimal solutions of \eqref{mod:toy3}. This leads to a numerical test for the feasibility of Sim2-PGA in Section \ref{exp:toyexample2}.

\section{Experimental Results}\label{sec:experiment}
In this section, we conduct experiments based on the examples mentioned in Section \ref{sec:numerexam}. We utilize both simple-example data with ground truths and real-world financial data to evaluate the performance of the proposed method.

\subsection{Experiments for Sim1-PGA}\label{exp:toyexample}
The analytical optimal solution of \eqref{mod:toy} has been determined in Section \ref{subsec:sim1}. In this subsection, we conduct two experiments with ground truth to assess the performance of Sim1-PGA. We define $\bp_A:=(2,-1)^\top$, $\bp_B:=(-2,-1)^\top$, and then set $\bp=\bp_A$ for experiment Sim1-A and $\bp=\bp_B$ for experiment Sim1-B, respectively. From Proposition \ref{prop:toyglobal}, the globally optimal solution of \eqref{mod:toy} is $\bx^*=(0,1)^\top$ for experiment Sim1-A, and $\bx^*=\left(\frac{2}{3},\ \frac{1}{3}\right)^\top$ for experiment Sim1-B. In both experiments, we set $\alpha=0.99/(4\|\bp\|_2)$ and $\bx^0=\left(\frac{1}{2},\frac{1}{2}\right)^\top$. The experimental results demonstrate that the sequence $\{\bx^k\}_{k\in\bbN}$ generated by Sim1-PGA converges to $\bx^*$. Table \ref{tab_toyexample} (a) and the first column of Figure \ref{fig_toyexample} pertain to experiment Sim1-A, while Table \ref{tab_toyexample} (b) and the second column of Figure \ref{fig_toyexample} pertain to experiment Sim1-B. These results indicate that Sim1-PGA successfully finds the ground-truth solutions and reduces the objective function values over the iterations.

\begin{table}[htbp]
\centering
\caption{Iterative sequences $\{\bx^k\}_{k\in\bbN}$ generated by Sim1-PGA for Sim1-A and Sim1-B.}
\label{tab_toyexample}
\scalebox{1.1}{
\begin{tabular}{|c|c|c||c|c|c|}
	\hline
\multicolumn{3}{|c||}{Experiment Sim1-A}  & \multicolumn{3}{c|}{Experiment Sim1-B}\\
	\hline
	$k$ & $x_1^k$ & $x_2^k$&   	$k$ & $x_1^k$ & $x_2^k$ \\
    \hline
	0 & 0.5000 & 0.5000 &     	0 & 0.5000 & 0.5000 \\
	\hline
	1 & 0.3340 & 0.6660&   	1 & 0.5553 & 0.4447 \\
	\hline
	2 & 0.1679 & 0.8321&	5 & 0.6427 & 0.3573 \\
	\hline
	3 & 0.0272 & 0.9728 &	10 & 0.6627& 0.3373 \\
	\hline
	4 & 0.0000 & 1.0000 &	20 & 0.6666& 0.3334 \\
	\hline
	5 & 0.0000 & 1.0000 &	27 & 0.6667& 0.3333 \\
	\hline
\end{tabular}
}
\end{table}

\begin{figure}[!htbp]\vspace{2em}
\hspace{6em}\small{Experiment Sim1-A}\hspace{14em}
\small{Experiment Sim1-B}\\
\centering
\subfigure[]{
\includegraphics[width=0.44\textwidth]{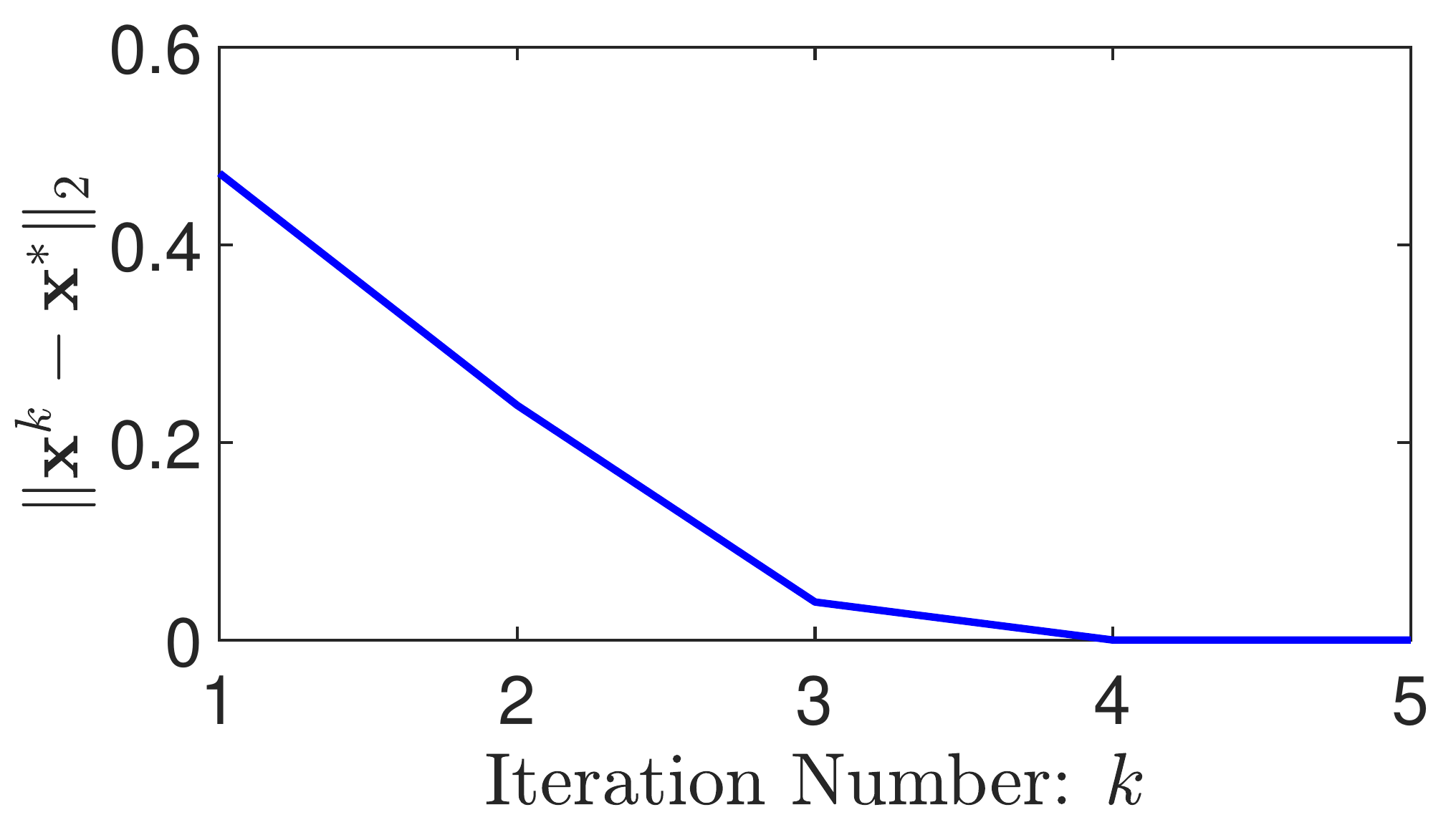}}\hspace{2pt}
\subfigure[]{	\includegraphics[width=0.44\textwidth]{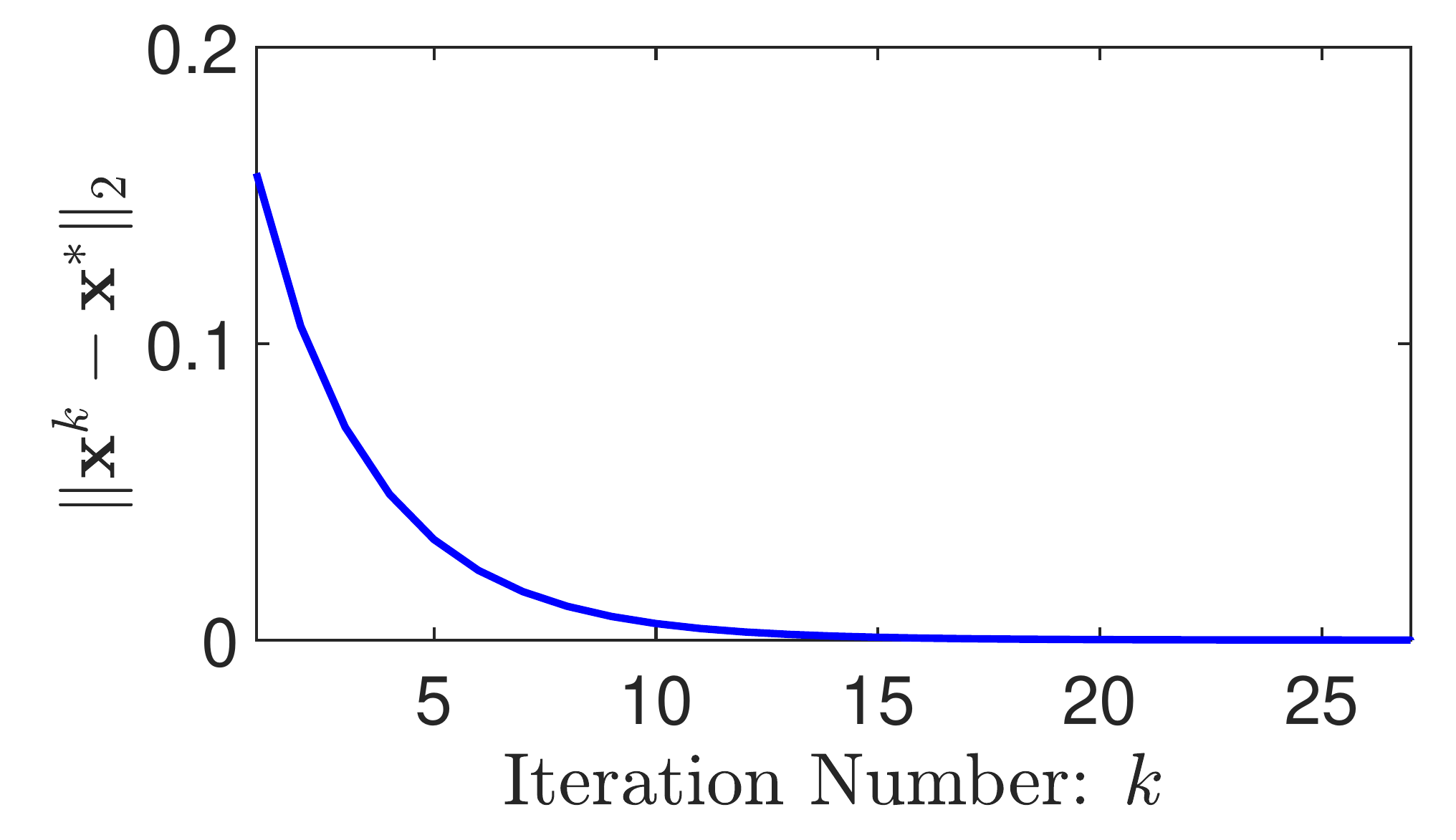}}\\\vspace{-0.5em}
\subfigure[]{ \includegraphics[width=0.44\textwidth]{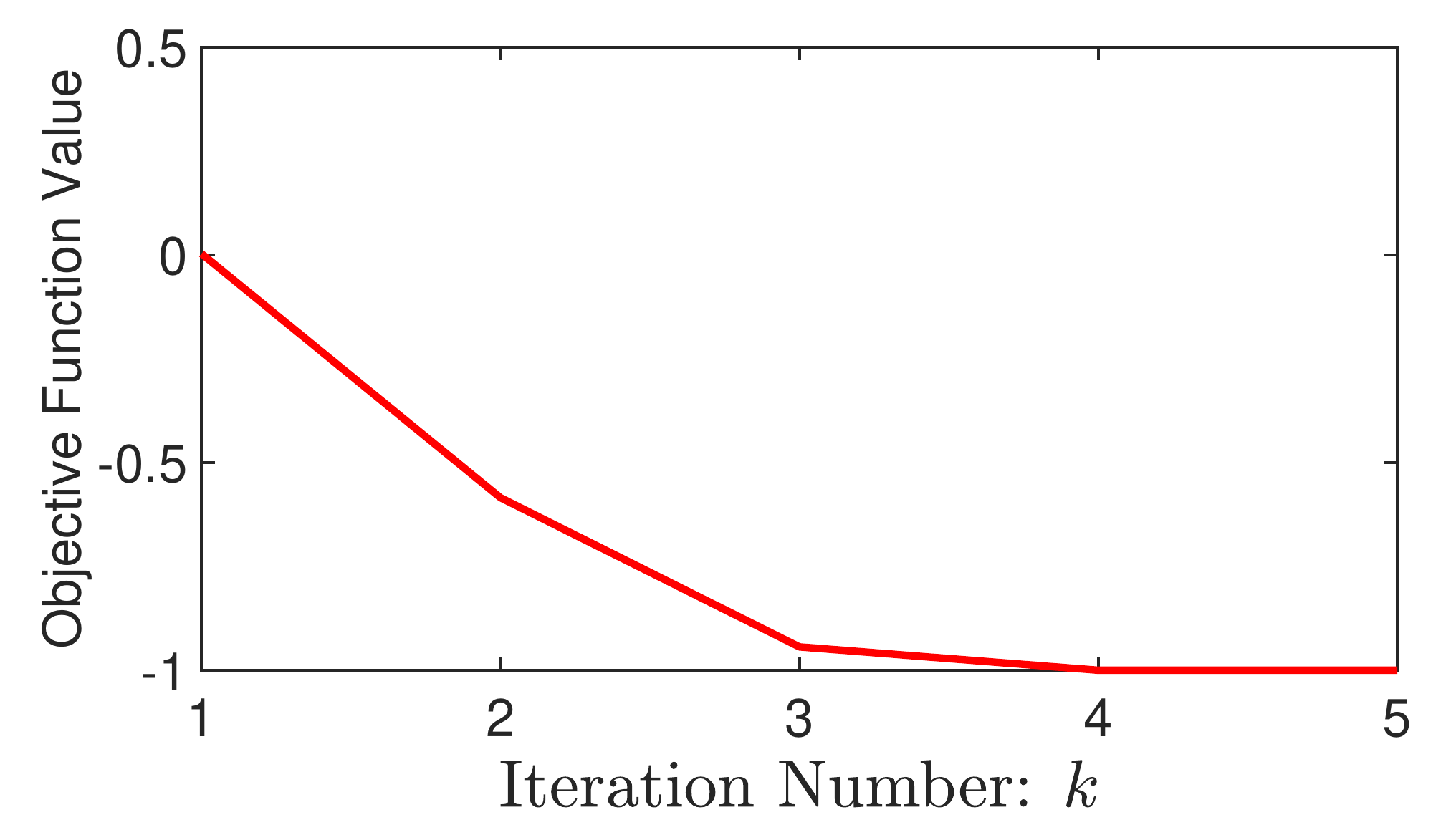}}\hspace{2pt}
\subfigure[]{ \includegraphics[width=0.44\textwidth]{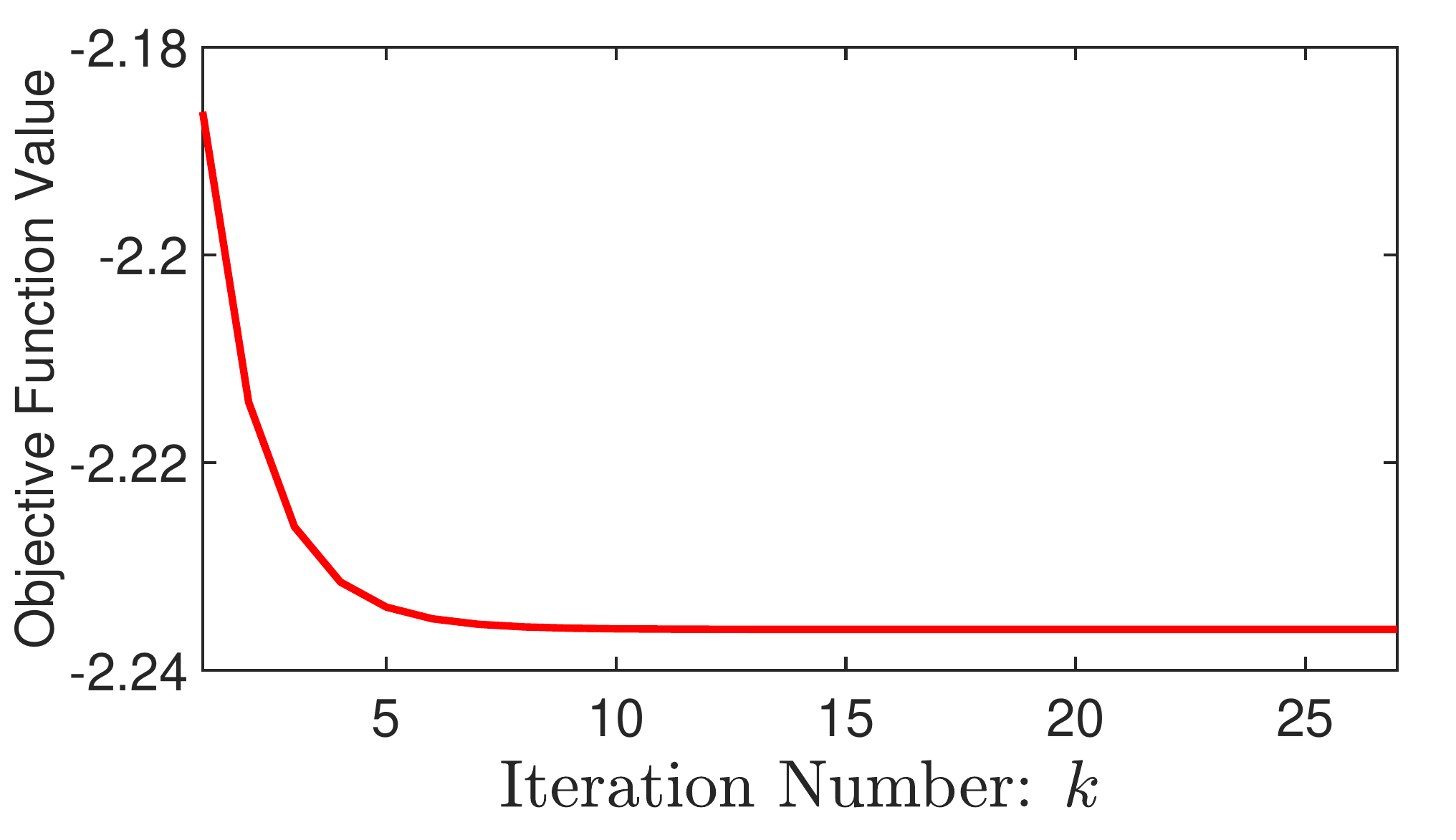}}\\\vspace{-0.5em}
\subfigure[]{	\includegraphics[width=0.44\textwidth]{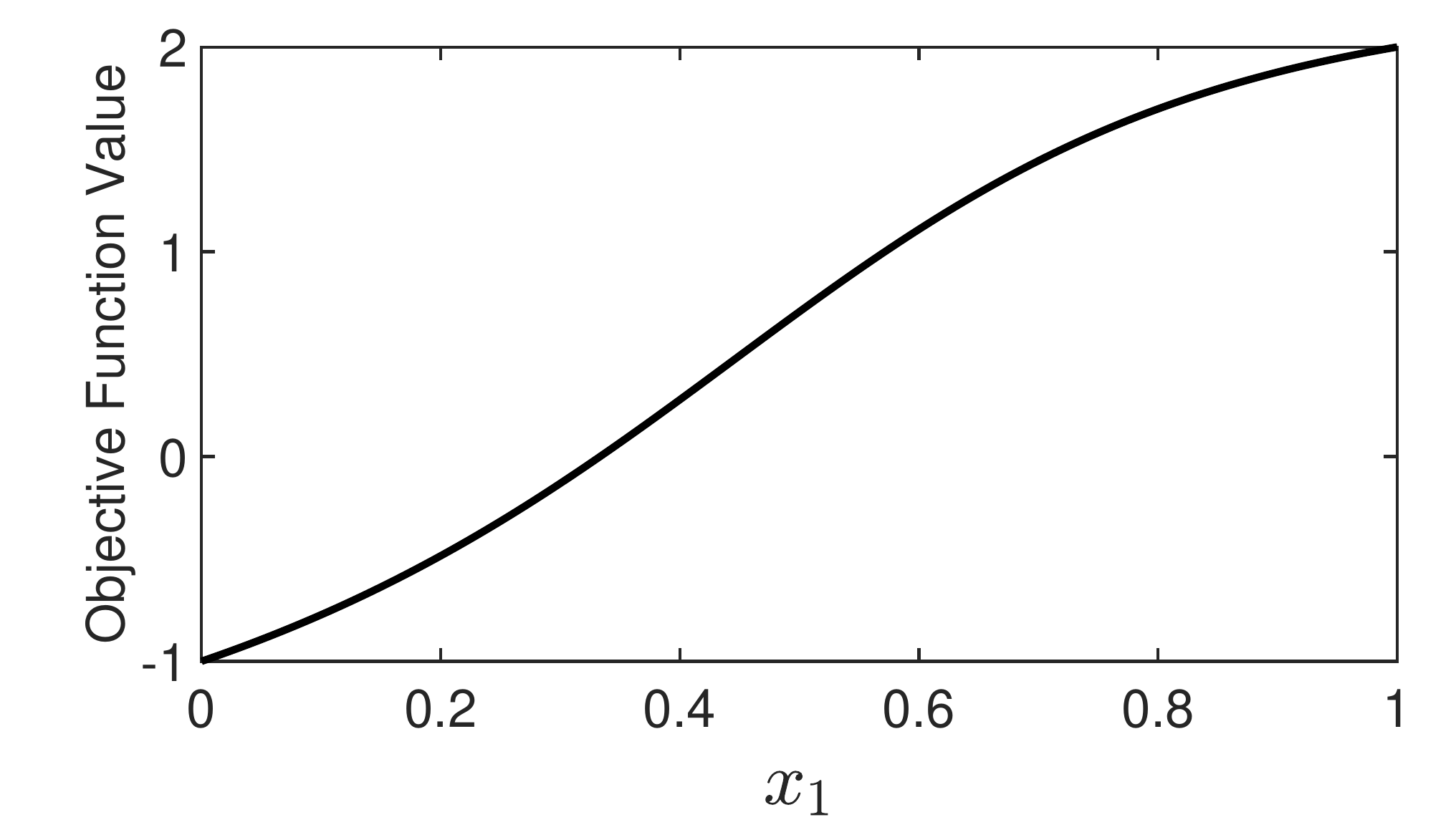}}\hspace{2pt}
\subfigure[]{ \includegraphics[width=0.44\textwidth]{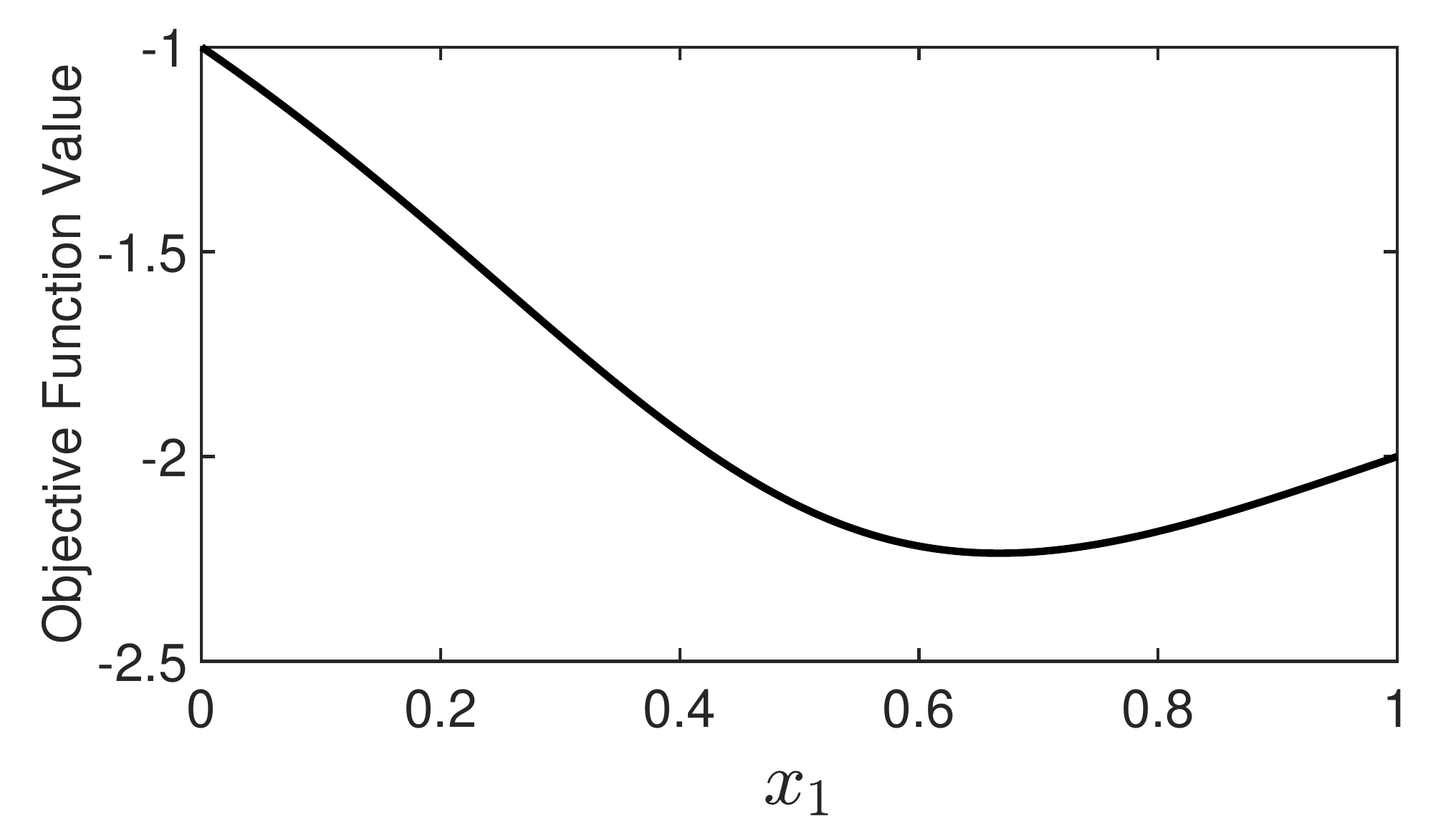}}\vspace{-0.5em}
\caption{Experimental results of Sim1-A and Sim1-B. (a) and (b) suggest that Sim1-PGA converges to the ground-truth solutions. (c) and (d) demonstrate that Sim1-PGA monotonically decreases the objective function until it reaches its minimum. (e) and (f): By substituting $x_2=1-x_1$ into the objective function, it is transformed into a one-variable function in terms of $x_1$.}
\label{fig_toyexample}
\end{figure}

\subsection{Experiments for Sim2-PGA}\label{exp:toyexample2}
In this subsection, we conduct experiments to confirm that Sim2-PGA converges to a globally optimal solution of \eqref{mod:toy3}. We set the parameters in \eqref{mod:toy3} as $a_0=100$, $a_1=4$, $a_2=a_5=2$, and $a_3=a_4=a_6=3$. For the step size and initial point in Sim2-PGA, we define $\alpha=0.99/(2\cdot \max\{a_1,a_2\})$ and select four different initial points $\bx^0=(50,50)^\top$, $(50,-50)^\top$, $(95,95)^\top$, and $(95,-95)^\top$ for experiments Sim2-A, Sim2-B, Sim2-C, and Sim2-D, respectively. These experiments demonstrate that Sim2-PGA may converge to different globally optimal solutions depending on the initial points used.

In Table \ref{tab_toyexample2}, we illustrate the progression of the iterative sequences as the iteration number $k$ increases for the four distinct initial points. Figure \ref{fig_toyexample2} (a) and (b) display the performance of $\|\bx^k-\bx^*\|_2$ and the objective function value versus the iteration number for Sim2-A, Sim2-B, Sim2-C, and Sim2-D. Figure \ref{fig_toyexample2} (c) depicts the graph of the objective function $\frac{f(x_1,x_2)}{g(x_1,x_2)}:=\frac{4x_1^2+2x_2^2+3}{3x_1^2+2x_2^2+3}$. The experimental results indicate that the sequence $\{\bx^k\}_{k\in\bbN}$ generated by Sim2-PGA indeed converges to a global minimizer of the objective function for each initial point. Furthermore, as the iteration number grows, both the value of $\|\bx^k-\bx^*\|_2$ and the objective function value decrease monotonically.

\begin{table}[htbp]
\centering
\caption{Iterative sequences $\{\bx^k\}_{k\in\bbN}$ generated by Sim2-PGA using four distinct initial points.}
\label{tab_toyexample2}
\setlength{\tabcolsep}{2mm}
\scalebox{1.1}{
\begin{tabular}{|@{}c@{}|c|c|| @{}c@{}|c|c|| @{}c@{}|c|c|| @{}c@{}|c|c|}
	\hline
	\multicolumn{3}{|c||}{Experiment Sim2-A}  & \multicolumn{3}{c||}{Experiment Sim2-B}  & \multicolumn{3}{c||}{Experiment Sim2-C}  & \multicolumn{3}{c|}{Experiment Sim2-D}  \\
	\hline
	$k$ & $x_1^k$ & $x_2^k$   &  	$k$ & $x_1^k$ & $x_2^k$  &  $k$ & $x_1^k$ & $x_2^k$   &  $k$ & $x_1^k$ & $x_2^k$ \\
    \hline
	0 & 50.0000 & 50.0000   & 	0 & 50.0000 & -50.0000  &  0 & 95.0000 & 95.0000  & 0 & 95.0000 & -95.0000 \\
	\hline
	1 & 45.0482 & 54.9488  & 	1 & 45.0482 & -54.9488  &  1 & 85.5941 & 100.0000  & 1 & 85.5941 & -100.0000 \\
	\hline
	5 & 22.3090 & 68.7785  & 	5 & 22.3090 & -68.7785   &  5 & 46.1649 & 100.0000   & 5 & 46.1649 & -100.0000 \\
	\hline
	10 & 5.9728 & 72.4900  & 	10 & 5.9728 & -72.4900  &  10 & 13.7420 & 100.0000   & 10 & 13.7420 & -100.0000 \\
	\hline
	25 & 0.0845 & 72.7700  & 	25 & 0.0845 & -72.7700  &  25 & 0.1972 & 100.0000  & 25 & 0.1972 & -100.0000 \\
	\hline
	52 & 0.0000 & 72.7701  & 	52 & 0.0000& -72.7701  & 55 & 0.0000& 100.0000  & 55 & 0.0000& -100.0000 \\
	\hline
\end{tabular}
}
\end{table}

\begin{figure}[htbp]
\centering
\subfigure[]{
\includegraphics[width=0.44\textwidth]{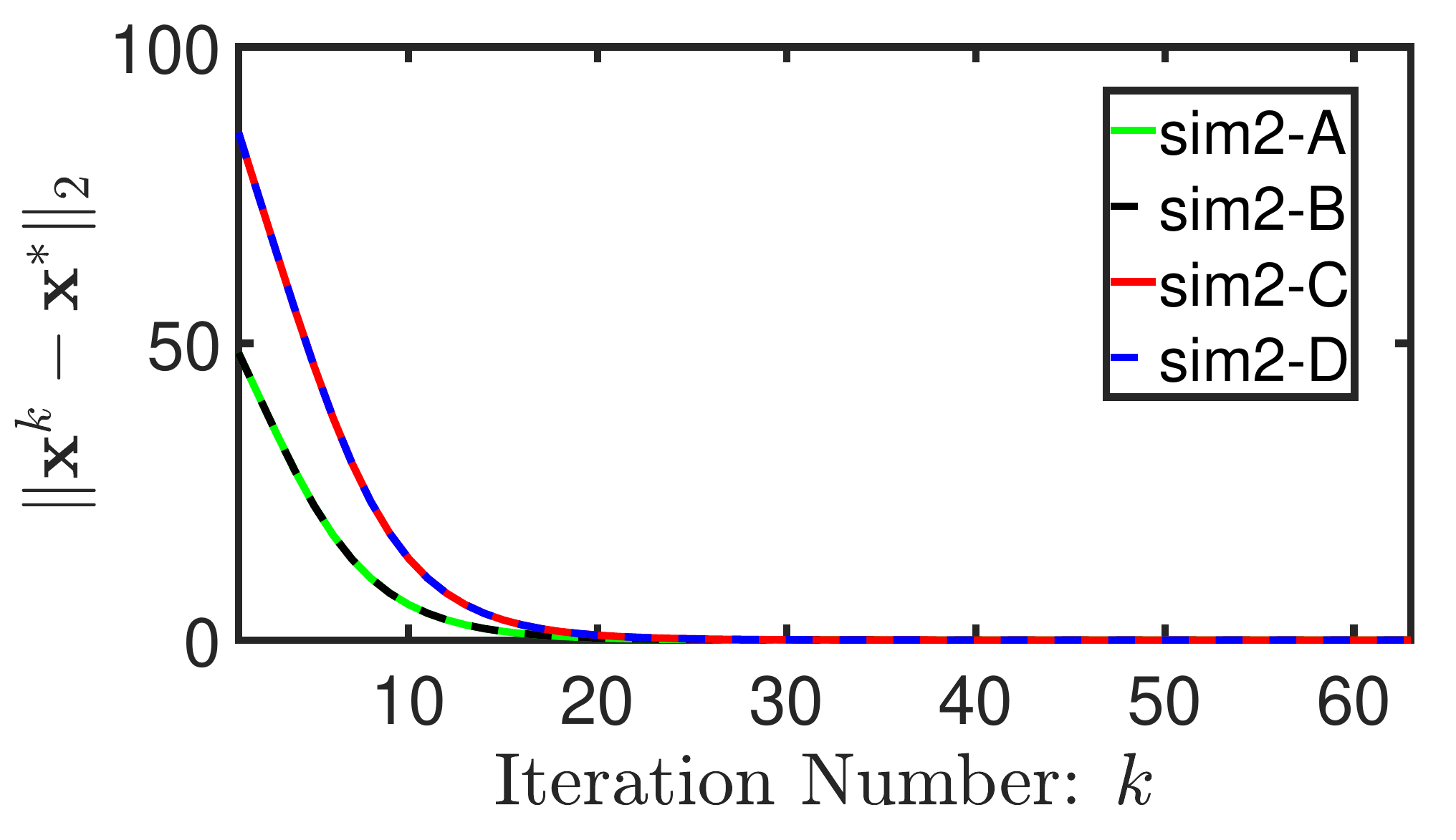}}
\subfigure[]{
\includegraphics[width=0.44\textwidth]{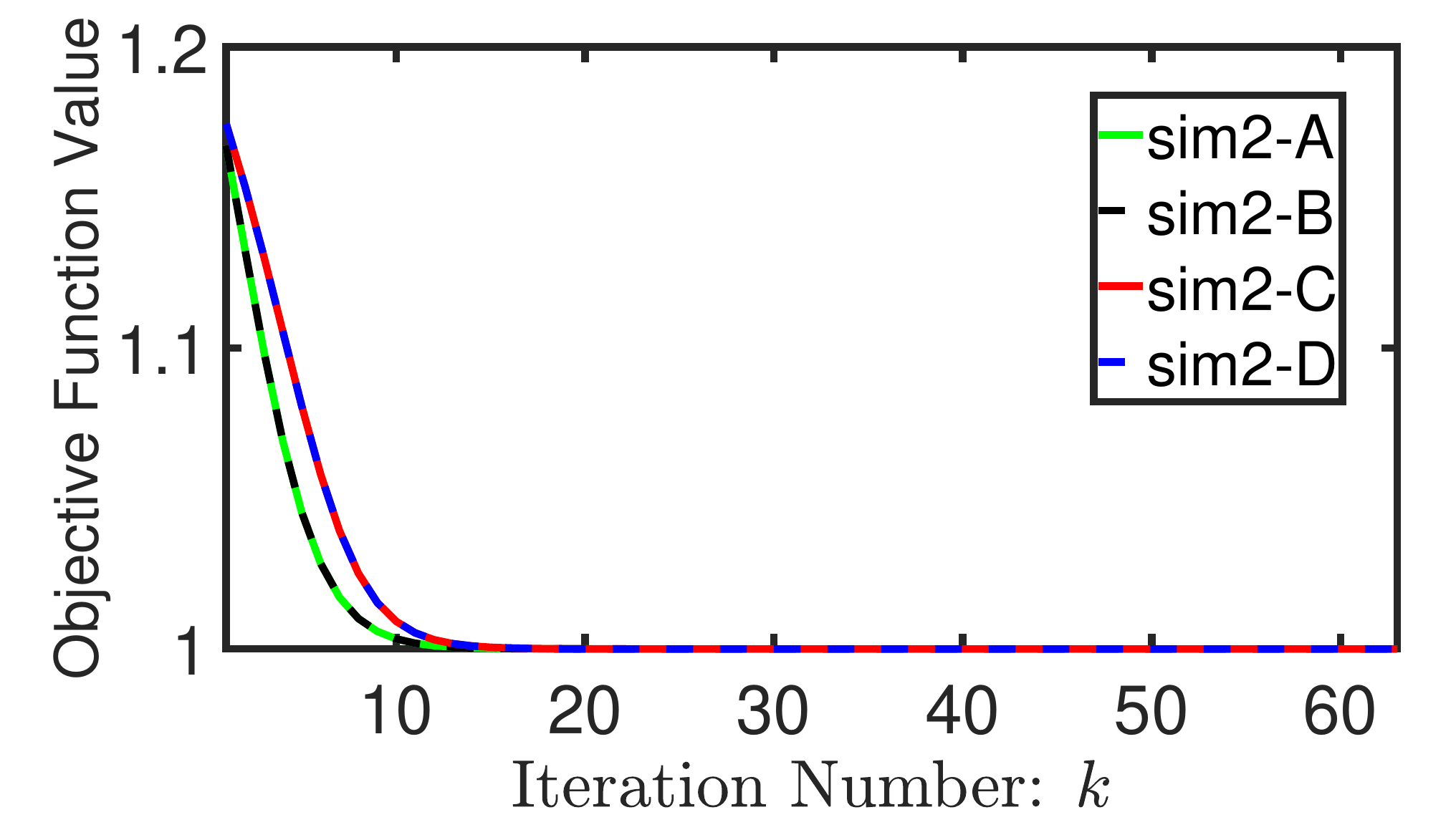}}\\
\subfigure[]{
\includegraphics[width=0.45\textwidth]{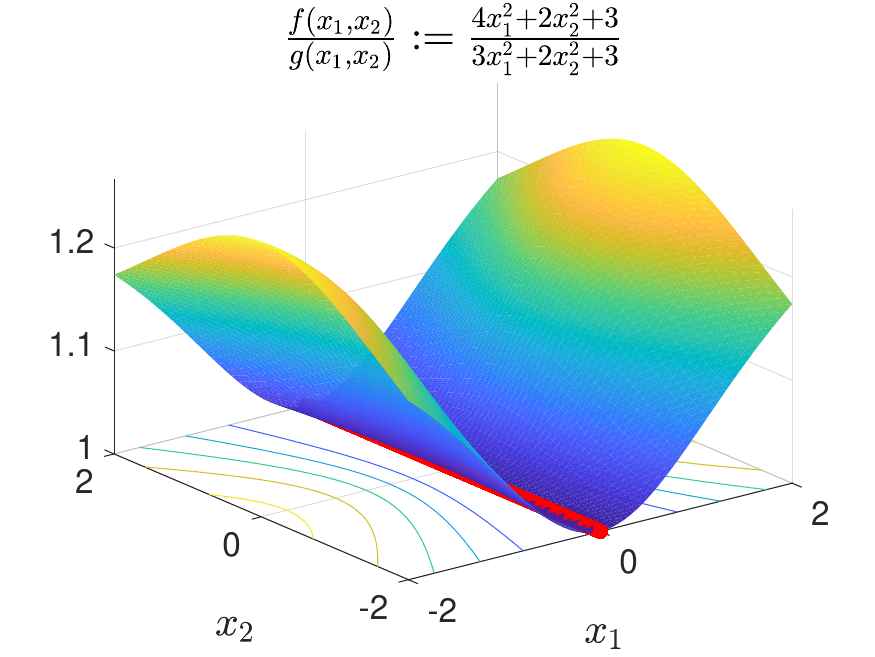}}
\caption{Experimental results of Sim2-A, Sim2-B, Sim2-C, and Sim2-D. (a) confirms that Sim2-PGA converges to a critical point, which is also a global minimizer of the objective function. (b) shows that Sim2-PGA monotonically reduces the objective function to its minimum. (c) presents the graph of the objective function, with its global minimizes indicated by a red line.}
\label{fig_toyexample2}
\end{figure}

\subsection{Real-world data experiments}\label{sec:realexperiment}
In addition to the aforementioned experiments of two simple examples, we also use five real-world monthly benchmark datasets: FRENCH32, FF25EU, FF38, FF49, and FF100MEINV to compare various methods. These datasets are collected from the baseline and commonly used Kenneth R. French's Real-world Data Library\footnote{\url{http://mba.tuck.dartmouth.edu/pages/faculty/ken.french/data_library.html}}. FRENCH32 was constructed by \cite{SPOLC}. FF25EU comprises 25 portfolios formed by ME and prior returns in the European market. FF38 and FF49 consist of 38 and 49 U.S. industry portfolios, respectively. FF100MEINV includes 100 portfolios shaped by ME and investment in the U.S. market. The details of these datasets are presented in Table \ref{tab:infodataset}.

\begin{table}[htbp]
\footnotesize
\centering
\caption{Information of 5 real-world monthly benchmark datasets.}
\label{tab:infodataset}
\begin{tabular}{|c|c|c|c|c|}
\hline
Data Set & Region & Time & Months & Assets\\
\hline
FRENCH32 &US & $Jul/1990\sim Feb/2020$  & 356 &  32\\
FF25EU &EU & $Nov/1990 \sim Oct/2021$ & 372 & 25\\
FF38 &US   & $Jul/1971 \sim Oct/2021$ & 604 & 38\\
FF49 &US   & $Jul/1971 \sim Oct/2021$ & 604 & 49\\
FF100MEINV &US & $Jul/1971 \sim Oct/2021$ & 604 & 100\\
\hline
\end{tabular}
\end{table}

We consider two baseline strategies: the equally-weighted $1/N$ \cite{1Nstrategy} and the Market \cite{olpsjmlr}, along with seven state-of-the-art strategies: Parametric Linear Complementarity Technique (PLCT, \cite{SRlincom}), Sparse and Stable Markowitz Portfolios (SSMP, \cite{sparsepo}), Short-term Sparse Portfolio Optimization (SSPO, \cite{SSPO}), Short-term Portfolio Optimization for Loss Control (SPOLC, \cite{SPOLC}), and Short-term Sparse Portfolio Optimization with $\ell^0$ Regularization (S1, S2 and S3, \cite{SSPOl0}), as competitors in our experiments. The Market and $1/N$ strategies both start with an equally-weighted portfolio; however, the Market remains unchanged while the $1/N$ strategy rebalances to the equally-weighted portfolio at each trading time interval. S1, S2, and S3 are variants of SSPO-$\ell^0$, with S1 being deterministic and S2 and S3 being randomized. The experimental results demonstrate that our proposed SRM-PGA consistently delivers the best performance in most test cases.

We utilize a moving-window trading framework \cite{egrmvgap} for real-world portfolio optimization. Specifically, we use the asset returns $\{\br^{(t)}\}_{t=1}^T$ or the price relatives $\{\mathbf{x}^{(t)}:=\br^{(t)}+\mathbf{1}_N\}_{t=1}^T$ within the time window $t=[1:T]$ to optimize the portfolio $\hat{\bw}^{(T+1)}$ for the subsequent trading period. On the $(T+1)$-th occasion, the portfolio return is calculated by $\hat{r}_{\hat{\bw}}^{(T+1)}=\mathbf{x}^{(T+1)}\cdot \hat{\bw}^{(T+1)}-1$. In the next cycle, the time window shifts to $t=[2:(T+1)]$, and the strategy optimizes the new portfolio $\hat{\bw}^{(T+2)}$. This process continues until the final trading period $\mathscr{T}$, culminating the complete return sequence $\{\hat{r}_{\hat{\bw}}^{(t)}\}_{t=1}^{\mathscr{T}}$. This data can be used for various investment performance evaluation metrics. Initially, when there are insufficient samples to implement a strategy, we default to the equally-weighted portfolio. SSPO, SPOLC, S1, S2, and S3 are short-term strategies that employ a window size of $T=5$. As all other strategies are long-term and do not specify (or involve) $T$, we set $T=20$ for them.

\subsubsection{Results for Sharpe ratios}\label{sec:realexperimentsharpe}
We calculate the backtest SR of a strategy as follows:
\begin{equation}\label{eq:backtestSR}
\widehat{SR}=\frac{\frac{1}{\mathscr{T}}\sum_{t=1}^{\mathscr{T}}\hat{r}_{\hat{\bw}}^{(t)}-r_f}{\sqrt{\frac{1}{\mathscr{T}-1}\sum_{s=1}^{\mathscr{T}}\left(\hat{r}_{\hat{\bw}}^{(s)}-\frac{1}{\mathscr{T}}\sum_{t=1}^{\mathscr{T}}\hat{r}_{\hat{\bw}}^{(t)}\right)^2}},
\end{equation}
where we set $r_f=0$ for the risk-free asset. Table \ref{tab:realresult} displays the (monthly) SRs of the nine strategies compared. The two baseline strategies, $1/N$ and Market, spread risk across all assets, which acts as a natural risk control scheme \cite{1Nstrategy}. Consequently, they significantly outperform most competitors in majority of cases and are comparable to the SR optimizer PLCT. Nonetheless, SRM-PGA attains the highest SRs among all the competitors, including the two baseline strategies, across all datasets. For instance, SRM-PGA's SR is $0.06$ higher than the two baseline strategies on FRENCH32 and $0.02$ higher than PLCT on FF49. Therefore, SRM-PGA not only achieves the optimal solutions in the simple-example experiments but also excels in maximizing the backtest SR with real-world data.

\begin{table}[htbp]
\setlength{\tabcolsep}{1.5mm}
\footnotesize
\centering
\caption{Sharpe ratios of different strategies in real-world data experiments.}
\label{tab:realresult}
\begin{tabular}{|c|c|c|c|c|c|}
	\hline
	Method & FRENCH32 &  FF25EU & FF38 & FF49 & FF100MEINV \\
	\hline
	1/N & 0.2033 & 0.1762 & 0.2205 & 0.2158 & 0.2270 \\
	\hline
	Market & 0.2019 & 0.2458 & 0.2227 & 0.2190 & 0.2351 \\
	\hline
	SPOLC & 0.1215 &  0.0540 & 0.1029 & 0.0948 & 0.1411 \\
	\hline
	SSPO & 0.0685 &  0.0720 & 0.0624 & 0.0506 & 0.1439 \\
	\hline
	S1 & 0.0703 &  0.0830 & 0.0651 & 0.0511 & 0.1507 \\
	\hline
	S2 & 0.0791 &  0.0976 & 0.0653 & 0.0430 & 0.1182 \\
	\hline
	S3 & 0.0956 &  0.1032 & 0.0636 & 0.0547 & 0.1329 \\
	\hline
	SSMP & 0.1445 &  0.1341 & 0.1853 & 0.1778 & 0.1768 \\
	\hline
	PLCT & 0.2539 &  0.2489 & 0.2430 & 0.2371 & 0.2632 \\
	\hline
	\textbf{SRM-PGA} & \textbf{0.2612} &  \textbf{0.2587} & \textbf{0.2609} & \textbf{0.2583} & \textbf{0.2685} \\
	\hline
\end{tabular}
\end{table}

\subsubsection{Results for cumulative wealth}\label{sec:realexperimentcw}
In addition to the backtest SR, portfolio managers are also concerned with the final gain of an investment strategy in actual investment scenarios. By setting the initial wealth for an investment strategy at $W^{(0)}=1$, the manager can compute the final cumulative wealth as $W^{(\mathscr{T})}=\prod_{t=1}^{\mathscr{T}}(\hat{r}_{\hat{\bw}}^{(t)}+1)$. Table \ref{tab:realresultcw} presents the final cumulative wealth results for the nine strategies compared. Generally, the $1/N$, Market, and PLCT strategies demonstrate solid performance across all datasets. However, SRM-PGA achieves the highest final cumulative wealth on five out of six datasets. It narrowly trails PLCT on FRENCH32 but significantly exceeds PLCT on FF38, FF49, and FF100MEINV. For instance, SRM-PGA is approximately $18.47\%$ higher than the second-best competitor, PLCT, on FF100MEINV. Thus, SRM-PGA also proves to be an effective strategy for real-world investment.

\begin{table}[htbp]
\setlength{\tabcolsep}{0.6mm}
\footnotesize
\centering
\caption{Final cumulative wealth of different strategies in real-world data experiments.}
\label{tab:realresultcw}
\begin{tabular}{|c|c|c|c|c|c|c|}
	\hline
	Method & FRENCH32 &  FF25EU & FF38 & FF49 & FF100MEINV \\ \hline
	1/N & 18.32 &  15.77 & 284.43 & 257.93 & 499.29 \\ \hline
	market & 19.34 &  50.01 & 228.52 & 214.15 & 628.43 \\ \hline
	SPOLC & 6.57 &  1.65 & 21.73 & 13.72 & 62.61 \\ \hline
	SSPO & 2.19 &  2.57 & 1.77 & 0.80 & 122.03 \\ \hline
	S1 & 2.29 &  3.32 & 2.08 & 0.84 & 152.24 \\ \hline
	S2 & 3.06 &  5.06 & 2.91 & 0.60 & 48.12 \\ \hline
	S3 & 4.21 &  5.28 & 2.14 & 1.27 & 62.64 \\ \hline
	SSMP & 11.78 & 8.47 & 192.44 & 167.23 & 513.81 \\ \hline
	PLCT & \textbf{52.62} &  71.50 & 978.48 & 797.68 & 911.32 \\ \hline
	\textbf{SRM-PGA} & 50.15 &  \textbf{73.61} & \textbf{1061.66} & \textbf{877.09} & \textbf{1079.65}\\ \hline
\end{tabular}
\end{table}

\section{Conclusion}\label{sec:conclusion}
In this work, we derive globally optimal solutions for a class of fractional optimization problems, predicated on certain assumptions regarding the objective function and constraint set. We employ the proximal gradient technique to solve the fractional optimization model and establish the convergence of the solution algorithm to a critical point of the objective function. Assuming the numerator of the objective function is nonpositive at this critical point, then the critical point becomes a globally optimal solution. We successfully apply these significant results to the problem of maximizing the Sharpe ratio.

Experiments using simple-example data with known optima reveal that our method consistently attains these known optima and reduces the objective function across iterations. Additionally, experiments with real-world financial datasets demonstrate that our method consistently achieves the highest backtested Sharpe ratios and the highest final cumulative wealths in most scenarios, compared to two baseline strategies and seven advanced competitors. In conclusion, our proposed method shows promise in fractional optimization and merits further exploration.

\section*{Data availability statement}
The datasets analysed during the current study are available in Fama and French's Data Library,
\url{https://mba.tuck.dartmouth.edu/pages/faculty/ken.french/data_library.html}


%
%


\bibliographystyle{spmpsci}
\bibliography{bibfile}

\end{document}